\numberwithin{equation}{section}
\def\bbb {\mathbf{b}}
\def\bh {\mathbf{h}}
\def\bK {\mathbf{K}}
\def\bN {\mathbf{N}}
\def\bQ {\mathbf{Q}}
\def\bR {\mathbf{R}}
\def\bS {\mathbf{S}}
\def\bT {\mathbf{T}}
\def\bZ {\mathbf{Z}}
\def\cA {\mathcal{A}}
\def\cF {\mathcal{F}}
\def\cL {\mathcal{L}}
\def\cQ {\mathcal{Q}}
\def\cR {\mathcal{R}}
\def\cZ {\mathcal{Z}}
\def\a {{\alpha}}
\def\g {{\gamma}}
\def\de {{\delta}}
\def\eps {{\epsilon}}
\def\th {{\theta}}
\def\l {{\lambda}}
\def\si {{\sigma}}
\def\om {{\omega}}
\def\Om {{\Omega}}
\def\d {{\partial}}
\def\grad {{\nabla}}
\def\rstr {{\big |}}
\def\indc {{\bf 1}}
\def\la {\langle}
\def\ra {\rangle}
\def\wto {{\rightharpoonup}}
\newcommand{\Div}{\operatorname{div}}
\newcommand{\Supp}{\operatorname{supp}}
\newcommand{\ba}{\begin{aligned}}
\newcommand{\ea}{\end{aligned}}
\newcommand{\be}{\begin{equation}}
\newcommand{\ee}{\end{equation}}
\newcommand{\lb}{\label}
\newtheorem{Thm}{Theorem}[section]
\newtheorem{Prop}[Thm]{Proposition}
\newtheorem{Cor}[Thm]{Corollary}
\newtheorem{Lem}[Thm]{Lemma}
\begin{document}

\title[Boltzmann-Grad limit for periodic Lorentz gas]{On the Boltzmann-Grad limit \\ for the two dimensional periodic Lorentz gas}

\author[E. Caglioti]{Emanuele Caglioti}
\address[E. C.]%
{Universit\`a di Roma ``La Sapienza"\\
Dipartimento di Matematica\\
p.le Aldo Moro 5\\
00185 Roma, Italia} 

\email{caglioti@mat.uniroma1.it}

\author[F. Golse]{Fran\c cois Golse}
\address[F. G.]%
{Ecole polytechnique\\
Centre de math\'ematiques L. Schwartz\\
F91128 Palaiseau cedex\\
\& Universit\'e P.-et-M. Curie\\
Laboratoire J.-L. Lions, BP 187\\
F75252 Paris cedex 05} 

\email{francois.golse@math.polytechnique.fr}

\keywords{Periodic Lorentz gas, Boltzmann-Grad limit, Kinetic models, Extended phase space, Convergence to equilibrium, 
Continued fractions, Farey fractions, Three-length theorem}

\subjclass[2000]{82C70, 35B27 (82C40, 60K05)}

\begin{abstract}
The two-dimensional, periodic Lorentz gas, is the dynamical system corresponding with the free motion of a point particle in a planar system of 
fixed circular obstacles centered at the vertices of a square lattice in the Euclidian plane. Assuming elastic collisions between the particle and the 
obstacles, this dynamical system is studied in the Boltzmann-Grad limit, assuming that the obstacle radius $r$ and the reciprocal mean free path 
are asymptotically equivalent small quantities, and that the particle's distribution function is slowly varying in the space variable. In this limit, the
periodic Lorentz gas cannot be described by a linear Boltzmann equation (see [F. Golse, Ann. Fac. Sci. Toulouse \textbf{17} (2008), 735--749]), 
but involves an integro-differential equation conjectured in [E. Caglioti, F. Golse, C.R. Acad. Sci. S\'er. I Math. \textbf{346} (2008) 477--482] and 
proved in [J. Marklof, A. Str\"ombergsson, preprint arXiv:0801.0612], set on a phase-space larger than the usual single-particle phase-space. The 
main purpose of the present paper is to study the dynamical properties of this integro-differential equation: identifying its equilibrium states, 
proving a H Theorem and discussing the speed of approach to equilibrium in the long time limit. In the first part of the paper, we derive the explicit 
formula for a transition probability appearing in that equation following the method sketched in [E. Caglioti, F. Golse, loc. cit.].
\end{abstract}

\maketitle

%%%%%%%%%%%%%%%%%%%%%%%%%%%%%%%%%%%%%%%%%%%%%%%%%%%%%%%%%%%%%%%%%%%

\section{The Lorentz gas}
\label{S-LorGas}
%%%%%%%%%%%%%%%%%%%%%%%%%%%%%%%%%%%%%%%%%%%%%%%%%%%%%%%%%%%%%%%%%%%

The Lorentz gas is the dynamical system corresponding with the free motion of a single point particle in a system of fixed spherical 
obstacles, assuming that collisions between the particle and any of the obstacles are elastic. This simple mechanical model was
proposed in 1905 by H.A. Lorentz \cite{Lorentz1905} to describe the motion of electrons in a metal --- see also the work of P. Drude
\cite{Drude1900}

Henceforth, we assume that the space dimension is $2$ and restrict our attention to the case of a periodic system of obstacles. 
Specifically, the obstacles are disks of radius $r$ centered at each point of $\mathbf{Z}^2$. Hence the domain left free for particle 
motion is
\begin{equation}
\label{BillTable}
Z_r=\{x\in\bR^2\,|\,\hbox{dist}(x,\bZ^2)>r\}\,,\qquad\hbox{ where $0<r<\frac12$.}
\end{equation}

Throughout this paper, we assume that the particle moves at speed $1$. Its trajectory starting from $x\in Z_r$ with velocity 
$\om\in\bS^1$ at time $t=0$ is denoted by $t\mapsto(X_r,\Om_r)(t;x,\om)\in\bR^2\times\bS^1$. One has
\begin{equation}
\label{Traj_r}
\left\{
\begin{array}{rl}
\dot{X_r}(t)&=\Om_r(t)\,,
\\
\dot\Om_r(t)&=0\,,
\end{array}
\right.
\hbox{ whenever $X_r(t)\in Z_r$,}
\end{equation}
while
\begin{equation}
\label{Traj_rColl}
\left\{
\begin{array}{rl}
X_r(t+0)&=X_r(t-0)\,,
\\
\Om_r(t+0)&=\mathcal{R}[X_r(t)]\Om_r(t-0)\,,
\end{array}
\right.
\hbox{ whenever $X_r(t)\in\d Z_r$.}
\end{equation}
In the system above, we denote $\dot{}=\frac{d}{dt}$, and $\cR[X_r(t)]$ is the specular reflection on $\d Z_r$ at the point 
$X_r(t)=X_r(t\pm 0)$. 

Next we introduce the Boltzmann-Grad limit. This limit assumes that $r\ll 1$ and that the initial position $x$ and direction $\om$ of 
the particle are jointly distributed in $Z_r\times\bS^1$ under some density of the form $f^{in}(rx,\om)$ --- i.e. slowly varying in $x$. 
Given this initial data, we define
\begin{equation}
\label{Def-f_r}
f_r(t,x,\om):=f^{in}(rX_r(-t/r;x,\om),\Om_r(-t/r;x,\om))\quad\hbox{ whenever $x\in Z_r$.}
\end{equation}
In this paper, we are concerned with the limit of $f_r$ as $r\to 0^+$ in some sense to be explained below. In the 2-dimensional 
setting considered here, this is precisely the Boltzmann-Grad limit.

In the case of a random (Poisson), instead of periodic, configuration of obstacles, Gallavotti \cite{Gallavotti} proved that the expectation 
of $f_r$ converges to the solution of the Lorentz kinetic equation 
\begin{equation}
\left\{
\label{LorentzKinEq}
\ba
{}&(\d_t\!+\!\om\!\cdot\!\grad_x)f(t,x,\om)
	=\!\int_{\bS^1}(f(t,x,\om\!-\!2(\om\cdot n)n)\!-\!f(t,x,\om))(\om\cdot n)_+dn\,,
\\
&f\Big|_{t=0}=f^{in}\,,
\ea
\right.
\end{equation}
for all $t>0$ and $(x,\om)\in\bR^2\times\bS^1$. Gallavotti's remarkable result was later generalized and improved in \cite{Spohn1978,BoldriBuniSinai1983}.

In the case of a periodic distribution of obstacles, the Boltzmann-Grad limit of the Lorentz gas cannot be described by the Lorentz 
kinetic equation (\ref{LorentzKinEq}). Nor can it be described by any linear Boltzmann equation with regular scattering kernel: 
see \cite{Golse2003,Golse2008} for a proof of this fact, based on estimates on the distribution of free path lengths to be found  in \cite{BourgainGolseWennberg} and \cite{GolseWennberg}. 

In a recent note \cite{CagliotiFG2008}, we have proposed a kinetic equation for the limit of $f_r$ as $r\to 0^+$. The striking new 
feature in our theory for the Boltzmann-Grad limit of the periodic Lorentz gas is an extended single-particle phase-space (see also
\cite{Golse2007}) where the limiting equation is posed. 

Shortly after our announcements \cite{Golse2007,CagliotiFG2008}, J. Marklof and A. Str\"ombergsson independently arrived at the same limiting 
equation for the Boltzmann-Grad limit of the periodic Lorentz gas as in \cite{CagliotiFG2008}. Their contribution \cite {MarkloStrom2008} provides
a complete rigorous derivation of that equation (thereby  confirming an hypothesis left unverified in \cite{CagliotiFG2008}), as well as an extension 
of that result to the case of any space dimension higher than 2.

The present paper provides first a complete proof of the main result in our note \cite{CagliotiFG2008}. In fact the method sketched in  
our announcement \cite{CagliotiFG2008} is different from the one used in \cite{MarkloStrom3}, and could perhaps be useful for future 
investigations on the periodic Lorentz gas in $2$ space dimensions.

Moreover, we establish some fundamental qualitative features of the equation governing the Boltzmann-Grad limit of the periodic 
Lorentz gas in $2$ space dimensions --- including an analogue of the classical Boltzmann H Theorem, a description of the equilibrium 
states, and of the long time limit for that limit equation.

We have split the presentation of our main results in the two following sections. Section 2 introduces our kinetic theory in an extended 
phase space for the Boltzmann-Grad limit of the periodic Lorentz gas in space dimension $2$. Section 3 is devoted to the fundamental 
dynamical properties of the integro-differential equation describing this Boltzmann-Grad limit --- specifically, we present an analogue
of Boltzmann's H Theorem, describe the class of equilibrium distribution functions, and investigate the long time limit of the distribution
functions in extended phase space that are solutions of that integro-differential equation.

%%%%%%%%%%%%%%%%%%%%%%%%%%%%%%%%%%%%%%%%%%%%%%%%%%%%%%%%%%%%%%%%%%%

\section{Main Results I: The Boltzmann-Grad Limit}
\label{S-BGLimit}

%%%%%%%%%%%%%%%%%%%%%%%%%%%%%%%%%%%%%%%%%%%%%%%%%%%%%%%%%%%%%%%%%%%

Let $(x,\om)\in Z_r\times\bS^1$, and define $0<t_0<t_1<\ldots$ to be the sequence of collision times on the billiard trajectory in $Z_r$ 
starting from $x$ with velocity $\om$. In other words,
\begin{equation}
\label{Def-tn}
\{t_j\,|\,j\in\bN\}=\{t\in\bR_+^*\,|\,X_r(t;x,\om)\in\d Z_r\}\,.
\end{equation}
Define further
\begin{equation}
\label{Def-xn-omn}
(x_j,\om_j):=(X_r(t_j+0;x,\om),\Om_r(t_j+0;x,\om))\,,\quad j\ge 0\,.
\end{equation}

Denote by $n_x$ the inward unit normal to $Z_r$ at the point $x\in\d Z_r$, and consider 
\begin{equation}
\ba
\Gamma_r^\pm
	&=\{(x,\om)\in\d Z_r\times\bS^1\,|\,\pm\om\cdot n_x>0\}\,,
\\
\Gamma_r^0
	&=\{(x,\om)\in\d Z_r\times\bS^1\,|\,\om\cdot n_x=0\}\,.
\ea
\end{equation}
Obviously, $(x_j,\om_j)\in\Gamma^+_r\cup\Gamma^0_r$ for each $j\ge 0$.

\subsection{The Transfer Map}
\label{SS-TMap}
%%%%%%%%%%%%%%%%%%%%%%%%%%%%%%%%%%%%%%%%%%%%%%%%%%%%%%%%%%%%%%%%%%%

As a first step in finding the Boltzmann-Grad limit of the periodic Lorentz gas, we seek a mapping from $\Gamma^+_r\cup\Gamma^0_r$ 
to itself whose iterates transform $(x_0,\om_0)$ into the sequence $(x_j,\om_j)$ defined in (\ref{Def-xn-omn}).

For $(x,\om)\in(Z_r\times\bS^1)\cup\Gamma^+_r\cup\Gamma^0_r$, let $\tau_r(x,\om)$ be the exit time defined as
\begin{equation}
\label{Def-tau}
\tau_r(x,\om)=\inf\{t>0\,|\,x+t\om\in\d Z_r\}\,.
\end{equation}
Also, for $(x,\om)\in\Gamma^+_r\cup\Gamma^0_r$, define the impact parameter $h_r(x,\om)$ as on Figure 1 by
\begin{equation}
\label{Def-h}
h_r(x,\om')=\sin(\widehat{\om',n_x})\,.
\end{equation}

%%%%%%%%%%%%%%%%%%%%%%%%%%%%%%%%%%%%%%%%%%%%%%%%%%%%%%%%%%%%%%%%%%%%%%%%

\begin{figure}\lb{F-Fig1}

\begin{center}
\includegraphics[width=8.0cm]{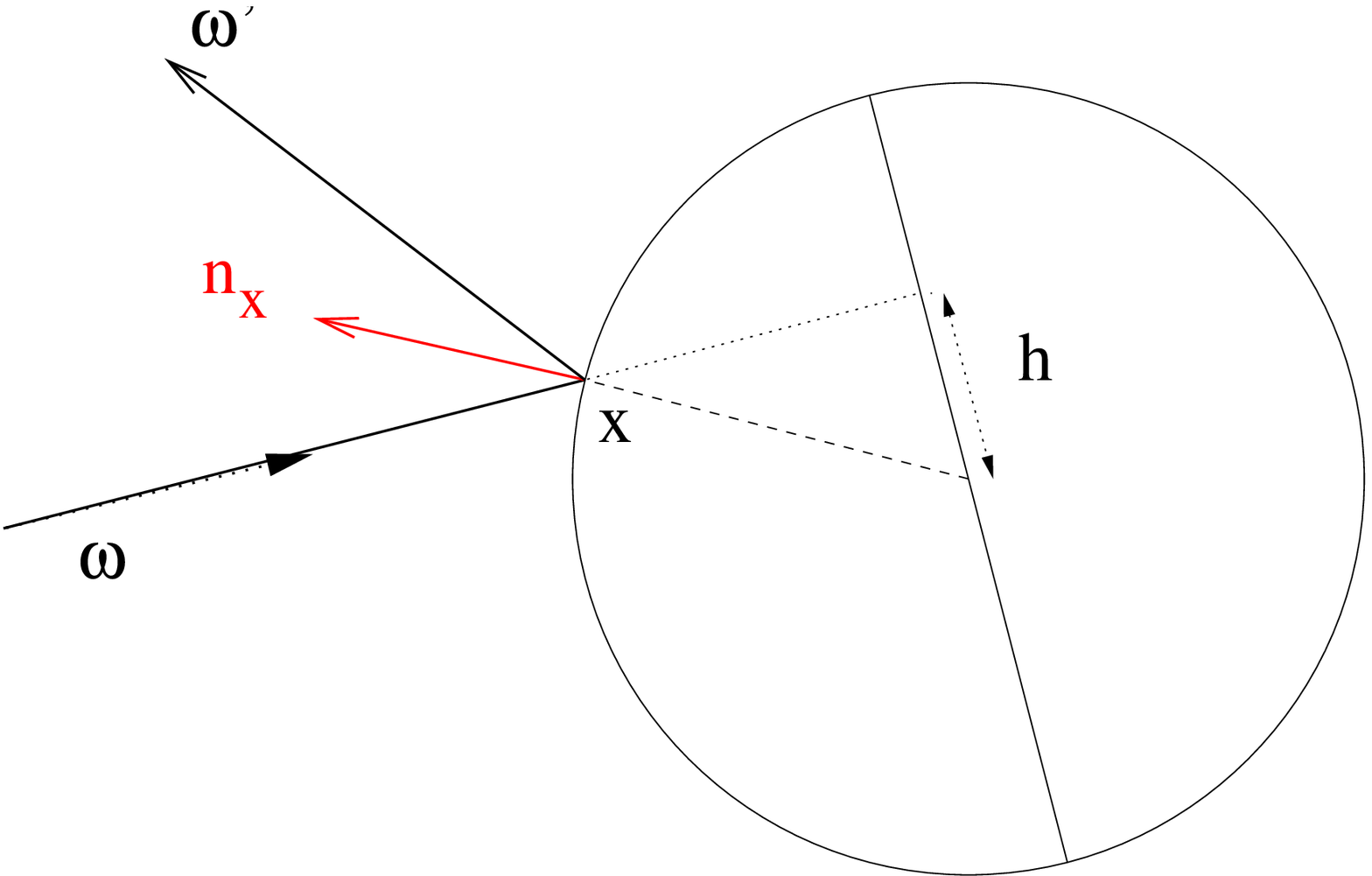}
\end{center}

\caption{The impact parameter $h$ corresponding with a collision with incoming direction $\om$ or equivalently with outgoing 
direction $\om'$}

\end{figure}

%%%%%%%%%%%%%%%%%%%%%%%%%%%%%%%%%%%%%%%%%%%%%%%%%%%%%%%%%%%%%%%%%%%%%%%%

Denote by $(\Gamma^{+}_r\cup\Gamma^0_r)/\bZ^2$ the quotient of $\Gamma^{+}_r\cup\Gamma^0_r$ under the action of $\bZ^2$ 
by translations on the $x$ variable. Obviously, the map 
\begin{equation}
\label{Def-Y}
(\Gamma^{+}_r\cup\Gamma^0_r)/\mathbf{Z}^2\ni(x,\om)\mapsto(h_r(x,\om),\om)\in[-1,1]\times\bS^1
\end{equation} 
coordinatizes $(\Gamma^{+}_r\cup\Gamma^0_r)/\mathbf{Z}^2$, and we henceforth denote by $Y_r$ its inverse. For $r\in]0,\frac12[$, 
we define the transfer map (see Figure 2)
$$
T_r:\,[-1,1]\times\bS^1\to\bR_+^*\times[-1,1]
$$ 
by
\begin{equation}
\label{TransferMap}
T_r(h',\om)=(2r\tau_r(Y_r(h',\om)),h_r((X_r,\Om_r)(\tau_r(Y_r(h',\om))\pm 0;Y_r(h',\om))))\,.
\end{equation}

%%%%%%%%%%%%%%%%%%%%%%%%%%%%%%%%%%%%%%%%%%%%%%%%%%%%%%%%%%%%%%%%%%%%%%%%

\begin{figure}\lb{F-Fig2}

\begin{center}
\includegraphics[width=12.0cm]{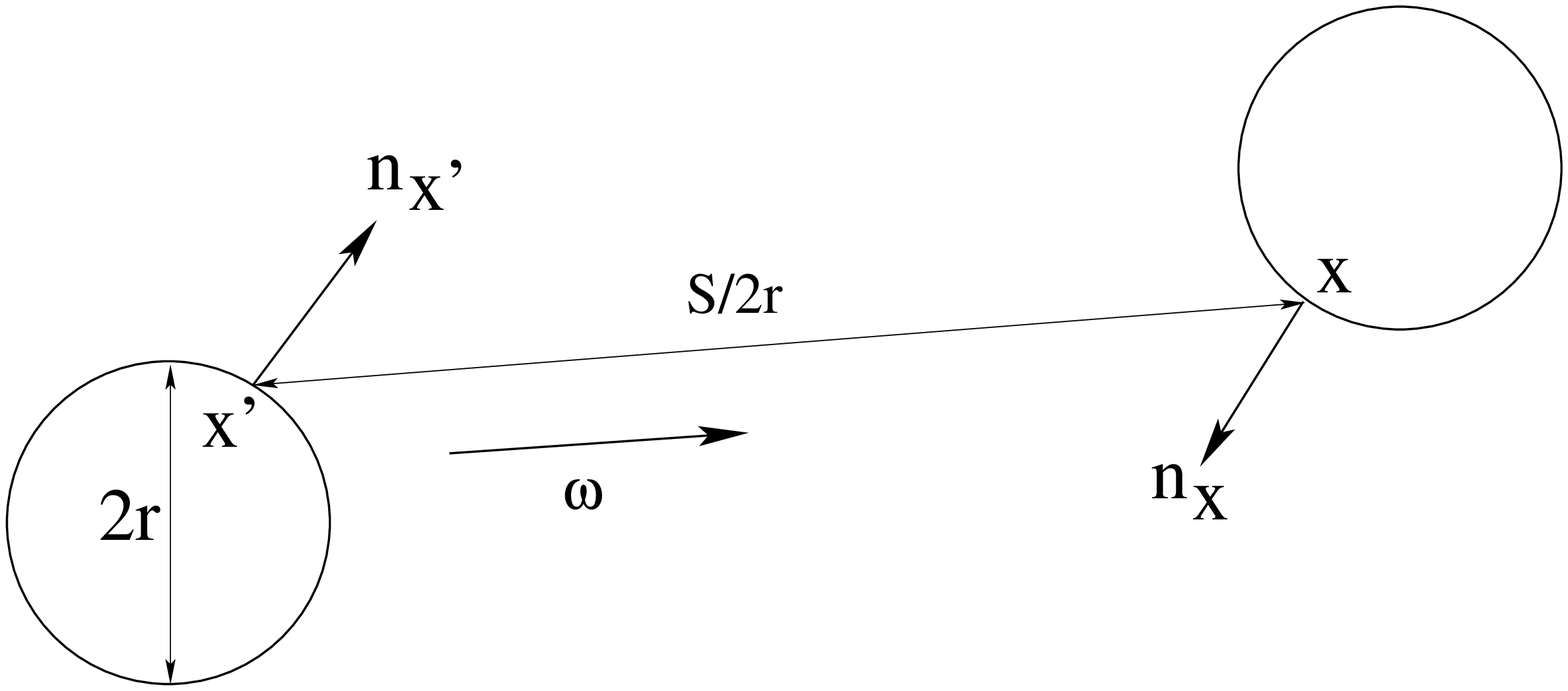}
\end{center}

\caption{The transfer map}

\end{figure}

%%%%%%%%%%%%%%%%%%%%%%%%%%%%%%%%%%%%%%%%%%%%%%%%%%%%%%%%%%%%%%%%%%%%%%%%

Up to translations by a vector of $\bZ^2$, the transfer map $T_r$ is essentially the sought transformation, since one has
\begin{equation}
\label{Tr-xnomn}
T_r(h_r(x_j,\om_j),\om_j)=(2r\tau_r(x_j,\om_j),h_r(x_{j+1},\om_j))\,,\quad\hbox{ for each }j\ge 0\,,
\end{equation} 
and
\begin{equation}
\label{omn-n+1}
\omega_{j+1}=R[\pi-2\arcsin(h_r(x_{j+1},\om_j))]\om_j\,,\quad\hbox{ for each }j\ge 0\,.
\end{equation} 
The notation 
\begin{equation}
\label{Not-Rot}
R[\th]\hbox{ designates the rotation of an angle $\th$.}
\end{equation}
Notice that, by definition, 
$$
h_r(x_{j+1},\om_j)=h_r(x_{j+1},\om_{j+1})\,.
$$

The theorem below giving the limiting behavior of the map $T_r$ as $r\to 0^+$ was announced in \cite{CagliotiFG2008}.

\begin{Thm}
\label{T-LimTr}
For each $\Phi\in C_c(\bR_+^*\times[-1,1])$ and each $h'\in[-1,1]$
\begin{equation}
\label{LimTransitProba}
\frac1{|\ln\eps|}\int_\eps^{1/4}\Phi(T_r(h',\om))\frac{dr}{r}\to\int_0^\infty\int_{-1}^1\Phi(S,h)P(S,h|h')dSdh
\end{equation}
a.e. in $\om\in\bS^1$ as $\eps\to 0^+$, where the transition probability $P(S,h|h')dSdh$ is given by the formula
\begin{equation}
\ba
\label{FlaTransitProba}
P(S,h|h')=
\frac{3}{\pi^2 S\eta}\Bigl((S\eta)&\wedge(1\!-\!S)_+\!+\!\left(\eta S\!-\!|1\!-\!S|\right)_+
\\
&+\left((S-\tfrac12S\eta)\wedge(1+\tfrac12S\eta)-(\tfrac12S+\tfrac12S\zeta)\vee 1\right)_+
\\
&+\left((S-\tfrac12S\eta)\wedge 1-(\tfrac12S+\tfrac12S\zeta)\vee(1-\tfrac12 S\eta)\right)_+\Bigr)\,,
\ea
\end{equation}
with the notation
$$
\zeta=\tfrac12|h+h'|\,,\qquad\eta=\tfrac12|h-h'|\,,
$$
and 
$$
a\wedge b=\inf(a,b)\,,\qquad a\vee b=\sup(a,b)\,.
$$
Equivalently, for each $(S,h,h')\in\bR_+\times]-1,1[\times]-1,1[$ such that $|h'|\le h$, 
\begin{equation}
\label{FlaTransitProbaSimpl}
P(S,h|h')=\tfrac{3}{\pi^2}\left(1\wedge\frac{1}{h-h'}\left(\frac{2}{S}-(1+h')\right)_+\right)\,.
\end{equation}
\end{Thm}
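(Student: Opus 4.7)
The plan is to reduce the study of $T_r(h',\om)$ to a question about the straight-line flow on the flat torus $\bR^2/\bZ^2$ with a single circular obstacle of radius $r$ centered at the origin, and then to enumerate the possible geometries of two successive collisions by means of the continued-fraction expansion of the slope of $\om$ together with the Steinhaus three-distance theorem.

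First I would unfold the trajectory. By translating by a vector of $\bZ^2$, I may assume the collision at ``time $0$'' occurs on the boundary of the disk around the origin at the point $Y_r(h',\om)$. The next collision happens at the first lattice point $Q\in\bZ^2\setminus\{0\}$ met by the tube $\{Y_r(h',\om)+t\om+s\om^\perp:t>0,\,|s|<r\}$. To leading order in $r$ this amounts to selecting the smallest $Q\cdot\om>0$ subject to the transversal constraint $|Q\cdot\om^\perp-rh'|<2r$. In adapted coordinates, the transversal components of the lattice points become the orbit of an irrational rotation of angle $\a$ depending only on $\th=\arg\om$, and the Steinhaus three-distance theorem yields, at each finite scale, at most three candidate ``next'' lattice points, each parametrized explicitly by the continued-fraction convergents $p_n/q_n$ of $\a$.

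Next I would partition the $r$-axis into the countable family of intervals on which the combinatorial type of the pair (incoming, outgoing) collision is constant. On each such interval, both $2r\tau_r(Y_r(h',\om))$ and the outgoing impact parameter $h_r$ are explicit piecewise-affine functions of $r$ and $h'$, so the image $T_r(h',\om)$ traces an arc in $\bR_+^*\times[-1,1]$ indexed by a combinatorial type. Substituting into the logarithmic average $\tfrac1{|\ln\eps|}\int_\eps^{1/4}\Phi(T_r(h',\om))\,\frac{dr}{r}$ and rescaling $r$ by $q_n$ on each interval reduces the problem to evaluating a series whose terms are averages of $\Phi$ over fundamental domains associated with consecutive convergents of $\a$. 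The ergodicity of the Gauss map, which is valid for a.e.\ direction $\om$, then guarantees that these rescaled ``phases'' equidistribute as $n\to\infty$, producing in the limit an integral of $\Phi$ against a universal density $P(S,h|h')$ independent of $\om$, which should match the expression (\ref{FlaTransitProba}).

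The main obstacle lies in the bookkeeping attached to the Steinhaus three-length decomposition: keeping track of the three possible gap lengths as functions of $(r,h',\th)$, ruling out a null set of $\th$ (and a logarithmically negligible set of $r$'s where two convergents tie) on which the description degenerates, and finally collapsing the sum of contributions from all combinatorial types into the compact piecewise-linear expression (\ref{FlaTransitProba}). The variables $\zeta=\tfrac12|h+h'|$ and $\eta=\tfrac12|h-h'|$ should emerge naturally in this collapse: $\eta$ measures how close the transversal exit position on the new obstacle is to that on the previous one, while $\zeta$ measures their centroid relative to the lattice direction. Verifying in the half-space $|h'|\le h$ the equivalence with the simpler form (\ref{FlaTransitProbaSimpl}), and then extending by the symmetries $(h,h')\mapsto(-h,-h')$ and $h\leftrightarrow h'$, would provide a useful sanity check on the computation.
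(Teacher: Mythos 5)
Your proposal correctly identifies the overall skeleton — unfolding to a periodic picture, the Steinhaus three-gap theorem to isolate at most three candidate next obstacles, parametrization of the combinatorial types by continued-fraction data $(p_n,q_n)$, and Birkhoff's ergodic theorem for the Gauss map. This is indeed the route the paper takes to establish \emph{existence} of the Ces\`aro limit (Theorem~\ref{T-Ergo} and Theorem~\ref{T-ErgoABQsi}), and the ``three-obstacle configuration'' parametrization $(A,B,Q,\sigma)$ you gesture at is made precise in Lemma~\ref{L-3obst}, Proposition~\ref{P-AsympTr} and Proposition~\ref{P-FlaABQsi}.

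However, there is a genuine gap at the step where you propose to ``collapse the sum of contributions from all combinatorial types into the compact piecewise-linear expression.'' The paper itself remarks that this direct route — computing $L_{m+1}(f_{m,+})$ from its definition~\eqref{Def-Lm} and then passing to the limit $m\to\infty$ — ``would be most impractical.'' The difficulty is structural, not merely computational: the parameter $Q(\om,r)=\eps\,q_{N(\a,\eps)}(\a)$ depends, through the denominator $q_n$, on the \emph{entire} continued-fraction history of $\a$ and not on any finite window of Gauss-map iterates. Consequently the ergodic averages only give an abstract limit $\cL(F)$ after truncating to $m$ iterates and taking $m\to\infty$ in a Cauchy-sequence sense, which does not yield a closed form. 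The paper therefore pivots to a different technique: it reparametrizes each configuration by a triple $(Q,Q',D)$ attached to the pair of Farey fractions bracketing $\a$ at each order, establishes the limiting distribution of $(Q,Q',D)$ by a Kloosterman-sum argument following Boca--Zaharescu (Lemma~\ref{L-DistQQ'D}), pushes this forward to the distribution $\nu$ of $(A,B,Q)$ (Lemma~\ref{L-DistABQ}), identifies $\cL(F)=\int F_+\,d\nu$ by a Fubini argument, and only then carries out the change-of-variables computations $(A,B,Q)\mapsto\mathbf{T}_{A,B,Q,+1}(h')$ in Section~\ref{S-TransitProba} to reach~\eqref{FlaTransitProba}. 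Without this Farey/Kloosterman ingredient — or some substitute that makes the explicit limiting density accessible — the proposal establishes that a limit exists but does not produce the formula. The closing remark that $\eta,\zeta$ ``should emerge naturally'' also glosses over the rather delicate case analysis (the separate contributions $I$, $II$, $III_1$, $III_2$ and their lattice-inequality simplifications) that actually produces the four-term form of~\eqref{FlaTransitProba}.
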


\smallskip
The formula (\ref{FlaTransitProba}) implies the following properties of the function $P$.

\begin{Cor}[Properties of the transition probability $P(S,h|h')$.]
\lb{C-LimTr}
The function $(S,h,h')\mapsto P(S,h|h')$ is piecewise continuous on $\bR_+\times[-1,1]\times[-1,1]$.

\smallskip
\noindent
1) It satisfies the symmetries
\begin{equation}
\label{SymP}
P(S,h|h')=P(S,h'|h)=P(S,-h|-h')\quad\hbox{ for a.e. }h,h'\in[-1,1]\hbox{ and }S\ge 0\,,
\end{equation}
as well as the identities
\begin{equation}
\lb{Int-Pdsdh=1}
\left\{
\ba
{}&\int_0^\infty\int_{-1}^1P(S,h|h')dSdh=1\,,\quad\hbox{ for each }h'\in[Ð1,1]\,,
\\
&\int_0^\infty\int_{-1}^1P(S,h|h')dSdh'=1\,,\quad\hbox{ for each }h\in[Ð1,1]\,.
\ea
\right.
\end{equation}
2) The transition probability $P(S,h|h')$ satisfies the bounds
\begin{equation}
\label{BoundP}
0\le P(S,h|h')\le\frac{6}{\pi^2S}\,\mathbf{1}_{1+h'<\frac2{S}}
\end{equation}
for a.e. $h,h'\in]-1,1[$ such that $|h'|\le h$ and all $S\ge 4$. Moreover, one has
\begin{equation}
\label{BoundIntP}
\int_{-1}^1\int_{-1}^1P(S,h|h')dhdh'\le\frac{48}{\pi^2S^3}\,,\quad S\ge 4\,.
\end{equation}
\end{Cor}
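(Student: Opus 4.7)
The plan is to derive each claim directly from the explicit formula (2.11), or from its simplified form (2.12) valid on $\{|h'|\le h\}$, using the symmetries (2.13) to extend whenever we have first reduced to that region. Piecewise continuity on $\bR_+\times[-1,1]^2$ is immediate since (2.11) is built from polynomial expressions in $(S,\eta,\zeta)$ via the piecewise continuous operations $(\cdot)_+$, $\wedge$, $\vee$, and the only potential singularity at $\eta=0$ is removed because the $\wedge 1$ branch of (2.12) takes over there. The symmetries (2.13) follow from the fact that $\eta=\tfrac12|h-h'|$ and $\zeta=\tfrac12|h+h'|$ are invariant under both $(h,h')\mapsto(h',h)$ and $(h,h')\mapsto(-h,-h')$, while (2.11) depends on $(h,h')$ solely through $\eta$ and $\zeta$.

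For the normalization identities (2.14), the two are equivalent via the symmetry $P(S,h|h')=P(S,h'|h)$, so it suffices to establish the first. On $|h'|\le h$, splitting the $S$-integral in (2.12) at $S=2/(1+h)$ (where the $\wedge$ switches branch) and $S=2/(1+h')$ (beyond which $P$ vanishes) yields
\[
\int_0^\infty P(S,h|h')\,dS=\frac{6}{\pi^2(h-h')}\ln\frac{1+h}{1+h'}.
\]
The symmetries extend this formula to $h+h'\ge 0$, with a mirror formula on $h+h'\le 0$. Integrating in $h$ on $[-1,1]$ for fixed $h'\ge 0$ and substituting $u=(1+h)/(1+h')$ and $v=(1-h)/(1-h')$ on the two pieces reduces the normalization to the dilogarithm identity
\[
\mathrm{Li}_2(1-a)-\mathrm{Li}_2(-a)+\mathrm{Li}_2(1-1/a)-\mathrm{Li}_2(-1/a)=\tfrac{\pi^2}{6},\qquad a=\tfrac{1-h'}{1+h'},
\]
which one verifies by checking that both sides agree at $a=1$ (using $\mathrm{Li}_2(-1)=-\pi^2/12$) and that the $a$-derivative of the left-hand side vanishes identically (the four logarithmic contributions collapse to $\ln(a)\cdot\bigl[\tfrac{1}{1-a}-\tfrac{1}{a(1-a)}+\tfrac{1}{a}\bigr]=0$). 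The case $h'<0$ reduces to $h'>0$ by the symmetry $(h,h')\mapsto(-h,-h')$.

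For the bound (2.15), (2.12) gives $P\le \tfrac{3}{\pi^2(h-h')}(2/S-(1+h'))_+$; if $1+h'\ge 2/S$ both sides of (2.15) vanish, and otherwise (2.15) reduces algebraically to $2(1-h+h')/S\le 1+h'$, which for $S\ge 4$ follows from the universal bound $h+h'\ge -1$ on $[-1,1]^2$. For (2.16), the four triangles $\{|h'|\le h\}$, $\{|h|\le h'\}$, $\{|h'|\le -h\}$, $\{|h|\le -h'\}$ tile $[-1,1]^2$ up to measure zero and, by (2.13), contribute equally, so $\iint_{[-1,1]^2}P=4\iint_{|h'|\le h}P$. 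Applying (2.15) and observing that for $S\ge 4$ the constraint $1+h'<2/S$ together with $|h'|\le h\le 1$ forces $h'\in[-1,2/S-1)$ and $h\in[-h',1]$, the elementary area computation $\int_{-1}^{2/S-1}(1+h')\,dh'=2/S^2$ yields $4\cdot\tfrac{6}{\pi^2 S}\cdot\tfrac{2}{S^2}=\tfrac{48}{\pi^2 S^3}$. The main obstacle is the dilogarithm identity closing the normalization step; every other step is routine algebra or elementary integration directly from (2.11)--(2.12).
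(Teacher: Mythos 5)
Your proof is correct, and for the symmetries, piecewise continuity, and the bounds (\ref{BoundP})--(\ref{BoundIntP}) it follows essentially the same route as the paper (for (\ref{BoundP}) the paper observes directly that $1+h'<2/S$ and $S\ge 4$ force $h'<-\tfrac12$, hence $h>\tfrac12$ and $h-h'>1$, whereas you rearrange to $2(1-h+h')/S\le 1+h'$ and invoke $h+h'\ge -1$; these are trivially equivalent manipulations). The genuine divergence is in your treatment of the normalization identities (\ref{Int-Pdsdh=1}). The paper disposes of the first identity in one line: by construction $P(S,h|h')\,dS\,dh$ is the image of the probability measure $\mu$ from Proposition \ref{P-3ObstProba} under $(A,B,Q,\si)\mapsto\bT_{A,B,Q,\si}(h')$, hence is automatically a probability density, and the second identity then follows from the symmetry $P(S,h|h')=P(S,h'|h)$. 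You instead verify the normalization directly from the closed form (\ref{FlaTransitProbaSimpl}): integrating in $S$ to get $\Pi(h|h')=\tfrac{6}{\pi^2(h-h')}\ln\tfrac{1+h}{1+h'}$ on $\{|h'|\le h\}$, extending by the symmetries, and reducing $\int_{-1}^1\Pi(h|h')\,dh=1$ to the dilogarithm identity $\mathrm{Li}_2(1-a)-\mathrm{Li}_2(-a)+\mathrm{Li}_2(1-1/a)-\mathrm{Li}_2(-1/a)=\pi^2/6$ (which you verify by differentiating in $a$ and evaluating at $a=1$ via $\mathrm{Li}_2(-1)=-\pi^2/12$). Your computation is correct, but it is considerably heavier; what it buys is that it checks (\ref{Int-Pdsdh=1}) against the explicit formula (\ref{FlaTransitProba})--(\ref{FlaTransitProbaSimpl}) rather than against the abstract construction, thereby providing an independent consistency check of those formulas. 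What it costs is self-containment in the wrong direction: the paper's argument requires no integration at all, because the measure-theoretic structure already encodes the fact. Note also that, incidentally, your $\Pi$ computation reproduces the paper's stated formula (\ref{FlaTransitProba-hh'}), which the text asserts without proof, so your route supplies a verification of that display as well.
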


As we shall see below, the family $T_r(h',\om)$ is wildly oscillating in both $h'$ and $\om$ as $r\to 0^+$, so that it is somewhat
natural to expect that $T_r$ converges only in the weakest imaginable sense.

The above result with the explicit formula (\ref{FlaTransitProba}) was announced in \cite{CagliotiFG2008}. At the same time,
V.A. Bykovskii and A.V. Ustinov\footnote{We are grateful to J. Marklof for informing us of their work in August 2009.} arrived 
independently at formula (\ref{FlaTransitProbaSimpl}) in \cite{BykovskiUstinov2009}. That formulas (\ref{FlaTransitProba}) 
and (\ref{FlaTransitProbaSimpl}) are equivalent is proved in section \ref{SS-FlaPSimpl} below.

The existence of the limit (\ref{LimTransitProba}) for the periodic Lorentz gas in any space dimension has been obtained 
by J. Marklof and A. Str\"ombergsson in \cite{MarkloStrom2007}, by a method completely different from the one used in the
work of V.A. Bykovskii and A.V. Ustinov or ours. However, at the time of this writing, their analysis does not seem to lead to 
an explicit formula for $P(s,h|h')$ such as (\ref{FlaTransitProba})-(\ref{FlaTransitProbaSimpl}) in space dimension higher
than $2$.

Notice that J. Marklof and A. Str\"ombergsson as well as V.A. Bykovskii and A.V. Ustinov obtain the limit (\ref{LimTransitProba}) 
in the weak-* $L^\infty$ topology as regards the variable $\om$, without the Ces\`aro average over $r$, whereas our result, 
being based on Birkhoff's ergodic theorem, involves the Ces\`aro average in the obstacle radius, but leads to a pointwise 
limit a.e. in $\om$.

In space dimension $2$, \cite{MarkloStrom3} extends the explicit formula (\ref{FlaTransitProba})-(\ref{FlaTransitProbaSimpl})
to the case of interactions more general than hard-sphere collisions given in terms of their scattering map. The explicit 
formula proposed by Marklof-Str\"ombergsson in  \cite{MarkloStrom3} for the transition probability follows from their formula 
(4.14) in  \cite{MarkloStrom2007}, and was obtained independently from our result in \cite{CagliotiFG2008}.

\smallskip
Another object of potential interest when considering the Boltzmann-Grad limit for the 2-dimensional periodic Lorentz gas is the
probability of transition on impact parameters corresponding with successive collisions, which is essentially the Boltzmann-Grad
limit of the billiard map in the sense of Young measures. Obviously, the probability of observing an impact parameter in some
infinitesimal interval $dh$ around $h$ for a particle whose previous collision occured with an impact parameter $h'$ is 
$$
\Pi(h|h')=\int_0^\infty P(S,h|h')dS\,.
$$

\newpage%\smallskip
\noindent
\fbox{\sc Explicit formula for $\Pi(h|h')$}

\smallskip
For $|h'|<h<1$, one has
\be
\lb{FlaTransitProba-hh'}
\Pi(h|h')=\tfrac{6}{\pi^2}\frac1{h-h'}\ln\frac{1+h}{1+h'}
\ee
Besides, the transition probability $\Pi(h|h')$ satisfies the symmetries inherited from $P(S,h|h')$:
\be
\label{SymPi}
\Pi(h|h')=\Pi(h'|h)=\Pi(-h|-h')\quad\hbox{ for a.e. }h,h'\in[-1,1]\,.
\ee

\subsection{3-obstacle configurations}
\label{SS-3ObstConfig}
%%%%%%%%%%%%%%%%%%%%%%%%%%%%%%%%%%%%%%%%%%%%%%%%%%%%%%%%%%%%%%%%%%%

Before analyzing the dynamics of the Lorentz gas in the Boltzmann-Grad limit, let us describe the key ideas used in our proof of 
Theorem \ref{T-LimTr}.

We begin with an observation which greatly reduces the complexity of billiard dynamics for a periodic system of obstacles centered 
at the vertices of the lattice $\bZ^2$, in the small obstacle radius limit. This observation is another form of a famous statement about
rotations of an irrational angle on the unit circle, known as ``the three-length (or three-gap) theorem", conjectured by Steinhaus
and proved by V.T. S\'os \cite{Sos1958} --- see also \cite{Suranyi1958}.

Assume $\om\in\bS^1$ has components $\om^1,\om^2$ independent over $\bQ$. Particle trajectories leaving, say, the surface of the 
obstacle centered at the origin in the direction $\om$ will next collide with one of at most three, and generically three other obstacles. 

\begin{Lem}[Blank-Krikorian \cite{BlankKriko1993}, Caglioti-Golse \cite{CagliotiFG2003}]
\label{L-3obst}
Let $0<r<\tfrac12$, and $\om\in\bS^1$ be such that $0<\om^2<\om^1$ and $\om^2/\om^1\notin\bQ$. Then, there exists $(q,p)$ 
and $(\bar q,\bar p)$ in $\bZ^2$ such that 
$$
0<q<\bar q\,,\quad q\bar p-\bar qp=\si\in\{\pm1\}
$$
satisfying the following property:
$$
\ba
{}&\{x+\tau_r(x,\om)\om\,|\,|x|=r\hbox{ and }x\cdot\om\ge 0\}
\\
&\qquad\qquad\subset
\d D((q,p),r)\cup\d D((\bar q,\bar p),r)\cup\d D((q+\bar q,p+\bar p),r)\,,
\ea
$$
where $D(x_0,r)$ designates the disk of radius $r$ centered at $x_0$.
\end{Lem}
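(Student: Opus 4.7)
The approach is to translate the billiard question into a lattice-geometric one via coordinates adapted to $\om$, and then to resolve it using the continued-fraction structure of the slope $\a := \om^2/\om^1$. First, I introduce orthogonal coordinates on $\bR^2$ by setting $u(k) := k\cdot\om$ and $v(k) := k\cdot\om^\perp$ with $\om^\perp := (-\om^2,\om^1)$. A trajectory leaving $x\in\d D(0,r)$ with $x\cdot\om\ge 0$ in direction $\om$ is parametrized by its impact parameter $s := v(x)\in[-r,r]$, and enters the disk $D(k,r)$ at some positive time precisely when $|v(k)-s|\le r$ and $u(k)>\sqrt{r^2-s^2}$. The \emph{next} obstacle is therefore centered at the lattice vector of minimal $u(k)>0$ satisfying these constraints, and in particular such a vector lies in the strip
\[
\Sigma_r := \{k\in\bZ^2\setminus\{0\} :\, u(k)>0,\ |v(k)|\le 2r\}.
\]
Since $\a\notin\bQ$, no nonzero lattice point has $v(k)=0$, so $\Sigma_r$ splits cleanly into $\Sigma_r^+:=\Sigma_r\cap\{v>0\}$ and $\Sigma_r^-:=\Sigma_r\cap\{v<0\}$, both nonempty by Dirichlet's approximation theorem.

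Next, I define $e_+=(q,p)$ and $e_-=(\bar q,\bar p)$ to be the (unique) minimizers of $u$ in $\Sigma_r^+$ and $\Sigma_r^-$, respectively; existence and uniqueness follow from the injectivity of $k\mapsto(u(k),v(k))$ on $\bZ^2$, a consequence of the irrationality of $\a$. The hypotheses $0<\om^2<\om^1$ and $r<\tfrac12$ force $q,\bar q>0$, and after relabeling one may assume $q<\bar q$. The pivotal claim is the unimodularity $q\bar p-\bar q p = \pm 1$: I plan to establish this by recognizing $e_\pm$ as consecutive one-sided best approximations of $\a$ in the simple continued-fraction expansion $\a=[0;a_1,a_2,\ldots]$ --- namely, as the convergents or intermediate convergents whose $v$-projections have magnitude $\le 2r$ --- for which the unimodular identity is classical. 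Once unimodularity is granted, $(e_+,e_-)$ is a $\bZ$-basis of $\bZ^2$, so every $k\in\bZ^2$ decomposes uniquely as $k=ae_++be_-$ with $(a,b)\in\bZ^2$.

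A short case analysis on the signs of $(a,b)$, using $v(e_+)>0>v(e_-)$ together with $|v(e_\pm)|\le 2r$, then shows that the lattice points in $\Sigma_r$ with $u(k)\le u(e_+)+u(e_-)$ are precisely $e_+$, $e_-$, and $e_++e_-=(q+\bar q,\,p+\bar p)$: any other $(a,b)$ either drives $|v(k)|$ above $2r$ (for same-sign pairs with $|a|+|b|\ge 2$) or drives $u(k)$ above the threshold. Since the first obstacle reached by a trajectory with any impact parameter $s\in[-r,r]$ must satisfy $u(k)\le u(e_+)+u(e_-)$ --- otherwise at least one of $e_\pm$ or $e_++e_-$ would intercept it first --- this yields the three candidates stated in the lemma. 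The main difficulty will be the unimodularity step: a direct lattice-convexity argument based on Minkowski's theorem is tempting but can fail, because a lattice point inside the parallelogram spanned by $e_\pm$ need not contradict the minimality of either $e_+$ or $e_-$ (as one sees from examples with $u(e_+)\ll u(e_-)$); the correct route is to exploit the one-sided best-approximation property of $e_\pm$ to pin them down within the continued-fraction tower of $\a$ and invoke the standard unimodular identity for consecutive convergents.
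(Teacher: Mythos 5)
The paper does not prove Lemma \ref{L-3obst}; it is quoted from Blank--Krikorian and Caglioti--Golse (it is essentially a geometric reformulation of the three-distance theorem), so your proposal has to stand on its own. Your overall strategy --- rotated coordinates $u=k\cdot\om$, $v=k\cdot\om^\perp$, reduction to lattice points in the strip $\Sigma_r=\{u(k)>0,\ |v(k)|\le 2r\}$, and an appeal to the continued fraction of $\a=\om^2/\om^1$ for unimodularity --- is the right one. The unimodularity step you defer is indeed salvageable: one of $e_\pm$ turns out to be the convergent $(q_N,p_N)$ with $N=\inf\{n:\,d_n(\a)\le \eps\}$, $\eps=2r/\om^1$, and the other is a suitable intermediate fraction on the opposite side, so that $q\bar p-\bar q p=\pm1$ by the classical identity; but this still has to be written out, and it is not the main problem.

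The genuine gap is in the concluding case analysis. You claim that the only $k\in\Sigma_r$ with $u(k)\le u(e_+)+u(e_-)$ are $e_+$, $e_-$ and $e_++e_-$, on the grounds that a same-sign pair $(a,b)$ with $|a|+|b|\ge 2$ forces $|v(k)|>2r$. This is false for $(a,b)=(m,0)$, $m\ge2$. From the minimality of $e_\pm$ one only gets the one-sided bound $v(e_+)+|v(e_-)|>2r$ (otherwise $e_+-e_-$ or $e_--e_+$ would contradict minimality), and nothing prevents $v(e_+)\le r$, i.e.\ $d_N\le\eps/2$; in that case $2e_+\in\Sigma_r$ while, since the convergent always carries the smaller $u$-value among $e_\pm$, $u(2e_+)=2u(e_+)<u(e_+)+u(e_-)$. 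So the set you describe generically contains extra points $me_+$ (or $me_-$), and your argument does not yield the lemma. The statement is nevertheless true because these extra points are \emph{shadowed}: if $|mv(e_+)-s|\le r$ with $m\ge2$ and $s\in[-r,r]$, then $0<v(e_+)\le 2r/m\le r$ forces $s\ge mv(e_+)-r>v(e_+)-r$ and $s\le r<v(e_+)+r$, so $|v(e_+)-s|<r$ as well, and $e_+$ is met strictly before $me_+$. The correct lattice-geometric claim to prove is therefore not ``which points of $\Sigma_r$ have $u\le u(e_+)+u(e_-)$'' but ``which points of $\Sigma_r$ can be the first obstacle encountered,'' and the shadowing argument is the missing step that reduces the latter to $\{e_+,e_-,e_++e_-\}$. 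A related, milder imprecision: the first obstacle minimizes $u(k)-\sqrt{r^2-(s-v(k))^2}$, not $u(k)$; for small $r$ this $O(r)$ correction does not alter which obstacle is hit (it is what produces the $O(r^2)$ error term in Proposition \ref{P-AsympTr}), but your write-up should say so rather than tacitly replace the true first-hit criterion by $u$-minimization.
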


The lemma above is one of the key argument in our analysis.

To go further, we need a convenient set of parameters in order to handle all these 3-obstacle configurations as the direction $\om$ 
runs through $\bS^1$. 

For $\om$ as in Lemma \ref{L-3obst}, the sets
$$
\{x+t\om\,|\,|x|=r\,,\,\,x\cdot\om\ge 0\,,\,\,x+\tau_r(x,\om)\om\in\partial D((q,p),r)\,,\,\,t\in\bR\}
$$
and
$$
\{x+t\om\,|\,|x|=r\,,\,\,x\cdot\om\ge 0\,,\,\,x+\tau_r(x,\om)\omega\in\partial D((\bar q,\bar p),r)\,,\,\,t\in\bR\}
$$
are closed strips, whose widths are denoted respectively by $a$ and $b$. The following quantities are somewhat easier to handle:
\begin{equation}
\label{Def-ABQQ'}
Q=\frac{2rq}{\om^1}\,,\quad \bar Q=\frac{2r\bar q}{\om^1}\,,\quad A=\frac{a}{2r}\,,\quad B=\frac{b}{2r}
\end{equation}
(see Figure 3 for the geometric interpretation of $A,B,Q$ and $\overline{Q}$), and we shall henceforth denote them by
\begin{equation}
\label{Not-ABQQ'sigma}
Q(\om,r)\,,\quad \bar Q(\om,r)\,,\quad A(\om,r)\,,\quad B(\om,r)\,,\hbox{ together with }\sigma(\om,r)
\end{equation}
whenever we need to keep track of the dependence of these quantities upon the direction $\om$ and obstacle radius $r$ --- we 
recall that
$$
\si(\om,r)=q\bar p-p\bar q\in\{\pm 1\}\,.
$$

\begin{Lem}
\label{L-RelABQQ'}
Let $0<r<\tfrac12$, and $\om\in\bS^1$ be such that $0<\om^2<\om^1$ and $\om^2/\om^1\notin\bQ$. Then, one has
\begin{equation}
\lb{BoundAB}
\left\{
\ba
{}&0<A(\om,r)\,,\,\,B(\om,r)\,,\quad A(\om,r)+B(\om,r)\le 1\,,
\\
&0<Q(\om,r)<\bar Q(\Om,r)\,,
\ea
\right.
\end{equation}
and
$$
\bar Q(\om,r)(1-A(\om,r))+Q(\om,r)(1-B(\om,r))=1\,.
$$
\end{Lem}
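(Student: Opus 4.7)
The plan is to work in coordinates adapted to $\om$ and reduce the identity to the Bezout relation via a direct computation. First I would rotate the frame so that $\om$ lies along the positive $x$-axis: the lattice points $(q,p)$ and $(\bar q,\bar p)$ sit at $(X_1,\delta)$ and $(X_2,\bar\delta)$ respectively, with $X_1 = q\om^1 + p\om^2$, $X_2 = \bar q\om^1 + \bar p\om^2$, and perpendicular heights $\delta := p\om^1 - q\om^2$, $\bar\delta := \bar p\om^1 - \bar q\om^2$; the third Blank--Krikorian obstacle $(q+\bar q, p+\bar p)$ sits at $(X_1+X_2,\delta+\bar\delta)$. A direct substitution converts the Bezout relation $q\bar p - \bar q p = \sigma$ into the key algebraic identity
\[
q\,\bar\delta - \bar q\,\delta \;=\; \sigma\,\om^1.
\]

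Second, I would analyze the widths geometrically. The outgoing trajectories from $\partial D(0,r)$ in direction $\om$ are parametrized by their perpendicular offset $y_0 \in [-r,r]$, and such a trajectory reaches the disk centered at $(q,p)$ iff $y_0 \in I_1 := [\delta-r, \delta+r]$; there are analogous intervals $I_2$ for $(\bar q,\bar p)$ and $I_3$ for $(q+\bar q,p+\bar p)$. By Lemma \ref{L-3obst}, the three first-collision strips exhaust $[-r,r]$, so $a+b+c = 2r$. The next step is to extract from the same lemma that the pair $(q,p), (\bar q,\bar p)$ is non-degenerate: $\delta$ and $\bar\delta$ have opposite signs and $|\delta|+|\bar\delta| \ge 2r$. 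If instead they had the same sign, trajectories at $y_0$ on the opposite side of $0$ could not meet any of the three Blank--Krikorian disks; and if $|\delta|+|\bar\delta| < 2r$, then $I_1 \cup I_2$ already covers $[-r,r]$, the third Blank--Krikorian disk is unreachable, but one checks that then a fourth lattice disk (with smaller $X$-coordinate than the sum $(q+\bar q,p+\bar p)$) is reached first by some trajectory. Either configuration contradicts the containment in Lemma \ref{L-3obst}.

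With these reductions in place, assume without loss of generality that $\delta > 0 > \bar\delta$, whence $\sigma = -1$, and the explicit widths read $a = 2r-\delta$, $b = 2r+\bar\delta$, $c = \delta-\bar\delta-2r \ge 0$. The three bounds in the lemma then follow: $a,b > 0$ yields $0 < A, B$; $c \ge 0$ yields $A+B \le 1$; and $0<q<\bar q$ together with $\om^1>0$ yields $0 < Q < \bar Q$. The identity itself reduces to the one-line computation
\[
\bar Q(1-A) + Q(1-B) \;=\; \frac{\bar q(2r-a) + q(2r-b)}{\om^1} \;=\; \frac{\bar q\,\delta - q\,\bar\delta}{\om^1} \;=\; \frac{-\sigma\om^1}{\om^1} \;=\; 1,
\]
invoking the Bezout identity and $\sigma=-1$; the symmetric case $\delta < 0 < \bar\delta$, $\sigma=+1$ is identical. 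The main obstacle in this plan is the non-degeneracy step, which must extract from Lemma \ref{L-3obst} both the opposite-sign property and the lower bound $|\delta|+|\bar\delta| \ge 2r$. Once these geometric facts are in hand, everything else is the one-line Bezout computation above.
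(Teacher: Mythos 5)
Your algebraic reduction is correct and elegant: in coordinates adapted to $\om$, writing $\delta = p\om^1 - q\om^2$ and $\bar\delta = \bar p\om^1 - \bar q\om^2$ for the signed perpendicular heights, the Bezout relation $q\bar p - \bar q p = \sigma$ indeed becomes $q\bar\delta - \bar q\delta = \sigma\om^1$, and combined with the width formulas $a = 2r - |\delta|$, $b = 2r - |\bar\delta|$ this yields $\bar Q(1-A) + Q(1-B) = 1$ exactly as you say. Your argument that $\delta$ and $\bar\delta$ have opposite signs is also sound: if both were positive, trajectories leaving at $y_0 \in (-r,\min(\delta,\bar\delta)-r)$ would miss all three disks, and this interval is nonempty since $\a$ is irrational. (For what it is worth, the paper itself offers no proof of this lemma; it is cited from \cite{CagliotiFG2003} and \cite{BlankKriko1993}, and the natural route is to combine the continued-fraction identity (\ref{qndn}) with the formulas of Proposition \ref{P-FlaABQsi}.)

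The genuine gap is in the second non-degeneracy claim, $|\delta| + |\bar\delta| \ge 2r$, which is needed both to make the strip widths formula $b = 2r - |\bar\delta|$ valid and to get $A + B \le 1$. Your argument --- that in the overlap scenario a fourth lattice disk would be reached first, contradicting Lemma \ref{L-3obst} --- is not correct as stated. In the overlap scenario every trajectory hits $(q,p)$ or $(\bar q,\bar p)$; the third disk is simply never reached, and the containment of Lemma \ref{L-3obst} is a ``$\subset$'', not an equality, so it remains satisfied with no fourth disk forced into play. What actually rules out overlap is the specific Farey/continued-fraction structure of the Blank--Krikorian pair: with $p/q < \a < p'/q'$ consecutive Farey fractions of order $[1/\eps]$ and $d = q\a - p$, $d' = p'-q'\a$, the second obstacle is $(\bar q,\bar p) = (q'+kq, p'+kp)$ where $k$ is the unique integer with $d' - kd < \eps \le d' - (k-1)d$; then $|\delta| + |\bar\delta| = (d + (d'-kd))\om^1 = (d'-(k-1)d)\om^1 \ge \eps\om^1 = 2r$, which is precisely the lower bound you need. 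In other words, the non-degeneracy is not a consequence of the containment alone --- it is encoded in the choice of $(\bar q,\bar p)$, which your proposal treats as a black box handed over by Lemma \ref{L-3obst}. A secondary remark: the strict inequalities $0 < A(\om,r)$, $B(\om,r)$ can in fact fail on a Lebesgue-null set of $r$ (those for which $d_N = \eps$ or $d'-kd = \eps$), so the lemma is implicitly an a.e.-in-$r$ statement, consistent with all its uses in the paper.
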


This last equality entails the bound
\begin{equation}
\label{BoundQ}
0<Q(\om,r)<\frac{1}{2-A(\om,r)-B(\om,r)}\le 1\,.
\end{equation}

Therefore, each possible 3-obstacle configuration corresponding with the direction $\om$ and the obstacle radius $r$ is completely
determined by the parameters $(A,B,Q,\sigma)(\om,r)\in[0,1]^3\times\{\pm1\}$.

%%%%%%%%%%%%%%%%%%%%%%%%%%%%%%%%%%%%%%%%%%%%%%%%%%%%%%%%%%%%%%%%%%%%%%%

\begin{figure}\lb{F-Fig3}

\begin{center}

\includegraphics[width=10cm]{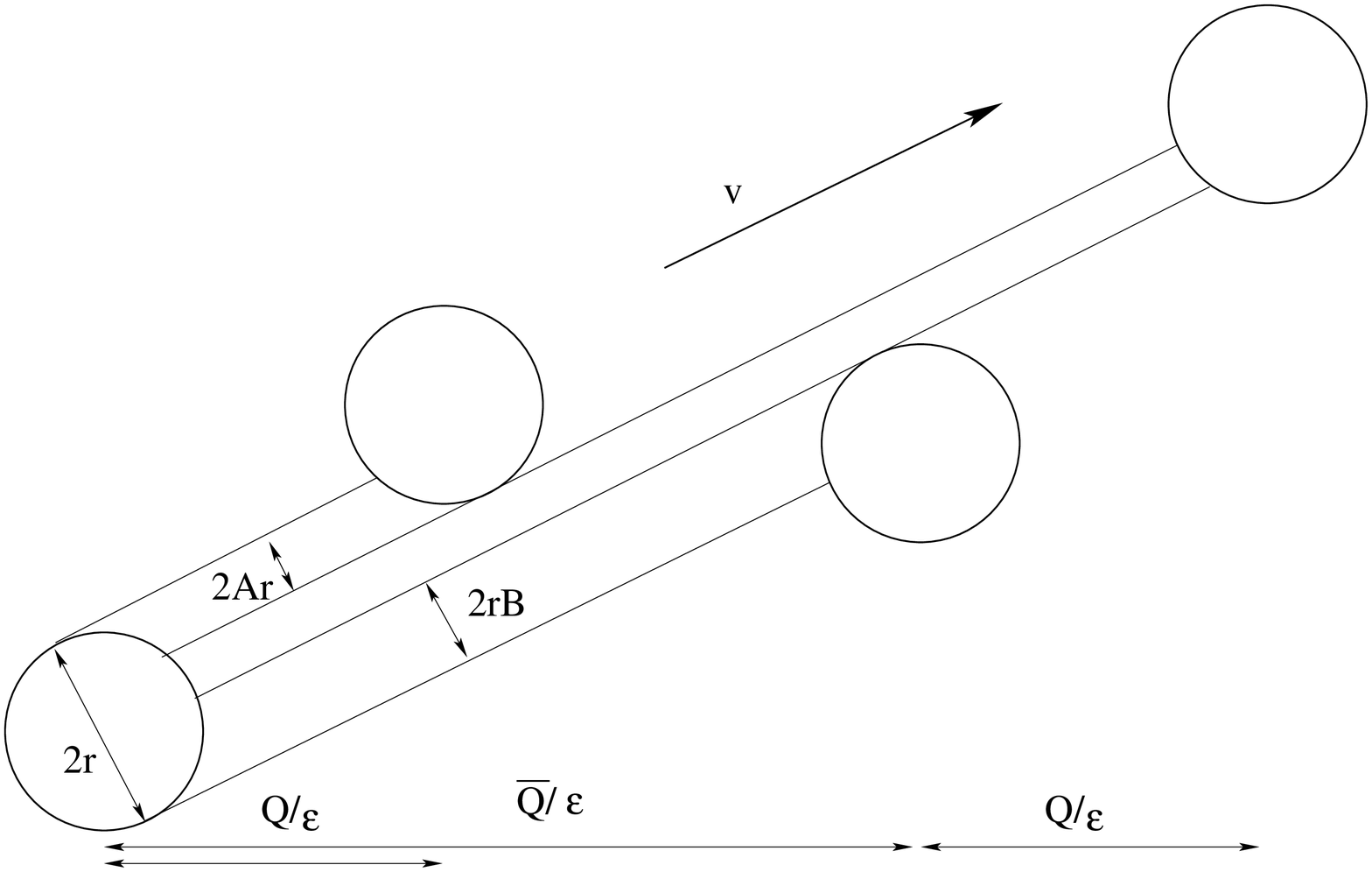}

\caption{Example of a 3-obstacle configurations, and the parameters $A$, $B$, $Q$ and $\overline{Q}$. Here $\eps=2r/\om_1$.}

\end{center}

\end{figure}

%%%%%%%%%%%%%%%%%%%%%%%%%%%%%%%%%%%%%%%%%%%%%%%%%%%%%%%%%%%%%%%%%%%%%%%

The proof of Theorem \ref{T-LimTr} is based on the two following ingredients.

The first is an asymptotic, explicit formula for the transfer map $T_r$ in terms of the parameters $A,B,Q,\sigma$ defined above. 

\begin{Prop}
\label{P-AsympTr}
Let $0<r<\tfrac12$, and $\om\in\bS^1$ be such that $0<\om^2<\om^1$ and $\om^2/\om^1\notin\bQ$. Then, for each $h'\in[-1,1]$ and 
each $r\in]0,\tfrac12[$, one has
$$
T_r(h',\om)=\mathbf{T}_{A(\om,r),B(\om,r),Q(\om,r),\si(\om,r)}(h')+(O(r^2),0)\,,
$$
in the limit as $r\to 0^+$. In the formula above, the map $\bT_{A,B,Q,\si}$ is defined for each $(A,B,Q,\si)\in[0,1]^3\times\{\pm1\}$ in 
the following manner:
\begin{equation}
\label{TransitLimit}
\begin{array}{ll}
\bT_{A,B,Q,\si}(h')=(Q,h'-2\si(1-A))&\quad\hbox{ if }\si h'\in[1-2A,1]\,,
\\
\bT_{A,B,Q,\si}(h')=\left(\overline{Q},h'+2\si(1-B)\right)&\quad\hbox{ if }\si h'\in[-1,-1+2B]\,,
\\
\bT_{A,B,Q,\si}(h')=\left(\overline{Q}+Q,h'+2\si(A-B)\right)&\quad\hbox{ otherwise.}
\end{array}
\end{equation}
\end{Prop}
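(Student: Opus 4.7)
The plan is to pass to a rotated frame adapted to $\omega$, describe the three candidate obstacles of Lemma \ref{L-3obst} in those coordinates, and then read off $2r\tau_r$ and the new impact parameter from elementary planar geometry. The three alternatives in (\ref{TransitLimit}) correspond to which of the three disks is struck first, and the sign $\sigma$ keeps track of the chirality of the configuration. Concretely, apply the rotation sending $\omega$ to the first basis vector $e_1$, so that a lattice point $(q,p)$ is mapped to $(X,c):=(q\omega^1+p\omega^2,\,p\omega^1-q\omega^2)$. Let $(X_i,c_i)$ ($i=1,2,3$) be the images of $(q,p)$, $(\bar q,\bar p)$, $(q+\bar q,p+\bar p)$; one immediately has $X_3=X_1+X_2$ and $c_3=c_1+c_2$. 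A particle leaving the origin-obstacle with outgoing direction $\omega$ and impact parameter $h'$ has starting point $x_0=rn_{x_0}$; in the rotated frame its trajectory is the horizontal line $\{y=-rh'\}$, with $x_{0,1}=r\cos\theta=O(r)$.

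For the case split, the horizontal line $\{y=-rh'\}$ meets the disk centred at $(X_i,c_i)$ of radius $r$ iff $|rh'+c_i|\le r$. By Lemma \ref{L-3obst} the obstacles at $(q,p)$ and $(\bar q,\bar p)$ capture disjoint sub-strips of widths $a$ and $b$ inside $\{-r\le y\le r\}$, so that $|c_1|,|c_2|\le 2r$ with opposite signs. Writing $c_1=-2r\sigma(1-A)$ and $c_2=+2r\sigma(1-B)$ for a sign $\sigma\in\{\pm1\}$ --- which a short computation using $c_i=p_i\omega^1-q_i\omega^2$ identifies with $q\bar p-\bar qp$ via the relation $c_1\bar q-c_2 q=-\sigma\omega^1$ --- the capture conditions translate precisely into the three ranges of $\sigma h'$ appearing in (\ref{TransitLimit}); on the middle range the obstacle hit is automatically the one at $(q+\bar q,p+\bar p)$, whose perpendicular offset is $c_3=c_1+c_2=2r\sigma(A-B)$.

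For the two components of $T_r(h',\omega)$, the collision time is $\tau_r=X_i-x_{0,1}-\sqrt{r^2-(rh'+c_i)^2}=X_i+O(r)$. Using $(\omega^1)^2+(\omega^2)^2=1$ together with $|c_1|\le 2r$, one rewrites $X_1=q/\omega^1+(\omega^2/\omega^1)c_1=q/\omega^1+O(r)$, whence $2r\tau_r=Q+O(r^2)$ in case 1, and analogously $\bar Q+O(r^2)$ and $Q+\bar Q+O(r^2)$ in cases 2 and 3. At the impact point the inward normal to $Z_r$ has $\omega^\perp$-component $(-rh'-c_i)/r$, and with the same orientation used to define $h'$ this yields the new impact parameter $h'+c_i/r$; substituting $c_i/r$ equal to $-2\sigma(1-A)$, $+2\sigma(1-B)$, or $2\sigma(A-B)$ reproduces the second coordinate of $\mathbf{T}_{A,B,Q,\sigma}(h')$ exactly, with no error term.

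The main obstacle I anticipate is keeping the sign conventions consistent throughout: once the orientation of the rotated frame is fixed and the sign of $h'$ is tied to $n_{x_0}\cdot e_2$, one must verify that the geometric sign appearing in $c_1=-2r\sigma(1-A)$, $c_2=+2r\sigma(1-B)$ really is $\sigma=q\bar p-\bar q p$, which entails revisiting how $(q,p)$ and $(\bar q,\bar p)$ are selected in Lemma \ref{L-3obst} and checking both chiralities. The remaining work is routine: the $O(r^2)$ in the first coordinate arises from multiplying the $O(r)$ displacement of the starting and ending collision points by the factor $2r$, while the second coordinate is exact because it depends only on $h'$ and $c_i$, both of which are known without approximation.
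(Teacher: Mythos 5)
The paper states Proposition~\ref{P-AsympTr} without proof: it is presented as an ``ingredient'' of Theorem~\ref{T-LimTr}, with the geometric picture of Figure~3 and the references \cite{BlankKriko1993,CagliotiFG2003} doing duty in lieu of an argument. So there is no proof in the paper to compare against; I can only evaluate your argument on its own.

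Your proof is correct, and it is essentially the computation one must do. The key steps all check out. Passing to the frame where $\omega=e_1$ sends the lattice point $(q,p)$ to $(X_1,c_1)$ with $c_1=p\omega^1-q\omega^2$, and since $d_{N}=|q\alpha-p|$ with $\alpha=\omega^2/\omega^1$ one has $|c_1|=\omega^1 d_N=2r(1-A)$ by Proposition~\ref{P-FlaABQsi}, matching your parametrization. The identity $c_1\bar q-c_2 q=(p\bar q-q\bar p)\omega^1=-\sigma\omega^1$ indeed forces $c_1$ and $c_2$ to have opposite signs (given $q,\bar q>0$), and pins down the sign pattern you wrote. The capture intervals for the two nearer disks have widths $2r-|c_1|=a$ and $2r-|c_2|=b$ and are disjoint because $|c_1|+|c_2|=2r(2-A-B)\ge 2r$ by $A+B\le 1$; the lemma then forces the middle range to hit the third disk, at height $c_3=c_1+c_2=2r\sigma(A-B)$. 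The exact identity $X_1=q/\omega^1+(\omega^2/\omega^1)c_1$ together with $c_1=O(r)$ gives $2r\tau_r=2rX_1+O(r^2)=Q+O(r^2)$ (and likewise for $\bar Q$, $Q+\bar Q$); the impact parameter $h=h'+c_i/r$ is exact because the trajectory is the horizontal line $\{y=-rh'\}$ and the new normal's $e_2$-component is $(-rh'-c_i)/r$.

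Two small places where a written-out version should say a bit more: first, you should make explicit that the disjointness of the two capture intervals uses $A+B\le 1$ from Lemma~\ref{L-RelABQQ'}, which is what guarantees the three cases in~(\ref{TransitLimit}) are mutually exclusive; second, you flag but do not fully carry out the sign bookkeeping linking your geometric $\sigma$ to $q\bar p-\bar q p$ --- that verification (the one I sketched above) should appear in full, because this is precisely the kind of orientation issue that silently flips a case. With those two points spelled out, this would be a complete and self-contained proof of the proposition.
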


For $\om=(\cos\th,\sin\th)$ with arbitrary $\th\in\bR$, the map $h'\mapsto T_r(h',\om)$ is computed using Proposition \ref{P-AsympTr}  by 
using the symmetries in the periodic configuration of obstacles as follows. Set $\tilde\th=\th-m\frac{\pi}2$ with $m=[\frac2{\pi}(\th+\frac{\pi}4)]$ 
(where $[z]$ is the integer part of $z$) and let $\tilde\om=(\cos\tilde\th,\sin\tilde\th)$. Then
\begin{equation}
\label{SymT_r}
T_r(h',\om)=(s,h)\,,\quad\hbox{ where }(s,\hbox{sign}(\tan\tilde\th)h)=T_r(\hbox{sign}(\tan\tilde\th)h',\tilde\om)\,.
\end{equation}

\smallskip
The second ingredient in our proof of Theorem \ref{T-LimTr} is an explicit formula for the limit of the distribution of 
$\om\mapsto (A,B,Q,\sigma)(\om,r)$ as $r\to 0^+$ in the sense of Ces\`aro on the first octant $\bS^1_+$ of $\bS^1$. 

\begin{Prop}\label{P-3ObstProba}
Let $F$ be any bounded and piecewise continuous function defined on the compact $\bK=[0,1]^3\times\{\pm1\}$. Then
\begin{equation}
\label{Lim3ObstParam}
\ba
\frac1{|\ln\eps|}\int_\eps^{1/4}&F(A(\om,r),B(\om,r),Q(\om,r),\si(\om,r))\frac{dr}{r}
\\
&\qquad\qquad\qquad\to\int_{\mathbf{K}}F(A,B,Q,\si)d\mu(A,B,Q,\si)
\ea
\end{equation}
a.e. in $\om\in\bS^1_+$ as $\eps\to 0^+$, where $\mu$ is the probability measure on $\bK$ given by
\begin{equation}
\label{3ObstProba}
d\mu(A,B,Q,\si)\!=
	\!{\frac{6}{\pi^2}}\indc_{0<A<1}\mathbf{1}_{0<B<1-A}\indc_{0<Q<\frac1{2-A-B}}\frac{dAdBdQ}{1-A}(\de_{\si=1}+\de_{\si=-1})\,.
\end{equation}
\end{Prop}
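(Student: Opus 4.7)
The plan is to express the parameters $(A,B,Q,\si)(\om,r)$ in terms of continued-fraction data attached to the slope $\a=\om^2/\om^1$, and then use ergodicity of the Gauss map on $(0,1)$ to reduce the Ces\`aro average in $r$ to a Birkhoff time average whose limit measure can be computed explicitly.

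Fix $\om\in\bS^1_+$ with $0<\om^2<\om^1$ and $\a=\om^2/\om^1$ irrational, and let $(p_n/q_n)$ denote the continued-fraction convergents of $\a$. First I would show that as $r$ decreases through a discrete sequence of critical radii $r_0(\a)>r_1(\a)>\ldots$, the pair $((q,p),(\bar q,\bar p))$ appearing in Lemma \ref{L-3obst} traverses the Stern--Brocot expansion of $\a$: at each $r_k$, one of the two integer pairs is replaced by the Stern--Brocot mediant of the two previous ones, $\si$ flips sign, and on each interval $(r_{k+1},r_k)$ the parameters $(A(\om,r),B(\om,r),Q(\om,r))$ are explicit affine functions of $r$ involving only $(q,\bar q,|q\a-p|,|\bar q\a-\bar p|)$. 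These formulas can be verified directly from the plane geometry depicted in Figure 3 together with standard continued-fraction identities, and already appear implicitly in the analysis of the transfer map $T_r$ (cf.\ Proposition \ref{P-AsympTr}).

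Next I would change variables from $r$ to $-\log r$, so that the logarithmic Ces\`aro average in (\ref{Lim3ObstParam}) becomes a genuine time average along a suspension flow over the natural extension of the Gauss map $T\colon\a\mapsto\{1/\a\}$: the base dynamics encodes the transition from one Stern--Brocot level to the next, and the roof function is $\log(r_k/r_{k+1})$, which is an explicit smooth function of the continued-fraction data. Since the Gauss map is ergodic for the Gauss measure and its natural extension is ergodic on $(0,1)^2$ for the measure with density proportional to $(1+\a\b)^{-2}$, of total mass $\zeta(2)=\pi^2/6$, Birkhoff's pointwise ergodic theorem yields the a.e.\ convergence of the Ces\`aro average in (\ref{Lim3ObstParam}) for every bounded continuous $F$. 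To identify the limit with the integral against $\mu$ in (\ref{3ObstProba}), I would push forward the invariant measure of the natural extension, tensored with $\tfrac12(\de_{\si=1}+\de_{\si=-1})$ to account for the two possible orderings of a Farey pair, under the change of coordinates computed in the first step: the support constraints are exactly (\ref{BoundAB})--(\ref{BoundQ}) from Lemma \ref{L-RelABQQ'}, the overall factor $6/\pi^2=1/\zeta(2)$ comes from the total mass of the invariant density, and the factor $(1-A)^{-1}$ is produced by the Jacobian of the explicit parametrization combined with the roof function of the suspension.

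The main obstacle will be this final Jacobian computation. The ergodic framework above is standard, but the map from Stern--Brocot coordinates to the geometric parameters $(A,B,Q)$ is genuinely nonlinear, and one must track it carefully in order to recover the precise density $\tfrac{6}{\pi^2}(1-A)^{-1}$ rather than some related but distinct expression. A secondary technical point is extending the a.e.\ convergence from bounded continuous test functions, to which Birkhoff's theorem applies directly, to the bounded piecewise continuous class required in the statement; this is handled by a standard approximation argument once the limit measure is shown to be absolutely continuous on the interior of its support.
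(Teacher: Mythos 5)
Your plan agrees with the paper's proof of the \emph{existence} of the a.e.\ limit (this is Theorem \ref{T-ErgoABQsi}, proved via Proposition \ref{P-FlaABQsi} and Birkhoff's theorem applied to the Gauss map), but it diverges sharply on the \emph{identification} of the limit measure, and that is where the paper deliberately takes a different road. The paper states explicitly, at the start of Section \ref{S-3ObstProba}, that computing the limit directly through the ergodic average --- i.e.\ evaluating $L_{m+1}(f_{m,+})$ from (\ref{Def-Lm}) and then letting $m\to\infty$ --- ``would be most impractical'', precisely because $q_{N(\alpha,\eps)}(\alpha)$ depends on the entire continued-fraction history of $\alpha$ (see (\ref{Fla-qn})), not on finitely many iterates. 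Your proposal to pass to the natural extension of the Gauss map and to a suspension flow is a sensible device aimed at exactly this obstruction, since the two-sided system does carry the full history; but you correctly flag that the ensuing change-of-coordinates / Jacobian computation that should produce the density $\tfrac{6}{\pi^2}(1-A)^{-1}$ is the real crux, and you leave it open. (Incidentally, the constant $6/\pi^2$ does not come from ``the total mass of the invariant density'' --- that density is a probability measure; it comes, through Lemma \ref{L-EquivN}, from the Lévy constant $\tfrac{\pi^2}{12\ln 2}$ matching $N(\alpha,\eps)$ with $\ln(1/\eps)$, after which the $\ln 2$ cancels against the Gauss measure's normalization.)

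The paper instead identifies the limit by an entirely different device: it switches from continued fractions to Farey fractions (Lemma \ref{L-Farey/Cont}, Proposition \ref{P-FareyABQ}), computes the asymptotic distribution of the auxiliary triple $(Q,Q',D)$ over $\bS^1_+$ using Kloosterman-sum equidistribution borrowed from Boca--Zaharescu (Lemma \ref{L-DistQQ'D}), pushes this forward to $(A,B,Q)$ by an explicit, comparatively simple Jacobian (Lemma \ref{L-DistABQ}), and finally upgrades the $\om$-averaged statement to the a.e.\ pointwise one by a Fubini plus dominated-convergence argument that identifies the constant $\cL(F)$ from Theorem \ref{T-ErgoABQsi}. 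This last step --- using the spatial average over $\om$ to pin down the a.e.\ pointwise limit whose existence is already known --- is the elegant trick that lets the paper avoid the hard ergodic Jacobian you would have to face. Your route is not necessarily wrong, and if carried through it would arguably be more self-contained (no appeal to Kloosterman-sum estimates), but as written it is a plan with the central computation missing, and the paper's own remark is a warning that this computation is substantially harder than it may look.
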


This result is perhaps more transparent when stated in terms of the new parameters $A,B'=\frac{B}{1-A},Q,\sigma$ instead of the 
original $A,B,Q,\sigma$: an elementary change of variables in the integral on the right hand side of (\ref{Lim3ObstParam}) shows 
that
$$
\ba
\int_{\bK}&F(A,B,Q,\si)d\mu(A,B,Q,\si)
\\
&\qquad\qquad\qquad
=
\int_{\bK}F(A,(1-A)B',Q,\sigma)d\mu'(A,B',Q,\si)\,,
\ea
$$
where
$$
\ba
{}&d\mu'(A,B',Q,\sigma)
\\
&\qquad
\!=\!{\frac{6}{\pi^2}}\indc_{0<A<1}\indc_{0<B'<1}\indc_{0<Q<\frac1{1+(1-A)(1-B')}}dAdB'dQ(\de_{\si=1}+\de_{\si=-1})\,.
\ea
$$
In other words, the new parameters $A,B',Q$ and $\si$ are uniformly distributed over the maximal domain compatible with the bounds 
(\ref{BoundAB}) and (\ref{BoundQ}).

The first part of Theorem \ref{T-LimTr} follows from combining the two propositions above; in particular,  for each $h'\in[-1,1]$,  the transition 
probability $P(S,h|h')dSdh$ is obtained as the image of the probability measure $\mu$ in (\ref{3ObstProba}) under the transformation $(A,B,Q,\si)
\mapsto\bT_{A,B,Q,\si}(h')$.

\subsection{The Limiting Dynamics}
\label{SS-LimDyn}
%%%%%%%%%%%%%%%%%%%%%%%%%%%%%%%%%%%%%%%%%%%%%%%%%%%%%%%%%%%%%%%%%%%

With the parametrization of all 3-obstacle configurations given above, we return to the problem of describing the Boltzmann-Grad limit 
of the Lorentz gas dynamics.

Let $(x,\om)\in Z_r\times\bS^1$, and let the sequence of collision times $(t_j)_{j\ge 0}$, collision points $(x_j)_{j\ge 0}$ and post-collision 
velocities $(\om_j)_{j\ge 0}$ be defined as in (\ref{Def-tn}) and (\ref{Def-xn-omn}). The particle trajectory starting from $x$ in the direction 
$\om$ at time $t=0$ is obviously completely defined by these sequences. 

As suggested above, the sequences $(t_j)_{j\ge 0}$, $(x_j)_{j\ge 0}$ and $(\om_j)_{j\ge 0}$ can be reconstructed with the transfer map, 
as follows.

Set
\begin{equation}
\label{InitInduc}
\ba
t_0&=\tau_r(x,\omega)\,,
\\
x_0&=x+\tau_r(x,\omega)\omega\,,
\\
h_0&=h_r(x_0,\omega)\,,
\\
\omega_0&=R[\pi-2\arcsin(h_0)]\omega\,.
\ea
\end{equation}
We then define the sequences $(t_j)_{j\ge 0}$, $(x_j)_{j\ge 0}$ inductively, in the following manner:
\begin{equation}
\label{Induc-shtxom}
\ba
(2s_{j+1},h_{j+1})&=T_r(h_j,\om_j)\,,
\\
t_{j+1}&=t_j+\frac1{r}s_{j+1}\,,
\\
x_{j+1}&=x_j+\frac1{r}s_{j+1}\om_j\,,
\\
\om_{j+1}&=R[\pi-2\arcsin(h_{j+1})]\om_j\,,
\ea
\end{equation}
for each $j\ge 0$.

If the sequence of 3-obstacle configuration parameters 
$$
\bbb_j^r=((A,B,Q,\sigma)(\om_j,r))_{j\ge 0}
$$
converges (in some sense to be explained below) as $r\to 0^+$ to a sequence of independent random variables $(\bbb_j)_{j\ge 0}$ 
with values in $\bK$, then the dynamics of the periodic Lorentz gas in the Boltzmann-Grad limit can be  described in terms of the 
discrete time Markov process defined as
$$
(S_{j+1},H_{j+1})=\bT_{\bbb_j}(H_j)\,,\quad j\ge 0\,.
$$

Denote $\cZ_{j+1}=\bR^2\times\bS^1\times\bR_+\times[-1,1]\times\bK^{n+1}$ for each $j\ge 0$. The asymptotic independence above 
can be formulated as follows: there exists a probability measure $P_0$ on $\bR_+\times[-1,1]$ such that, for each $j\ge 0$ and 
$\Psi\in C(\cZ_{n+1})$,
$$
\ba
\lim_{r\to 0^+}\int_{rZ_r\times\bS^1}&\Psi(x,\om,r\tau_r({\tfrac{x}r},\om),h_r({\tfrac{x_0}r},\om_0),\bbb_0^r,\ldots,\bbb_n^r)dxd\om
\\
&=
\int_{\mathcal{Z}_{n+1}}\Psi(x,\om,\tau,h,\bbb_0,\ldots,\bbb_n)dxd\om dP_0(\tau,h)d\mu(\bbb_0)\ldots d\mu(\bbb_n)\,,
\ea
\leqno(H)
$$
where $\mu$ is the measure defined in (\ref{3ObstProba}).

This scenario for the limiting dynamics is confirmed by the following 

\begin{Thm}\label{T-LimEq}
Let $f^{in}$ be any continuous, compactly supported probability density on $\bR^2\times\bS^1$. Denoting by $R[\th]$ the rotation of an 
angle $\th$, let $F\equiv F(t,x,\om,s,h)$ be the solution of
\begin{equation}
\label{EqLim}
\left\{
\ba
{}&(\d_t+\om\cdot\grad_x-\d_s)F(t,x,\omega,s,h)=\int_{-1}^12P(2s,h|h')F(t,x,R[\th(h')]\om,0,h')dh'\,,
\\
&F(0,x,\om,s,h)=f^{in}(x,\om)\int_{2s}^\infty\int_{-1}^1P(\tau,h|h')dh'd\tau\,,
\ea
\right.
\end{equation}
where $(x,\om,s,h)$ runs through $\bR^2\times\bS^1\times\bR^*_+\times]-1,1[$, and $\th(h)=\pi-2\arcsin(h)$. 

Then the family $(f_r)_{0<r<\frac12}$ defined in (\ref{Def-f_r}) satisfies
\begin{equation}
\label{LimBG}
f_r\to\int_0^\infty\int_{-1}^1F(\cdot,\cdot,\cdot,s,h)dsdh\hbox{ in $L^\infty(\bR_+\times\bR^2\times\bS^1)$ weak-$*$}
\end{equation}
as $r\to 0^+$.
\end{Thm}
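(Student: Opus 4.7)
The plan is to introduce an extended distribution $F_r$ on $\bR_+ \times \bR^2 \times \bS^1 \times \bR_+ \times [-1, 1]$ that carries, along with the particle's position and velocity, the scaled time $s$ until its next collision and the impact parameter $h$ at that collision. For each $(t, x, \om)$ in the microscopic phase space, denote by $s^*(t, x, \om)$ and $h^*(t, x, \om)$ these two quantities, computed along the forward trajectory starting at $(x, \om)$ at time $t$. Then define $F_r$ weakly via
\begin{equation*}
\int F_r(t, \cdot)\, \Psi \, dx\, d\om\, ds\, dh = \int f_r(t, x, \om)\, \Psi(x, \om, s^*(t,x,\om), h^*(t,x,\om))\, dx\, d\om
\end{equation*}
for test functions $\Psi \in C_c$. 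Since the marginal of $F_r$ in $(s, h)$ is $f_r$, $L^\infty$ weak-$*$ convergence $F_r \wto F$ will immediately yield (\ref{LimBG}).

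Next I would verify that $F_r$ satisfies, in the sense of distributions, a transport-jump equation of the form
\begin{equation*}
(\d_t + \om \cdot \grad_x - \d_s) F_r = \de_{s=0}\, \cK_r[F_r],
\end{equation*}
where the jump operator $\cK_r$ re-samples $(s, h)$ at each collision according to the transfer map $T_r$ and simultaneously rotates $\om$ by $\pi - 2\arcsin(h)$; between collisions the dynamics is pure transport in $(x, s)$ with $\om$ fixed. Iterating via Duhamel's formula, I would represent $F_r(t, \cdot)$ as a series indexed by the number of collisions elapsed since $t = 0$, each term being an integral against the joint law of $(\bbb_0^r, \dots, \bbb_n^r)$ along the trajectory. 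Proposition \ref{P-AsympTr} replaces $T_r$ by the explicit piecewise-affine map $\bT_{A, B, Q, \si}$ with error $O(r^2)$, while Proposition \ref{P-3ObstProba} identifies the one-point asymptotic law of $(A, B, Q, \si)(\om_j, r)$. Under the asymptotic independence hypothesis (H), each Duhamel iterate converges to the corresponding iterate in the expansion of (\ref{EqLim}), with $P(2s, h|h')$ arising as the pushforward of $\mu$ under $(A, B, Q, \si) \mapsto \bT_{A, B, Q, \si}(h')$, as noted right after Proposition \ref{P-3ObstProba}. Uniform control of the tail of the series is provided by the estimate (\ref{BoundIntP}).

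The main obstacle is establishing the hypothesis (H): Proposition \ref{P-3ObstProba} only yields the one-point distribution of $\bbb_0^r(\om)$, its proof proceeding via Birkhoff's theorem applied to a dynamical system encoding the continued-fraction expansion of $\om^2/\om^1$. Upgrading this to joint convergence of $(\bbb_0^r, \dots, \bbb_n^r)$ towards independent $\mu$-distributed variables requires showing that successive 3-obstacle configurations decorrelate; heuristically, between collisions $j$ and $j+1$ the velocity is rotated by $\pi - 2\arcsin(h_{j+1})$, and for a generic $\om$ this rotation scrambles the continued-fraction data relevant to the next 3-obstacle configuration. This is the technical heart of the theorem; once (H) is granted, the Duhamel passage to the limit becomes a standard termwise argument, and the precise form of the initial condition in (\ref{EqLim}) arises because a particle sampled uniformly in the free-flight zone has its next free-path and impact $(s, h)$ distributed according to the tail $\int_{2s}^\infty \int_{-1}^1 P(\tau, h|h')\, dh'\, d\tau$.
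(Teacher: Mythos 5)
Your proposal faithfully reconstructs the strategy announced in \cite{CagliotiFG2008}: pass to the extended phase space $(x,\om,s,h)$, write a transport-plus-jump formulation, expand in a Duhamel series indexed by collision number, replace $T_r$ by $\bT_{A,B,Q,\si}$ via Proposition~\ref{P-AsympTr}, and identify the limiting transition probability as the pushforward of $\mu$, as in Theorem~\ref{T-LimTr}. But be aware that the paper itself does \emph{not} prove Theorem~\ref{T-LimEq} by this route. Immediately after the statement the authors say that equation (\ref{EqLim})--(\ref{LimBG}) was proposed in \cite{CagliotiFG2008} ``under some additional decorrelation assumption left unverified --- specifically, assuming (H),'' and that Marklof--Str\"ombergsson in \cite{MarkloStrom2008} ``provided a complete, rigorous derivation of that same equation \dots\ without any additional assumption, thereby establishing the theorem above.'' So the proof of this theorem, as cited here, is due to Marklof--Str\"ombergsson, whose method (equidistribution of translates of lattices in the space of affine lattices, via homogeneous dynamics and Ratner theory) is entirely different from the continued-fraction/Birkhoff machinery developed in Sections 4--6 of this paper.

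The gap you identify, hypothesis (H), is therefore not a ``technical heart'' that yields to more effort along the lines of Proposition~\ref{P-3ObstProba}; it is the entire mathematical content, and it is \emph{not} established by the methods of this paper. There is moreover a structural obstruction your sketch passes over. The convergence in Proposition~\ref{P-3ObstProba} (and in Theorems~\ref{T-Ergo} and \ref{T-ErgoABQsi}, on which it rests) is obtained via Birkhoff's ergodic theorem and holds only in the Ces\`aro sense in the obstacle radius, i.e.\ as $\frac{1}{|\ln\eps|}\int_\eps^{1/4}(\cdots)\frac{dr}{r}$, pointwise a.e.\ in $\om$. Both Theorem~\ref{T-LimEq} and hypothesis (H) require the plain limit $r\to 0^+$, with weak-$*$ rather than pointwise convergence in $\om$. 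The paper explicitly contrasts these two modes of convergence when comparing its result with those of Marklof--Str\"ombergsson and Bykovskii--Ustinov. So even if the one-point equidistribution of Proposition~\ref{P-3ObstProba} could be upgraded to the $n$-point joint asymptotic independence of $(\bbb^r_0,\dots,\bbb^r_n)$ --- which has not been done and which your ``scrambling'' heuristic does not make precise --- the Birkhoff-based framework would yield the statement only along a Ces\`aro mean in $r$, not in the form stated. This is exactly why the proof of Theorem~\ref{T-LimEq} is deferred to \cite{MarkloStrom2008}.
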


Let us conclude this presentation of our main results with a few remarks. 

Equation (\ref{EqLim})-(\ref{LimBG}) was proposed first in \cite{CagliotiFG2008}, under some additional decorrelation assumption left 
unverified --- specifically, assuming (H). Then, Marklof-Str\"ombergsson provided a complete, rigorous derivation of that same equation 
in \cite{MarkloStrom2008}, without any additional assumption, thereby establishing the theorem above.

The main novelty in this description of the Boltzmann-Grad limit of the periodic Lorentz gas is the fact that it involves a Markov process 
in the extended phase space $\bR^2\times\bS^1\times\bR_+\times[-1,1]$. In addition to the space and velocity variables $x$ and $\om$ 
that are usual in the classical kinetic theory of gases, this extended phase space involves two extra variables: i.e. $s$, the (scaled) time 
to the next collision and $h$, the impact parameter at that next collision, as additional coordinates describing the state of the moving 
point particle. To the best of our knowledge, the idea of using this extended phase space (and particularly the additional variables $s$
and $h$) appeared for the first time in our announcements \cite{Golse2007,CagliotiFG2008}.

%%%%%%%%%%%%%%%%%%%%%%%%%%%%%%%%%%%%%%%%%%%%%%%%%%%%%%%%%%%%%%%%%%%

\section{Main Results II: Dynamical Properties of the Limiting Equation}
\label{S-PropLimit}
%%%%%%%%%%%%%%%%%%%%%%%%%%%%%%%%%%%%%%%%%%%%%%%%%%%%%%%%%%%%%%%%%%%

The present section establishes some fundamental mathematical properties of equation (\ref{EqLim}). For simplicity, we henceforth restrict our attention to the case where the space variable $x$ varies in the flat $2$-torus $\bT^2=\bR^2/\bZ^2$.

\subsection{Equilibrium states}
\label{SS-EquilState}
%%%%%%%%%%%%%%%%%%%%%%%%%%%%%%%%%%%%%%%%%%%%%%%%%%%%%%%%%%%%%%%%%%%

As is well-known, in the kinetic theory of gases, the equilibrium states are the uniform Maxwellian distributions. They are characterized 
as the only distribution functions that are independent of the space variable and for which the collision integral vanishes identically. 

In equation (\ref{EqLim}), the analogue of the Boltzmann collision integral is the quantity
$$
\int_{-1}^12P(2s,h|h')F(t,x,R[\th(h')]\om,0,h')dh'+\partial_sF(t,x,\om,s,h)\,.
$$
On the other hand, the variables $(s,h)$ play in equation (\ref{EqLim}) the same role as the velocity variable in classical kinetic theory.

Therefore, the equilibrium distributions analogous to Maxwellians in the kinetic theory of gases are the nonnegative measurable functions $F\equiv F(s,h)$ such that
$$
-\d_sF(s,h)=\int_{-1}^12P(2s,h|h')F(0,h')dh'\,,\quad s>0\,,\,\,-1<h<1\,.
$$

\begin{Thm}\lb{T-PropEquil}
Define 
$$
E(s,h):=\int_{2s}^{+\infty}\int_{-1}^1P(\tau,h|h')dh'd\tau\,.
$$
1) Then
$$
E(0,h)=1\,,\quad -1<h<1
$$
and 
$$
\int_0^{+\infty}\int_{-1}^1E(s,h)dhds=\int_0^{+\infty}\int_{-1}^1\int_{-1}^1\tfrac12SP(S,h|h')dhdh'dS=1\,.
$$
2) Let $F\equiv F(s,h)$ be a bounded, nonnegative measurable function on $\bR_+\times[-1,1]$ such that $s\mapsto F(s,h)$ is
continuous on $\bR_+$ for a.e. $h\in[-1,1]$ and
$$
\left\{
\ba
{}&-\d_sF(s,h)=\int_{-1}^12P(2s,h|h')F(0,h')dh'\,,\quad s>0\,,\,\,-1<h<1\,,
\\
&\lim_{s\to+\infty}F(s,h)=0\,.
\ea
\right.
$$
Then there exists $C\ge 0$ such that 
$$
F(s,h)=CE(s,h)\,,\quad s>0\,,\,\,-1<h<1\,.
$$
3) Define\footnote{The existence of this limit, and an explicit formula for $p$ are obtained in \cite{BocaZaharescu}.}
$$
p(t)=\lim_{r\to 0^+}\frac{|\{(x,\om)\in(Z_r\cap[0,1]^2)\times\bS^1\,|\,2r\tau_r(x,\om)>t\}|}{|(Z_r\cap[0,1]^2)\times\bS^1|}\,.
$$
Then
$$
\int_{-1}^1E(s,h)dh=-2p'(2s)\,,\quad s>0\,,
$$
and
$$
\int_{-1}^1E(s,h)dh\sim\frac{1}{\pi^2s^2}\hbox{ as }s\to+\infty\,.
$$
\end{Thm}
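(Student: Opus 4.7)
The identity $E(0,h)=1$ is a direct rewriting of the second normalization in (\ref{Int-Pdsdh=1}). The first equality in part 1 follows by Fubini on the region $\{0<2s<\tau\}$. The non-trivial claim is that $\int\!\int\!\int\tfrac12 SP\,dh\,dh'\,dS=1$; I would obtain this from the transfer-map description of $P$. Since, for each $h'$, $P(S,h|h')\,dS\,dh$ is the image of $\mu$ under $(A,B,Q,\sigma)\mapsto\bT_{A,B,Q,\sigma}(h')$, and since by Proposition~\ref{P-AsympTr} the first coordinate of $\bT_{A,B,Q,\sigma}(h')$ takes the values $Q,\bar Q,Q+\bar Q$ on subintervals of $h'$ of lengths $2A,2B,2(1-A-B)$, one computes
\[
\tfrac12\int_{-1}^1\!\int_0^\infty\!\int_{-1}^1 S\,P(S,h|h')\,dh\,dS\,dh' =\int_{\bK}\!\bigl(AQ+B\bar Q+(1-A-B)(Q+\bar Q)\bigr)\,d\mu =\int_{\bK}\!\bigl(Q(1-B)+\bar Q(1-A)\bigr)\,d\mu=1,
\]
the integrand being identically $1$ on $\mathrm{supp}\,\mu$ by Lemma~\ref{L-RelABQQ'}.

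\textbf{Part 2.} Because the right-hand side of the ODE depends on $F$ only through $G:=F(0,\cdot)$, integration in $s$ together with the vanishing condition at infinity produces the eigenvalue equation $G(h)=\int_{-1}^1\Pi(h|h')G(h')\,dh'$, where $\Pi(h|h'):=\int_0^\infty P(\tau,h|h')\,d\tau$. By Corollary~\ref{C-LimTr} the kernel $\Pi$ is stochastic and self-adjoint; the explicit formula (\ref{FlaTransitProba-hh'}) extended via the symmetries (\ref{SymPi}) shows $\Pi(h|h')>0$ for a.e.\ $h\ne h'$. A Perron-Frobenius / maximum-principle argument then forces $G$ to be constant: $G$ admits a continuous representative (as $h\mapsto\int\Pi(h|h')G(h')dh'$ is continuous), its minimum is attained at some $h^\ast$, and $G(h^\ast)=\int\Pi(h^\ast|h')G(h')\,dh'\ge\min G$ with equality forcing $G\equiv\min G$ a.e.\ by positivity of the kernel against the nonnegative $G-\min G$. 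Substituting $G\equiv C$ back, together with $\int_0^\infty\!\int P\,dh'\,d\tau=1$, gives $F(s,h)=C\,E(s,h)$.

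\textbf{Part 3.} Santal\'o's formula applied on the unit cell gives
\[
\int_{\Gamma_r^+/\bZ^2}(\om\cdot n_y)\tau_r\,d\sigma\,d\om=|(Z_r\cap[0,1]^2)\times\bS^1|=(1-\pi r^2)\cdot 2\pi,
\]
and the analogous formula with $\mathbf{1}_{\tau_r(x,\om)>t/(2r)}$ in place of $1$ yields
\[
-p'(t)=\lim_{r\to 0^+}\frac{\int_{\Gamma_r^+/\bZ^2}(\om\cdot n_y)\mathbf{1}_{2r\tau_r>t}\,d\sigma\,d\om}{\int_{\Gamma_r^+/\bZ^2}(\om\cdot n_y)(2r\tau_r)\,d\sigma\,d\om}.
\]
In the parametrization $(h',\om)\in[-1,1]\times\bS^1$ of $\Gamma_r^+/\bZ^2$, the cosine measure $(\om\cdot n_y)\,d\sigma\,d\om$ becomes $r\,dh'\,d\om$; applying Theorem~\ref{T-LimTr} (integrated in $(h',\om)$, in Cesaro form in $r$) passes the ratio to the limit, giving $-p'(t)=\tfrac12\int_t^\infty\!\int\!\int P\,d\tau\,dh\,dh'$, the denominator equaling $1$ by Part~1. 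Setting $t=2s$ yields the first identity. For the asymptotic, (\ref{FlaTransitProbaSimpl}) shows that the support of $P(S,\cdot|\cdot)$ for $S\gg 1$ concentrates near the corners $(h,h')=(\pm 1,\mp 1)$. Setting $h=1-\beta,h'=-1+\alpha$ near $(1,-1)$ with $\alpha,\beta\in[0,2/S]$, the subregion $\alpha\ge\beta$ is covered directly by (\ref{FlaTransitProbaSimpl}) and the subregion $\alpha<\beta$ is covered by the symmetry $P(S,h|h')=P(S,h'|h)$; each contributes $2/(\pi^2 S^3)$ to leading order, and doubling for the mirror corner $(-1,1)$ gives $\int\!\int P\sim 8/(\pi^2 S^3)$, whence $\int_{-1}^1 E(s,h)\,dh\sim 1/(\pi^2 s^2)$.

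\textbf{Principal obstacle.} Parts 1 and 2 reduce to algebra once the right structural fact is invoked (the identity $Q(1-B)+\bar Q(1-A)\equiv 1$ on $\mathrm{supp}\,\mu$ for Part 1, and positivity of $\Pi$ for Part 2). The real work is in Part 3: justifying the passage to the limit in Santal\'o's ratio via Theorem~\ref{T-LimTr} (which applies \emph{after} averaging in $(h',\om)$ and in Cesaro form in $r$), and enumerating all four subregions near the two corners $(\pm 1,\mp 1)$ so as not to lose factors of $2$ in the asymptotic constant.
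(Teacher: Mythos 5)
Your proposal is correct in outline and takes a genuinely different route from the paper at several points; it is worth comparing them. For the identity $\int_0^\infty\int_{-1}^1\int_{-1}^1\tfrac12 SP\,dh\,dh'\,dS=1$ the paper does \emph{not} compute this directly: it derives $\int_{-1}^1 E(s,h)\,dh=-2p'(2s)$ in Part~3 (via Theorem~\ref{T-LimTr} and formula~(1.3) of \cite{BocaZaharescu}), and then integrates in $s$ to get $p(0)=1$. Your alternative --- pushing forward $\mu$ under $\bT_{\bbb}(h')$, summing the $Q$, $\bar Q$, $Q+\bar Q$ contributions weighted by $2A$, $2B$, $2-2A-2B$, and invoking the identity $Q(1-B)+\bar Q(1-A)=1$ from Lemma~\ref{L-RelABQQ'} --- is self-contained, decouples Part~1 from Part~3, and is a nice illustration of why that geometric identity is the right normalization. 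For Part~2, the paper multiplies the eigenvalue equation by $\Phi$, integrates, and uses the doubly-stochastic symmetry $\Pi(h|h')=\Pi(h'|h)$ to rewrite $\int\Phi^2-\iint\Pi\,\Phi(h)\Phi(h')=\tfrac12\iint\Pi\,(\Phi(h)-\Phi(h'))^2\ge0$; positivity of $\Pi$ then forces $\Phi\equiv\text{const}$. Your Perron--Frobenius/maximum-principle argument reaches the same conclusion but requires the extra (plausible but unaddressed) claim that $h\mapsto\int\Pi(h|h')G(h')\,dh'$ has a continuous representative, i.e. that $h\mapsto\Pi(h|\cdot)$ is $L^1$-continuous; the paper's $L^2$ computation needs no such regularity and is cleaner given that $F$, hence $\Phi$, is only assumed bounded measurable. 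For the asymptotic in Part~3, the paper quotes the power-series expansion of $p''$ from formula~(1.5) of \cite{BocaZaharescu}; your direct corner computation from~(\ref{FlaTransitProbaSimpl}), giving $\iint P\sim 8/(\pi^2S^3)$ and hence $\int E\,dh\sim 1/(\pi^2 s^2)$, is more self-contained and I verified the constants (each of the four symmetry-related subregions contributes $2/(\pi^2S^3)$, and $\int_{2s}^\infty 8\pi^{-2}\tau^{-3}d\tau=\pi^{-2}s^{-2}$).

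The one real soft spot is the limit passage in Part~3. You write that ``applying Theorem~\ref{T-LimTr}\ldots passes the ratio to the limit,'' but Theorem~\ref{T-LimTr} gives only a Ces\`aro-averaged limit in $r$, while $p(t)$ is defined by an ordinary $r\to 0^+$ limit. One needs the unaveraged limit of the numerator to exist before identifying it with the Ces\`aro limit $\tfrac12\int_t^\infty\iint P$. The paper resolves exactly this by citing formula~(1.3) of \cite{BocaZaharescu}, which establishes the unaveraged limit via a Santal\'o-type identity; you should add the same citation (or prove existence of the unaveraged limit) rather than leaning on Theorem~\ref{T-LimTr} alone.
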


Notice that the class of physically admissible initial data for our limiting equation (\ref{EqLim}) consists of densities of the form
$$
F^{in}(x,\om,s,h)=f^{in}(x,\om)E(s,h)
$$
--- see Theorem \ref{T-LimEq}. In other words, physically admissible initial data are ``local equilibrium densities", i.e. equilibrium 
densities in $(s,h)$ modulated in the variables $(x,\om)$.

\smallskip
Before going further, we need some basic facts about the evolution semigroup defined by the Cauchy problem (\ref{EqLim}).
The existence and uniqueness of a solution of the Cauchy problem (\ref{EqLim}) presents little difficulty. It is written in the form
$$
F(t,\cdot,\cdot,\cdot)=K_tF^{in}\,,\quad t\ge 0\,,
$$
where $(K_t)_{t\ge 0}$ is a strongly continuous linear contraction semigroup on the Banach space 
$L^1(\bT^2\times\bS^1\times\bR_+\times[-1,1])$. It satisfies in particular the following properties:

\smallskip
\noindent
1) if $F^{in}\ge 0$ a.e. on $\bT^2\times\bS^1\times\bR_+\times[-1,1]$, then, for each $t\ge 0$, one has
$K_tF^{in}\ge 0$ a.e. on $\bT^2\times\bS^1\times\bR_+\times[-1,1]$;

\smallskip
\noindent
2) for each $t\ge 0$, one has $K_tE=E$;

\smallskip
\noindent
3) if $F^{in}\le CE$ (resp. $F^{in}\ge CE$) a.e. on $\bT^2\times\bS^1\times\bR_+\times[-1,1]$ for some constant $C$, then, for each 
$t\ge 0$, one has $K_tF^{in}\le CE$ (resp. $K_tF^{in}\ge CE$) a.e. on $\bT^2\times\bS^1\times\bR_+\times[-1,1]$;

\smallskip
\noindent
4) for each $F^{in}\in L^1(\bT^2\times\bS^1\times\bR_+\times[-1,1])$ and each $t\ge 0$, one has
$$
\int\!\!\!\iiint_{\bT^2\times\bS^1\times\bR_+\times[-1,1]}K_tF^{in}dxd\om dsdh
=
\int\!\!\!\iiint_{\bT^2\times\bS^1\times\bR_+\times[-1,1]}F^{in}dxd\om dsdh\,;
$$

\smallskip
\noindent
5) if $F^{in}\in C(\bT^2\times\bS^1\times\bR_+\times[-1,1])$ is continuously differentiable with respect to $x$ and $s$, i.e. 
$\grad_xF^{in}$ and $\d_sF^{in}\in C(\bT^2\times\bS^1\times\bR_+\times[-1,1])$, then, for each $t\ge 0$, one has 
$K_tF^{in}$, $\d_tK_tF^{in}$, $\grad_xK_tF^{in}$ and $\d_sK_tF^{in}\in C(\bT^2\times\bS^1\times\bR_+\times[-1,1])$, and 
the function $(t,x,\om,s,h)\mapsto K_tF^{in}(x,\om,s,h)$ is a classical solution of the equation (\ref{EqLim}) on 
$\bR_+^*\times\bT^2\times\bS^1\times\bR_+^*\times[-1,1]$.

\smallskip
All these properties follow from straightforward semigroup arguments once (\ref{EqLim}) is established. Otherwise, the semigroup 
$(K_t)_{t\ge 0}$ is constructed together with the underlying Markov process in section 6 of \cite{MarkloStrom2008} --- see in 
particular Propositions 6.2 and 6.3, formula (6.16) and Theorem 6.4 there.

\subsection{Instability of modulated equilibrium states}
\label{SS-InstabEquilState}
%%%%%%%%%%%%%%%%%%%%%%%%%%%%%%%%%%%%%%%%%%%%%%%%%%%%%%%%%%%%%%%%%%%

A well-known feature of the kinetic theory for monatomic gases is that generically, local equilibrium distribution functions --- i.e. 
distribution functions that are Maxwellian in the velocity variable and whose pressure, bulk velocity and temperature may 
depend on the time and space variables --- are solutions of the Boltzmann equation if and only if they are uniform equilibrium
distribution functions --- i.e. independent of the time and space variables. In other words, the class of local Maxwellian states
is generically unstable under the dynamics of the Boltzmann equation. An obvious consequence of this observation is that
rarefied gas flows are generically too complex to be described by only the macroscopic fields used in classical gas dynamics
--- i.e. by local Maxwellian distribution functions parametrized by a pressure, temperature and velocity field. 

Equation (\ref{EqLim}) governing the Boltzmann-Grad limit  of the periodic Lorentz gas satisfies the following, analogous
property.

\begin{Thm}\label{T-InstabLocEquil}
Let $F$ be a solution of (\ref{EqLim}) of the form
$$
F(t,x,\om,s,h)=f(t,x,\om)E(s,h)\,,\quad\hbox{ with }f\in C^1(I\times\bT^2\times\bS^1)\,,
$$
where $I$ is any interval of $\bR_+$ with nonempty interior. Then $f$ is a constant.
\end{Thm}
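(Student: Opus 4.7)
The plan is to substitute $F(t,x,\om,s,h) = f(t,x,\om) E(s,h)$ into (\ref{EqLim}) and extract two independent moment identities in $(s,h)$ that together will force $f$ to be constant in $\om$; the free-transport condition then forces constancy in $(t,x)$ as well.

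First I would perform the substitution. Using $E(0,h) = 1$ from Theorem \ref{T-PropEquil} and the identity $\d_s E(s,h) = -2 \int_{-1}^1 P(2s,h|h') dh'$ (obtained by differentiating the definition of $E$), equation (\ref{EqLim}) reduces to
\[
E(s,h)\, X(t,x,\om) = 2 \int_{-1}^1 P(2s,h|h')\, Y(t,x,\om;h')\, dh'\,,
\]
with $X := (\d_t + \om \cdot \grad_x) f$ and $Y(h') := f(t,x,R[\th(h')]\om) - f(t,x,\om)$. From this I would extract two scalar identities. Integrating over $(s,h) \in \bR_+ \times [-1,1]$ and using the normalizations $\iint E\, ds\, dh = 1$ (Theorem \ref{T-PropEquil}) and $\int_0^\infty\!\int_{-1}^1 P(S,h|h')\, dS\, dh = 1$ (Corollary \ref{C-LimTr}) yields $X = \int_{-1}^1 Y(h')\, dh'$. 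Passing instead to $s \to 0^+$ and using that $P(0,h|h') = 3/\pi^2$ on all of $[-1,1]^2$ --- a value read off from formula (\ref{FlaTransitProbaSimpl}) as $S \to 0^+$ on $|h'| \le h$ and extended to the full square by the symmetries in Corollary \ref{C-LimTr} --- yields $X = \tfrac{6}{\pi^2} \int_{-1}^1 Y(h')\, dh'$. Since $6/\pi^2 \neq 1$, these two relations force $\int Y\, dh' = 0$ and $X \equiv 0$.

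Next I would interpret $\int_{-1}^1 Y(h')\, dh' = 0$ as a mean-value identity for $f(t,x,\cdot)$ on $\bS^1$. With $\om = (\cos\a, \sin\a)$ and $g(\b) := f(t,x,(\cos\b, \sin\b))$, the vector $R[\th(h')]\om$ has angle $\b = \a + \pi - 2\arcsin h'$, so $h' = \cos((\b-\a)/2)$ and $dh' = -\tfrac12 \sin((\b-\a)/2)\, d\b$; the relation becomes
\[
\int_0^{2\pi} g(\a + \g)\, \sin(\g/2)\, d\g = 4\, g(\a)\,.
\]
Expanding $g = \sum_n g_n e^{in\a}$ in Fourier series, a direct computation gives $c_n := \int_0^{2\pi} e^{in\g} \sin(\g/2)\, d\g = \tfrac{4}{1-4n^2}$, so the equation reduces to $\tfrac{16n^2}{1-4n^2}\, g_n = 0$ for every $n \in \bZ$. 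Hence $g_n = 0$ for all $n \neq 0$, and $f(t,x,\cdot)$ is constant on $\bS^1$. Combining with $X \equiv 0$, i.e.\ $\d_t f + \om \cdot \grad_x f = 0$ for every $\om \in \bS^1$, varying $\om$ yields $\grad_x f = 0$ and then $\d_t f = 0$, so $f$ is a constant.

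The main technical obstacle I anticipate is justifying the $s \to 0^+$ limit in the integral against $Y$; this should rest on the uniform bound $P(S,h|h') \le 3/\pi^2$ (immediate from the explicit formula) together with the piecewise continuity of $P$ at $S=0$ asserted in Corollary \ref{C-LimTr}, so that dominated convergence applies. The decisive arithmetic point behind the Fourier step is that $c_n = 4$ exactly at $n = 0$, which is precisely what annihilates all non-constant modes and delivers the conclusion.
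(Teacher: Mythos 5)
Your argument is correct and, in two respects, takes a genuinely different route from the paper's; moreover it explicitly supplies a step that the paper's own proof only asserts.

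\emph{How the two proofs obtain} $(\d_t+\om\cdot\grad_x)f=0$ \emph{and} $\int_{-1}^1\big(f(R[\th(h')]\om)-f(\om)\big)\,dh'=0$. The paper first integrates the reduced equation in $h$, then compares the large-$s$ asymptotics of the two sides: the left side decays like $\tfrac{1}{\pi^2 s^2}(\d_t+\om\cdot\grad_x)f$ (statement 3 of Theorem \ref{T-PropEquil}), while the right side decays like $O(s^{-3})$ (estimate (\ref{BoundIntP})), forcing the transport term to vanish; the zero-mean identity is then recovered by integrating in $s$. You instead extract two moments --- the full $(s,h)$-integral, giving $X=\int Y$, and the $s\to 0^+$ trace, giving $X=\tfrac{6}{\pi^2}\int Y$ --- and use $\tfrac{6}{\pi^2}\ne 1$. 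Your route is more elementary: it needs only the normalizations $\iint E=1$, $\iint P\,dS\,dh=1$, and the boundary value $P(0,h|h')=\tfrac{3}{\pi^2}$ read off from (\ref{FlaTransitProbaSimpl}), and does not invoke the asymptotic estimates of $\int_{-1}^1 E(s,h)\,dh$ or $\iint P(S,h|h')\,dh\,dh'$. The dominated-convergence justification of the $s\to 0^+$ limit via the uniform bound $P\le 3/\pi^2$ is correct.

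\emph{From the mean-value identity to constancy in $\om$.} Here your proof does something the paper's does not: it proves that $f(t,x,\cdot)$ is constant on $\bS^1$. The paper passes directly from $\int_{-1}^1\big(f(R[\th(h')]\om)-f(\om)\big)\,dh'=0$ to the notation $f(t,x,\om)=\tfrac12\int_{-1}^1 f(t,x,R[\th(h')]\om)\,dh'=:\phi(t,x)$, i.e.\ it treats this average as manifestly $\om$-independent; as stated this is a gap, since the average $\tfrac12\int f(R[\th(h')]\om)\,dh'$ is \emph{a priori} a function of $\om$. What is actually needed is exactly your Fourier computation: the identity is a convolution equation on $\bS^1$ with kernel $\tfrac14\sin(\gamma/2)$, whose Fourier coefficients are $c_n/4=\tfrac{1}{1-4n^2}$; the eigenvalue is $1$ only for $n=0$, so the fixed-point equation kills all nonconstant modes. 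Your change of variables $h'=\cos(\gamma/2)$, the computation $c_n=\tfrac{4}{1-4n^2}$, and the resulting $g_n\cdot\tfrac{16n^2}{1-4n^2}=0$ are all correct. After that, the conclusion from $(\d_t+\om\cdot\grad_x)\phi=0$ by varying $\om$ matches the paper. In short: your proof is a valid alternative, slightly more elementary in the first half, and closes a gap in the second half that the paper leaves implicit.
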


Thus, the complexity of the equation (\ref{EqLim}) posed in the extended phase space $\bT^2\times\bS^1\times\bR_+\times[-1,1]$
cannot be reduced by postulating that the solution is a local equilibrium, whose additional variables $s$ and $h$ can be averaged
out.

As in the case of the classical kinetic theory of gases, this observation is important in the discussion of the long time limit of solutions
of (\ref{EqLim}). 

\subsection{H Theorem and a priori estimates}
\label{SS-Apriori}
%%%%%%%%%%%%%%%%%%%%%%%%%%%%%%%%%%%%%%%%%%%%%%%%%%%%%%%%%%%%%%%%%%%

In this section, we propose a formal derivation of a class of a priori estimates that includes an analogue of Boltzmann's H Theorem
in the kinetic theory of gases.

Let $\bh$ be a convex $C^1$ function defined on $\bR_+$; consider the relative entropy 
$$
\ba
{}&H_\bh(fE|E):=
\\
&\int\!\!\!\iint_{\bT^2\times\bS^1\times\bR_+\times[-1,1]}\left(\bh(f)-\bh(1)-\bh'(1)(f-1)\right)(t,x,\om,s,h)E(s,h)dxd\om dsdh\,.
\ea
$$

The most classical instance of such a relative entropy corresponds with the choice $\bh(z)=z\ln z$: in that case $\bh(1)=0$ while 
$\bh'(1)=1$, so that
$$
H_{z\ln z}(fE|E)=\int\!\!\!\iint_{\bT^2\times\bS^1\times\bR_+\times[-1,1]}\left(f\ln f\!-\!f\!+\!1\right)(t,x,\om,s,h)E(s,h)dxd\om dsdh\,.
$$

\begin{Thm}\lb{T-HThm}
Let $F\equiv F(t,x,\om,s,h)$ in $C^1(\bR_+\times\bT^2\times\bS^1\times\bR_+\times[-1,1])$ be such that
$$
0\le F(t,x,\om,s,h)\le CE(s,h)\,,\quad  (t,x,\om,s,h)\in\bR_+\times\bT^2\times\bS^1\times\bR_+\times[-1,1]\,,
$$
and
$$
(\d_t+\om\cdot\grad_x-\d_s)F(t,x,\omega,s,h)=\int_{-1}^12P(2s,h|h')F(t,x,R[\th(h')]\om,0,h')dh'\,,
$$
with the notations of Theorem \ref{T-LimEq}. Then $H_\bh(F|E)\in C^1(\bR_+)$ and
$$
\frac{d}{dt}H_\bh(F|E)+\int_{\bT^2}D_\bh(F/E)(t,x)dx=0\,,
$$
where the entropy dissipation rate $D_\bh$ is given by the formula
$$
\ba
{}&D_\bh(f)(t,x)=
\\
&\int\!\!\!\iiint_{\bS^1\times\bR_+\times[-1,1]\times[-1,1]}\!2P(2s,h|h')\Big(\bh(f(t,x,R[\th(h')]\om,0,h'))\!-\!\bh(f(t,x,\om,s,h))
\\
&\qquad\qquad-\bh'(f(t,x,\om,s,h)\left(f(t,x,R[\theta(h')]\om,0,h')-f(t,x,\om,s,h)\right)\Big)d\om dsdhdh'\,.
\ea
$$
\end{Thm}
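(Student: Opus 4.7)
The strategy is to introduce the density $f := F/E$ relative to equilibrium, and to set $\bh^*(z):=\bh(z)-\bh(1)-\bh'(1)(z-1)$; this is convex with $\bh^*(1)=\bh^{*\prime}(1)=0$, and
$$
H_\bh(F|E) = \int_{\bT^2\times\bS^1\times\bR_+\times[-1,1]} \bh^*(f)\,E\,dx\,d\om\,ds\,dh.
$$
Differentiating the defining integral for $E$ in Theorem \ref{T-PropEquil} yields the equilibrium relation $-\d_s E(s,h) = 2\int_{-1}^1 P(2s,h|h')\,dh'$, which, together with $E(0,h)=1$, allows me --- upon writing $F=fE$ in (\ref{EqLim}) and dividing by $E$ --- to obtain the convenient form
$$
(\d_t+\om\cdot\grad_x-\d_s)f = \frac{1}{E(s,h)}\int_{-1}^1 2P(2s,h|h')\bigl[f(t,x,R[\th(h')]\om,0,h')-f(t,x,\om,s,h)\bigr]dh'.
$$

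Next I differentiate $H_\bh$ under the integral sign and substitute this equation for $\d_t f$; the three resulting integrals are handled as follows. The transport contribution $-\int E\,\om\cdot\grad_x\bh^*(f)$ vanishes by periodicity on $\bT^2$ since $\grad_x E=0$. The $s$-derivative contribution $\int E\,\d_s\bh^*(f)$ is integrated by parts in $s$; the boundary term at $s=\infty$ vanishes by the decay $E(s,h)\to 0$ from Theorem \ref{T-PropEquil} part 3, the boundary term at $s=0$ equals $-\int \bh^*(f(t,x,\om,0,h))\,dx\,d\om\,dh$ (using $E(0,h)=1$), and the interior contribution $-\int \bh^*(f)\d_s E$, upon inserting the formula for $-\d_s E$, produces $\int 2P(2s,h|h')\,\bh^*(f)\,dx\,d\om\,ds\,dh\,dh'$. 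The collision contribution is kept as $\int \bh^{*\prime}(f)\,2P(2s,h|h')\bigl[f(t,x,R[\th(h')]\om,0,h')-f\bigr]dx\,d\om\,ds\,dh\,dh'$.

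I then match the sum of these three with $-\int_{\bT^2}D_\bh(F/E)\,dx$. Since $\bh(b)-\bh(a)-\bh'(a)(b-a)=\bh^*(b)-\bh^*(a)-\bh^{*\prime}(a)(b-a)$, expanding $-\int D_\bh\,dx$ shows that the bulk $\bh^*(f)$ term and the collision gain-loss term match those produced above, so only the identity
$$
\int \bh^*(f(t,x,\om,0,h))\,dx\,d\om\,dh = \int 2P(2s,h|h')\,\bh^*(f(t,x,R[\th(h')]\om,0,h'))\,dx\,d\om\,ds\,dh\,dh'
$$
remains to be established. I prove it by performing, for each fixed $h'$, the rotation change of variable $\om\mapsto R[-\th(h')]\om$, which preserves Lebesgue measure on $\bS^1$; $\bh^*$ then no longer depends on $s$ or $h$, and applying the marginal identity $\int_0^\infty\int_{-1}^1 2P(2s,h|h')\,ds\,dh=1$ from (\ref{Int-Pdsdh=1}), followed by the renaming $h'\to h$, gives the left-hand side.

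The main technical obstacle is the rigorous justification of these manipulations under the standing hypotheses $F\in C^1$ and $0\le F\le CE$: differentiation under the integral, Fubini, and the $s$-integration by parts with vanishing boundary at infinity. These rest on the fact that $f$ takes values in the bounded interval $[0,C]$ so that $\bh^*$ and $\bh^{*\prime}$ remain bounded on its range, on the bounds (\ref{BoundP})--(\ref{BoundIntP}) and the normalization $\int E\,ds\,dh=1$ from Theorem \ref{T-PropEquil} part 1 to ensure absolute integrability, and on $E$ being positive a.e. so that $f=F/E$ is well-defined. Once the identity $dH_\bh/dt+\int D_\bh\,dx=0$ is proved, the H theorem proper --- namely that $t\mapsto H_\bh(F|E)$ is nonincreasing --- follows from convexity of $\bh$, which makes the integrand of $D_\bh$ pointwise nonnegative.
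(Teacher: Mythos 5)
Your proof is correct and follows essentially the same path as the paper's: write $F=fE$, use $-\d_sE(s,h)=\int_{-1}^1 2P(2s,h|h')\,dh'$ and $E(0,h)=1$, integrate by parts in $s$, and close the identity via the marginal $\int_0^\infty\!\int_{-1}^1 2P(2s,h|h')\,ds\,dh=1$ together with the rotation substitution $\om\mapsto R[\th(h')]\om$. The only real difference is cosmetic --- you introduce $\bh^*(z)=\bh(z)-\bh(1)-\bh'(1)(z-1)$ at the outset, which makes the cancellation of the $\bh'(1)$-terms automatic rather than tracked explicitly as in the paper, a modest streamlining of the same computation.
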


Integrating the equality above over $[0,t]$, one has
$$
H_\bh(F|E)(t)+\int_0^t\int_{\bT^2}D_\bh(F/E)(\tau,x)dxd\tau=H_\bh(F|E)(0)
$$
for each $t\ge 0$. Since $\bh$ is convex, one has 
$$
H_\bh(F|E)\ge 0\hbox{ and }D_\bh(F/E)\ge 0\,,
$$
and the equality above entails the a priori estimates
$$
\left\{
\ba
{}&0\le H_\bh(F|E)(t)\le H_\bh(F|E)(0)\,,
\\
\\
&\int_0^{+\infty}\int_{\bT^2}D_\bh(F/E)(t,x)dxdt\le H_\bh(F|E)(0)\,.
\ea
\right.
$$

That $H_\bh(F|E)$ is a nonincreasing function of time is a general property of Markov processes; see for instance Yosida
\cite{YosidaFuncAn} on p. 392.

\subsection{Long time limit}
\label{SS-LongTime}
%%%%%%%%%%%%%%%%%%%%%%%%%%%%%%%%%%%%%%%%%%%%%%%%%%%%%%%%%%%%%%%%%%%

As an application of the analogue of Boltzmann's H Theorem presented in the previous section, we investigate the asymptotic
behavior of solutions of (\ref{EqLim}) in the limit as $t\to+\infty$.

\begin{Thm}\lb{T-LongTime}
Let $f^{in}\equiv f^{in}(x,\om)\in L^\infty(\bT^2\times\bS^1)$ satisfy $f^{in}(x,\om)\ge 0$ a.e. in $(x,\om)\in\bT^2\times\bS^1$.
Let $F$ be the solution of the Cauchy problem (\ref{EqLim}). Then
$$
F(t,\cdot,\cdot,\cdot,\cdot)\wto CE
$$
in $L^\infty(\bT^2\times\bS^1\times\bR_+\times[-1,1])$ weak-*, with
$$
C=\tfrac1{2\pi}\iint_{\bT^2\times\bS^1}f^{in}(x,\om)dxd\om\,.
$$
\end{Thm}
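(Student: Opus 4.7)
The plan is to combine the H theorem (Theorem \ref{T-HThm}) with weak-* compactness in order to show that every weak-* cluster point of $F(t,\cdot)$ as $t\to+\infty$ is a scalar multiple of $E$, and then to identify the scalar via mass conservation (property 4 of $(K_t)$). By density and the $L^1$-contraction property of $(K_t)$, it suffices to treat the case $f^{in}\in C^1(\bT^2\times\bS^1)$, in which Theorem \ref{T-HThm} applies directly; the extension to general $f^{in}\in L^\infty$ then follows by testing against $\phi\in L^1\cap L^\infty$ and using $\|K_tF^{in}-K_tF^{in}_\eps\|_{L^1}\le\|F^{in}-F^{in}_\eps\|_{L^1}$ uniformly in $t$. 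Property (3) lifts $0\le f^{in}\le\|f^{in}\|_\infty$ to $0\le F(t,\cdot)\le\|f^{in}\|_\infty E$ for all $t\ge 0$, so $\{F(t,\cdot)\}_{t\ge 0}$ is bounded in $L^\infty(\bT^2\times\bS^1\times\bR_+\times[-1,1])$ and hence sequentially weak-* precompact.

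Pick $\bh$ strictly convex, say $\bh(z)=(z-1)^2$. Integrating the H theorem on $[0,\infty)$ yields
$$
\int_0^{+\infty}\!\!\int_{\bT^2}D_\bh(F/E)(t,x)\,dx\,dt\le H_\bh(F^{in}|E)<+\infty.
$$
For any sequence $t_n\to+\infty$, the time-translated solutions $F_n(t,\cdot):=F(t+t_n,\cdot)$ solve (\ref{EqLim}), satisfy the same bound, and therefore admit a subsequence $F_n\wto F_\infty$ in $L^\infty(\bR_+\times\bT^2\times\bS^1\times\bR_+\times[-1,1])$ weak-*. Linearity of (\ref{EqLim}) allows passage to the weak-* limit, so $F_\infty$ is again a solution; on the other hand, the displayed bound forces $\int_{t_n}^{t_n+T}\!\int_{\bT^2}D_\bh\,dx\,dt\to 0$ for each $T>0$, and lower semicontinuity of the convex functional $D_\bh$ under weak-* convergence gives $D_\bh(F_\infty/E)\equiv 0$.

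Setting $f_\infty:=F_\infty/E$, strict convexity of $\bh$ forces
$$
f_\infty(t,x,\om,s,h)=f_\infty(t,x,R[\th(h')]\om,0,h')
$$
for a.e.\ $(\om,s,h,h')$ with $P(2s,h|h')>0$. Since the right-hand side is independent of $(s,h)$, $f_\infty$ is independent of $(s,h)$; since the resulting identity $f_\infty(t,x,\om)=f_\infty(t,x,R[\th(h')]\om)$ must hold for a.e.\ $h'\in[-1,1]$ and the rotations $\{R[\th(h')]\}_{h'\in[-1,1]}$ sweep all of $SO(2)$ (as $\th(h')=\pi-2\arcsin(h')$ covers $[0,2\pi]$), $f_\infty$ is also independent of $\om$. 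Writing $F_\infty=g(t,x)E(s,h)$, the identity $-\d_sE(s,h)=\int_{-1}^12P(2s,h|h')\,dh'$ together with $E(0,h')=1$ makes the collision term in (\ref{EqLim}) cancel $-\d_sF_\infty$ exactly, leaving $(\d_t+\om\cdot\grad_x)g=0$; $\om$-independence then forces $\d_tg=0$ and $\grad_xg=0$, so $g$ is a constant $C$. Mass conservation identifies $C$ via
$$
2\pi C=\iiiint_{\bT^2\times\bS^1\times\bR_+\times[-1,1]}F_\infty\,dx\,d\om\,ds\,dh=\iiiint F^{in}\,dx\,d\om\,ds\,dh=\iint_{\bT^2\times\bS^1}f^{in}\,dx\,d\om,
$$
using $\iint E\,ds\,dh=1$ from Theorem \ref{T-PropEquil}. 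Since every subsequential weak-* limit equals $CE$, the full family converges.

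The main obstacle is the classification step: rigorously passing from $D_\bh(f_\infty)\equiv 0$ to $(s,h,\om)$-independence of $f_\infty$. This requires checking that the positivity set of $P(\cdot,\cdot|h')$ has enough ``reach'' in $(s,h)$ for each $h'$ to collapse the $(s,h)$-dependence, and that $\{R[\th(h')]\}_{h'\in[-1,1]}$ acts transitively on $\bS^1$; both points reduce to the explicit formula (\ref{FlaTransitProbaSimpl}) and elementary trigonometry. A secondary technical point is ensuring that $F_\infty$ inherits the equation from the weak-* limit in the presence of the $-\d_s$ and collision terms, which is standard since (\ref{EqLim}) is linear with bounded coefficients.
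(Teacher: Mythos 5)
Your proposal follows essentially the same H-theorem-plus-weak-compactness strategy as the paper, but with a genuinely different classification step. After vanishing dissipation, you argue directly that $f_\infty := F_\infty/E$ is independent of $(s,h)$ and then — because $\th(h')=\pi-2\arcsin(h')$ sweeps all of $[0,2\pi]$ as $h'$ runs over $[-1,1]$ — also independent of $\om$, whereupon the transport equation gives $\d_tg=\grad_xg=0$. The paper instead averages the relation in $h'$ to obtain $K_tF=f(t,x,\om)E(s,h)$ and then invokes Theorem \ref{T-InstabLocEquil} (whose proof uses the asymptotic tail $\int E(s,h)\,dh\sim 1/(\pi^2s^2)$ together with the decay of $\iint P\,dh\,dh'$) to force $f$ constant. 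Both routes are valid; yours is more self-contained, but it does require the analysis of the positivity set of $P(2s,h|h')$ that you flag — the paper's averaging step and its auxiliary theorem sidestep that.

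Two issues you label as technical asides deserve to be treated as genuine steps, because your framing misidentifies the relevant tools. (i) Passing the boundary trace $F(\cdot,\cdot,\cdot,0,\cdot)$ to the weak-* limit — needed both for $F_\infty$ to satisfy (\ref{EqLim}) and for the lower semicontinuity of $D_\bh$ (which is a quadratic functional of the pair $(f,f|_{s=0})$, not of $f$ alone) — requires the $L^\infty$ trace theorem for the advection operator $\d_t+\om\cdot\grad_x-\d_s$; the paper cites \cite{Cessenat1984} and derives the needed bound on $(\d_t+\om\cdot\grad_x-\d_s)K_tF$ from (\ref{BoundP}). Saying this is ``standard since (\ref{EqLim}) is linear with bounded coefficients'' is not the right description of the obstacle. (ii) Your weak-* limit $F_\infty$ lives in $L^\infty$ of the time-extended phase space, so you still need to convert $F_n\wto CE$ in $L^\infty([0,T]\times\bT^2\times\bS^1\times\bR_+\times[-1,1])$ weak-* into convergence of $F(t_n,\cdot)$ at fixed times; the paper avoids this by sampling at integer times $j$, proving $K_jF_0\wto CE$, and then replacing $F_0$ with $K_\tau F_0$ for $\tau\in[0,1]$. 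Finally, your density step via the $L^1$-contraction of $(K_t)$ (which follows from positivity preservation and mass conservation) is a fine alternative to the paper's argument using commutativity of $K_t$ with $x$-translations.
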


\subsection{Speed of approach to equilibrium}
\label{SS-SpeedEquil}
%%%%%%%%%%%%%%%%%%%%%%%%%%%%%%%%%%%%%%%%%%%%%%%%%%%%%%%%%%%%%%%%%%%

The convergence to equilibrium in the long time limit established in the previous section may seem rather unsatisfying. Indeed, in most
cases, solutions of linear kinetic models converge to equilibrium in a strong $L^2$ topology, and often satisfy some exponential decay
estimate. 

While the convergence result in Theorem \ref{T-LongTime} might conceivably be improved, the following result rules out the possibility 
of a return to equilibrium at exponential speed in the strong $L^2$ sense.

\begin{Thm}\lb{T-NoSpecGap}
There does not exist any function $\Phi\equiv\Phi(t)$ satisfying 
$$
\Phi(t)=o(t^{-3/2})\quad\hbox{ as }t\to+\infty
$$ 
such that, for each $f^{in}\in L^2(\bT^2\times\bS^1)$, the solution $F$ of the Cauchy problem (\ref{EqLim}) satisfies the bound
\be
\lb{ExpEst}
\ba
\left\|F(t,\cdot,\cdot,\cdot,\cdot)-\la f^{in}\ra E\right\|&_{L^2(\bT^2\times\bS^1\times\bR_+\times[-1,1])}
\\
&\le\Phi(t)\|F(0,\cdot,\cdot,\cdot,\cdot)\|_{L^2(\bT^2\times\bS^1\times\bR_+\times[-1,1])}
\ea
\ee
for each $t\ge 0$, with the notation
$$
\la\phi\ra=\tfrac1{2\pi}\iint_{\bT^2\times\bS^1}\phi(x,\om)dxd\om
$$
for each $\phi\in L^1(\bT^2\times\bS^1)$.
\end{Thm}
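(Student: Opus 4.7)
The plan is to argue by contradiction: suppose some $\Phi(t)=o(t^{-3/2})$ satisfies (\ref{ExpEst}) for every $f^{in}\in L^2(\bT^2\times\bS^1)$. By linearity of (\ref{EqLim}) and since $E$ is stationary under $(K_t)$ (property 2 of Section \ref{SS-EquilState}), replacing $f^{in}$ by $f^{in}-\langle f^{in}\rangle$ reduces everything to the case $\langle f^{in}\rangle=0$, in which the hypothesis becomes
\[
\|F(t)\|_{L^2(\bT^2\times\bS^1\times\bR_+\times[-1,1])}\le\Phi(t)\|f^{in}\|_{L^2(\bT^2\times\bS^1)}\|E\|_{L^2(\bR_+\times[-1,1])}.
\]
It is then enough to exhibit a single $f^{in}$ with $\langle f^{in}\rangle=0$ and a sequence $t_n\to+\infty$ along which $\|F(t_n)\|_{L^2}\gtrsim t_n^{-3/2}\|f^{in}\|_{L^2}\|E\|_{L^2}$.

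The conceptual input is the heavy tail of the free-path-length distribution. From Theorem \ref{T-PropEquil}, $\int_{-1}^1 E(s,h)\,dh\sim(\pi s)^{-2}$, and Cauchy--Schwarz gives $\int_{-1}^1 E(s,h)^2\,dh\gtrsim s^{-4}$, hence $\int_t^{+\infty}\int_{-1}^1 E^2\,dh\,ds\gtrsim t^{-3}$. Integrating (\ref{EqLim}) along the characteristics of the transport operator $\d_t+\om\cdot\grad_x-\d_s$ yields Duhamel's formula
\[
F(t,x,\om,s,h)=F_0(t,x,\om,s,h)+\int_0^t(SF)(\tau,x-(t-\tau)\om,\om,s+(t-\tau),h)\,d\tau,
\]
with $F_0(t,x,\om,s,h):=f^{in}(x-t\om,\om)E(s+t,h)$ and collision source $(SF)(\tau,y,\om,\sigma,h):=\int_{-1}^1 2P(2\sigma,h|h')F(\tau,y,R[\th(h')]\om,0,h')\,dh'$. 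Setting $F_c:=F-F_0$, Fubini's theorem gives
\[
\|F_0(t)\|_{L^2}^2=\|f^{in}\|_{L^2}^2\int_t^{+\infty}\!\!\int_{-1}^1 E(s,h)^2\,dh\,ds\gtrsim\|f^{in}\|_{L^2}^2\,t^{-3},
\]
so the free-streaming contribution alone meets the $t^{-3/2}$ barrier in $L^2$.

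To transfer this lower bound from $F_0$ to the full $F$, I would take $f^{in}(x,\om):=e^{2\pi ik\cdot x}$ with $k\in\bZ^2\setminus\{0\}$ (or its real part), so that by translation invariance $F(t,x,\om,s,h)=e^{2\pi ik\cdot x}G_k(t,\om,s,h)$ with $G_k$ governed by the scalar equation $(\d_t+2\pi ik\cdot\om-\d_s)G_k=SG_k$, $G_k(0)=E$, and $\|F(t)\|_{L^2_{x\om sh}}=\|G_k(t)\|_{L^2_{\om sh}}$. The free-streaming part $G_{k,0}(t,\om,s,h)=e^{-2\pi ik\cdot\om t}E(s+t,h)$ has modulus $E(s+t,h)$, whereas the collision correction
\[
G_{k,c}(t,\om,s,h)=\int_0^t e^{-2\pi ik\cdot\om(t-\tau)}(SG_k)(\tau,\om,s+(t-\tau),h)\,d\tau
\]
carries an $\om$-oscillating phase factor absent from $G_{k,0}$. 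By the reverse triangle inequality $\|G_k(t)\|_{L^2}\ge\|G_{k,0}(t)\|_{L^2}-\|G_{k,c}(t)\|_{L^2}$, it suffices to establish $\|G_{k,c}(t)\|_{L^2}\le\tfrac12\|G_{k,0}(t)\|_{L^2}$ along a suitable diagonal $(k_n,t_n)$ with $|k_n|\to+\infty$, which should follow from Riemann--Lebesgue-type cancellation in $\om$. The main obstacle is making this oscillatory-integral cancellation quantitative: iterating Duhamel produces a Neumann expansion of $G_{k,c}$ whose $n$-th term involves $n$ nested integrations with rotations $R[\th(h'_j)]$ composed inside the phase, and one must control each term uniformly in $t$ using the kernel estimates of Corollary \ref{C-LimTr} --- especially $P(S,h|h')\le 6/(\pi^2 S)\indc_{1+h'<2/S}$ and $\int\!\!\int P\,dh\,dh'\le 48/(\pi^2 S^3)$ --- both to ensure absolute convergence of the Neumann series and to quantify the decay of the successive oscillatory contributions as $|k|\to+\infty$. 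Once this bound is available, the contradiction with $\Phi(t)=o(t^{-3/2})$ is immediate.
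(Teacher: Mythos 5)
Your strategy is genuinely different from the paper's, but it has an essential gap that you acknowledge yourself. The paper avoids any oscillatory or cancellation argument: it takes \emph{nonnegative} data $f^{in}\ge 0$, observes that then $F\ge 0$, hence the collision source on the right of \eqref{EqLim} is nonnegative, and therefore $F\ge G$ pointwise a.e., where $G(t,x,\om,s,h)=f^{in}(x-t\om,\om)E(s+t,h)$ is the pure free-transport solution. That one-sided comparison immediately gives $\|F(t)\|_{L^2}\ge\|G(t)\|_{L^2}=\|f^{in}\|_{L^2}\bigl(\int_t^\infty\!\int_{-1}^1E^2\bigr)^{1/2}$ with no need to control the Duhamel correction at all. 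The bothersome term $\la f^{in}\ra$ in \eqref{ExpEst} is then dispatched not by choosing mean-zero data but by letting $f^{in}=\rho_\eps$ concentrate, so that $\la f^{in}\ra/\|f^{in}\|_{L^2}\sim\eps\to 0$, and the contradiction follows from the $\sim(\pi^2 s^2)^{-1}$ tail of $\int_{-1}^1 E(s,h)\,dh$ in Theorem \ref{T-PropEquil}.

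The gap in your version is precisely the step you flag: the inequality $\|G_{k,c}(t)\|_{L^2}\le\tfrac12\|G_{k,0}(t)\|_{L^2}$ along some diagonal $(k_n,t_n)$ is asserted, not established, and the heuristic behind it is shakier than it looks. A phase $e^{-2\pi ik\cdot\om(t-\tau)}$ does not reduce an $L^2_\om$ norm by itself; the cancellation must come from the $\tau$-integration, and then the Neumann iterates of $SG_k$ themselves depend on $k$ through $G_k$, so the oscillatory gain has to be propagated through the whole series. Worse, the Duhamel source involves the boundary trace $G_k(\tau,\cdot,0,h')$, and controlling its size in a way that makes $G_{k,c}$ decay at least as fast as $t^{-3/2}$ is essentially asking for a quantitative decay of the full solution --- which is close to the very statement you are trying to disprove. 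So this is not a routine Riemann--Lebesgue step that can be waved through; it would require a new argument, and the paper sidesteps the whole difficulty by the positivity comparison. Replacing the oscillation mechanism with the comparison principle and the mean-zero reduction with concentration would turn your sketch into a complete proof.
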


By the same argument as in the proof of Theorem \ref{T-NoSpecGap}, one can establish a similar result for initial data in 
$L^p(\bT^2\times\bS^1)$, with the $L^2$ norm replaced with the $L^p$ norm in (\ref{ExpEst}), for all $p\in]1,\infty[$; in that case 
$\Phi(t)=o(t^{-(2p-1)/p})$ is excluded.

The case $p=2$ discussed in the theorem excludes the possibility of a spectral gap for the generator of the semigroup $K_t$ associated 
with equation (\ref{EqLim}) --- that is to say, for the unbounded operator $\cA$ on $L^2(\bT^2\times\bS^1\times\bR_+\times[-1,1])$ defined 
by
$$
\cA f(x,\om,s,h)=(\om\cdot\grad_x-\d_s)f(x,\om,s,h)-\int_{-1}^12P(2s,h|h')f(x,R[\th(h')]\om,0,h')dh'\,,
$$
with domain
$$
D(\cA)=\{f\in L^2(\bT^2\times\bS^1\times\bR_+\times[-1,1])\,|\,(\om\cdot\grad_x-\d_s)f\in L^2(\bT^2\times\bS^1\times\bR_+\times[-1,1])\}\,.
$$

%%%%%%%%%%%%%%%%%%%%%%%%%%%%%%%%%%%%%%%%%%%%%%%%%%%%%%%%%%%%%%%%%%%

\section{An ergodic theorem with continued fractions}
\label{S-ErgoContFrac}
%%%%%%%%%%%%%%%%%%%%%%%%%%%%%%%%%%%%%%%%%%%%%%%%%%%%%%%%%%%%%%%%%%%

\subsection{Continued fractions}
\label{SS-ContFrac}
%%%%%%%%%%%%%%%%%%%%%%%%%%%%%%%%%%%%%%%%%%%%%%%%%%%%%%%%%%%%%%%%%%%

Let $\a\in(0,1)\setminus\bQ$; its continued fraction expansion is denoted
\begin{equation}
\label{ContFrac-alpha}
\a=[0;a_1,a_2,a_3,\ldots]=\frac1{\displaystyle a_1+\frac1{\displaystyle a_2+\frac1{a_3+\ldots}}}\,.
\end{equation}
Consider the Gauss map
\begin{equation}
\label{GaussMap}
T:\,(0,1)\setminus\bQ\ni x\mapsto\frac1x-\left[\frac1x\right]\in(0,1)\in(0,1)\setminus\bQ\,.
\end{equation}
The positive integers $a_1,a_2,a_3,\ldots$ are expressed in terms of $\a$ as
\begin{equation}
\label{Fla-an}
a_1=\left[\frac1{\a}\right]\,,\quad\hbox{ and }\quad a_n=\left[\frac1{T^{n-1}\a}\right]\,,\quad n\ge 1\,.
\end{equation}
The action of $T$ on $\a$ is most easily read on its continued fraction expansion:
\begin{equation}
\label{T-ContFrac}
T[0;a_1,a_2,a_3,\ldots]=[0;a_2,a_3,a_4,\ldots]\,.
\end{equation}
We further define two sequences of integers $(p_n)_{n\ge 0}$ and $(q_n)_{n\ge 0}$ by the following induction procedure:
\begin{equation}
\label{Induct-pnqn}
\ba
p_{n+1}&=a_np_n+p_{n-1}\,,\quad &&p_0=1\,,\,\,\,&&p_1=0\,,
\\
q_{n+1}&=a_nq_n+q_{n-1}\,,\quad &&q_0=0\,,\,\,\,&&q_1=1\,.
\ea
\end{equation}
The sequence of rationals $(\frac{p_n}{q_n})_{n\ge 1}$ converges to $\a$ as $n\to\infty$. Rather than the usual distance
$|\frac{p_n}{q_n}-\a|$, it is more convenient to consider
\begin{equation}
\label{Def-dn}
d_n:=|q_n\a-p_n|=(-1)^{n-1}(q_n\alpha-p_n)\,,\quad n\ge 0\,.
\end{equation}
Obviously
\begin{equation}
\label{Induct-dn}
d_{n+1}=-a_nd_n+d_{n-1}\,,\quad d_0=1\,,\,\,d_1=\a\,.
\end{equation}
We shall use the notation
$$
a_n(\a)\,,\,\,\,p_n(\a)\,,\,\,\,q_n(\a)\,,\,\,\,d_n(\a)\,,
$$
whenever we need to keep track of the dependence of those quantities upon $\a$.  For each $\a\in(0,1)\setminus\bQ$, one has the 
relation $a_n(T\a)=a_{n+1}(\a)$, which follows from (\ref{T-ContFrac}) and implies in turn that $\a d_n(T\a)=d_{n+1}(\a)$, for each 
integer $n\ge 0$, by (\ref{Induct-dn}). Therefore,
\begin{equation}
\label{Fla-dn}
d_n(\a)=\prod_{k=0}^{n-1}T^k\a\,,\quad n\ge 0\,.
\end{equation}
While $a_n(\a)$ and $d_n(\a)$ are easily expressed in terms of the sequence $(T^k\a)_{k\ge 0}$, the analogous expression for 
$q_n(\a)$ is somewhat more involved. With (\ref{Induct-pnqn}) and (\ref{Induct-dn}), one proves by induction that
\begin{equation}
\label{qndn}
q_n(\a)d_{n+1}(\a)+q_{n+1}(\a)d_n(\a)=1\,,\quad n\ge 0\,.
\end{equation}
Hence
$$
q_{n+1}(\a)d_n(\a)=1-\frac{d_{n+1}(\a)d_n(\a)}{d_n(\a)d_{n-1}(\a)}q_n(\a)d_{n-1}(\a)\,,
$$
so that, by a straightforward induction
\begin{equation}
\label{Fla-qn}
q_{n+1}(\a)d_n(\a)=\sum_{j=0}^n(-1)^{n-j}\frac{d_{n+1}(\a)d_n(\a)}{d_{j+1}(\a)d_j(\a)}\,,\quad n\ge 0\,.
\end{equation}
Replacing $(d_k(\a))_{0\le k\le n+1}$ by its expression (\ref{Fla-dn}) leads to an expression of $q_{n+1}(\a)$ in terms of $T^k\a$ for 
$0\le k\le n$.

\subsection{The ergodic theorem}
\label{SS-Ergodic}
%%%%%%%%%%%%%%%%%%%%%%%%%%%%%%%%%%%%%%%%%%%%%%%%%%%%%%%%%%%%%%%%%%%

We recall that the Borel probability measure $dG(x)=\tfrac1{\ln 2}\frac{dx}{1+x}$ on $(0,1)$ is invariant under the Gauss map $T$, and 
that $T$ is ergodic for the measure $dG(x)$ (see for instance \cite{Khinchin1964}), and even strongly mixing (see \cite{Philipp1967}.)

For each $\a\in(0,1)\setminus\bQ$ and $\eps\in(0,1]$, define
\begin{equation}
\label{Def-N}
N(\a,\eps)=\inf\{n\ge 0\,|\,d_n(\a)\le\eps\}\,.
\end{equation}

\begin{Lem}
\label{L-EquivN}
For a.e. $\a\in(0,1)$, one has
$$
N(\a,\eps)\sim\tfrac{12\ln 2}{\pi^2}\ln\frac1\eps\,,\quad\eps\to 0^+\,.
$$
\end{Lem}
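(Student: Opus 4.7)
The plan is to convert the statement into an asymptotic for $d_n(\alpha)$ via Birkhoff's ergodic theorem applied to the Gauss map, and then invert this asymptotic to obtain the claimed behavior for $N(\alpha,\epsilon)$.

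First I would exploit formula (\ref{Fla-dn}), which gives the product representation $d_n(\alpha) = \prod_{k=0}^{n-1} T^k \alpha$, so that
\[
\ln \frac{1}{d_n(\alpha)} = -\sum_{k=0}^{n-1} \ln(T^k \alpha).
\]
Since the Gauss measure $dG(x) = \tfrac{1}{\ln 2}\tfrac{dx}{1+x}$ is $T$-invariant and $T$ is ergodic with respect to $dG$, and since $\ln x \in L^1((0,1), dG)$ (the only issue being integrability near $0$, which is easily checked as $\int_0^1 |\ln x|/(1+x)\,dx < \infty$), Birkhoff's ergodic theorem yields, for a.e. $\alpha \in (0,1)$,
\[
\frac{1}{n}\ln\frac{1}{d_n(\alpha)} \;\longrightarrow\; -\int_0^1 \frac{\ln x}{\ln 2}\cdot\frac{dx}{1+x} \quad \text{as } n\to\infty.
\]
The right-hand integral is classical: expanding $\frac{1}{1+x} = \sum (-1)^k x^k$ and integrating term-by-term gives $\int_0^1 \frac{\ln x}{1+x}\,dx = -\frac{\pi^2}{12}$. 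Hence
\[
\frac{1}{n}\ln\frac{1}{d_n(\alpha)} \;\longrightarrow\; \frac{\pi^2}{12 \ln 2} \quad \text{a.e.\ in }\alpha.
\]

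Second, I would invert this statement. The induction relation (\ref{Induct-dn}), combined with the fact that $a_n \geq 1$, easily implies that $(d_n(\alpha))_{n\geq 0}$ is strictly decreasing to $0$ for every irrational $\alpha$; indeed $d_{n+1}/d_n = T^n \alpha \in (0,1)$. Therefore $N(\alpha,\epsilon)$ is well defined for all $\epsilon \in (0,1]$, and by minimality,
\[
d_{N(\alpha,\epsilon)}(\alpha) \leq \epsilon < d_{N(\alpha,\epsilon)-1}(\alpha).
\]
Taking logarithms and dividing by $N = N(\alpha,\epsilon)$,
\[
\frac{1}{N}\ln\frac{1}{d_{N-1}(\alpha)} \;<\; \frac{1}{N}\ln\frac{1}{\epsilon} \;\leq\; \frac{1}{N}\ln\frac{1}{d_N(\alpha)}.
\]
As $\epsilon \to 0^+$, necessarily $N(\alpha,\epsilon) \to \infty$, and writing the left-hand side as $\frac{N-1}{N}\cdot\frac{1}{N-1}\ln\frac{1}{d_{N-1}(\alpha)}$, both the left and right sides converge to $\frac{\pi^2}{12\ln 2}$ by the Birkhoff step above. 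The squeeze then gives
\[
\frac{1}{N(\alpha,\epsilon)}\ln\frac{1}{\epsilon} \;\longrightarrow\; \frac{\pi^2}{12\ln 2},
\]
which is exactly the claim $N(\alpha,\epsilon) \sim \frac{12 \ln 2}{\pi^2} \ln \frac{1}{\epsilon}$.

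There is no serious obstacle here once Birkhoff is applied to the right integrand; the only point requiring minimal care is justifying that $\ln x$ is $dG$-integrable so that Birkhoff applies, and the evaluation of the dilogarithmic integral $\int_0^1 \frac{\ln x}{1+x}\,dx = -\tfrac{\pi^2}{12}$, both of which are standard. The ergodicity (in fact mixing) of the Gauss map with respect to $dG$ is exactly the classical result of Gauss-Kuzmin which the authors have already recalled.
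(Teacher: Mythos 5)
Your proof is correct, and it is the standard argument for this statement (the Khinchin--L\'evy constant result, inverted via the squeeze $d_{N}\le\eps<d_{N-1}$); the paper itself defers the proof to Lemma 3.1 of \cite{CagliotiFG2003}, but the identity $d_n(\a)=\prod_{k=0}^{n-1}T^k\a$ that the authors set up in (\ref{Fla-dn}), together with the recalled ergodicity of the Gauss map, is exactly the preparation for the Birkhoff argument you give, so your route matches the intended one. A minor attributional slip: the ergodicity of $T$ for $dG$ is not the Gauss--Kuzmin theorem (which concerns the asymptotic distribution of the partial quotients), but the paper has already recorded the ergodicity with its own references, so nothing in your proof is affected.
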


See \cite{CagliotiFG2003} (where it is stated as Lemma 3.1) for a proof.

We further define
\begin{equation}
\label{Def-deltan}
\de_n(\a,\eps)=\frac{d_n(\a)}{\eps}\,,\quad n\ge 0\,,
\end{equation}
for each $\a\in(0,1)\setminus\bQ$ and $\eps\in(0,1]$.

\begin{Thm}
\label{T-Ergo}
For $m\ge 0$, let $f$ be a bounded measurable function on $(\bR_+)^{m+1}$; then there exists $L_m(f)\in\bR$ independent of $\a$ 
such that
$$
\frac1{\ln(1/\eta)}\int_\eta^1f(\de_{N(\a,\eps)}(\a),\de_{N(\a,\eps)-1}(\a),\ldots,\de_{N(\a,\eps)-m}(\a))\frac{d\eps}{\eps}\to L_m(f)
$$
and
$$
\frac1{\ln(1/\eta)}\int_\eta^1(-1)^{N(\a,\eps)}f(\de_{N(\a,\eps)}(\a),\ldots,\de_{N(\a,\eps)-m}(\a))\frac{d\eps}{\eps}\to 0
$$
for a.e. $\a\in(0,1)$ as $\eta\to 0^+$.
\end{Thm}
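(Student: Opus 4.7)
The plan is to recast the logarithmic average over $\eps$ as a Birkhoff ergodic average for the Gauss map $T$ on $(0,1)$ with its invariant probability $dG$. The natural starting point is the partition of $(0,1]$ into the layers $\{N(\a,\eps)=n\}=[d_n(\a),d_{n-1}(\a))$. On the $n$th layer, the substitution $u=d_n(\a)/\eps$ sends $d\eps/\eps$ to $-du/u$ and turns each $\de_{n-j}(\a,\eps)$ into $(d_{n-j}/d_n)\,u$. Combined with the identity $d_{k+1}(\a)/d_k(\a)=T^k\a$ coming from (\ref{Fla-dn}), each ratio $d_{n-j}/d_n$ becomes a product of inverses of $T^{n-j}\a,\ldots,T^{n-1}\a$, so the contribution of the $n$th layer equals $g_m(T^{n-m}\a,\ldots,T^{n-1}\a)$ for the single measurable function $g_m:(0,1)^m\to\bR$ given by
\[
g_m(x_0,\ldots,x_{m-1})=\int_{x_{m-1}}^{1} f\!\left(u,\frac{u}{x_{m-1}},\frac{u}{x_{m-2}x_{m-1}},\ldots,\frac{u}{x_0\cdots x_{m-1}}\right)\frac{du}{u}.
\]

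Setting $G_m(\b):=g_m(\b,T\b,\ldots,T^{m-1}\b)$ and writing $N=N(\a,\eta)$, one then obtains
\[
\int_\eta^{1} f(\de_N(\a,\eps),\ldots,\de_{N-m}(\a,\eps))\,\frac{d\eps}{\eps}=\sum_{n=m+1}^{N-1}G_m(T^{n-m}\a)+R(\a,\eta),
\]
where the remainder $R$ bundles a bounded number of initial layers ($n\le m$, total contribution $O(1)$) together with the incomplete final layer $[\eta,d_{N-1})$, whose contribution is bounded by $\|f\|_\infty\ln(d_{N-1}/d_N)\le\|f\|_\infty\ln(1+a_N(\a))$. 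The function $G_m$ lies in $L^1(dG)$ because $f$ is bounded and $\int_0^1\ln(1/\b)\,dG(\b)<\infty$, and $T$ is ergodic with respect to $dG$. Birkhoff's pointwise theorem therefore gives $\frac1N\sum_{k=1}^N G_m(T^k\a)\to\int G_m\,dG$ almost surely. Combined with the equivalence $N(\a,\eta)\sim\frac{12\ln 2}{\pi^2}\ln(1/\eta)$ from Lemma \ref{L-EquivN}, and with the a.s.\ bound $\log a_n(\a)=o(n)$ (a Borel--Cantelli consequence of $G(\{a_n>k\})=O(1/k)$) that forces $R/\ln(1/\eta)\to 0$, this yields the first conclusion with
\[
L_m(f)=\frac{12\ln 2}{\pi^2}\int_0^{1} G_m(\b)\,dG(\b).
\]

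For the signed version, the same decomposition puts a factor $(-1)^n$ in front of the $n$th summand, so the claim reduces to showing $\frac1N\sum_{n=1}^N(-1)^n G_m(T^n\a)\to 0$ almost surely. Since $T$ is strongly mixing with respect to $dG$ by Philipp \cite{Philipp1967}, its iterate $T^2$ is $dG$-ergodic. Applying Birkhoff's theorem to $T^2$ with integrand $G_m$ for the even-indexed subsum, and with integrand $G_m\circ T$ for the odd-indexed subsum, shows that both partial sums normalized by $N/2$ converge almost surely to the same limit $\int G_m\,dG$; their difference divided by $N$ therefore vanishes. The main obstacle is organizational rather than conceptual: once one spots the substitution $u=d_n/\eps$ and the identity $d_{k+1}/d_k=T^k\a$, the ergodic structure of the problem is forced, and the only genuine technical input is the a.s.\ control of the incomplete final layer, which rests on the mild tail behavior of the Gauss measure on continued-fraction digits.
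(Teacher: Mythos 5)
Your proposal is correct and follows essentially the same route as the paper's proof: partition $(\eta,1]$ into the layers $\{N(\a,\eps)=n\}=[d_n(\a),d_{n-1}(\a))$, substitute $u=d_n(\a)/\eps$ on each layer so that the contribution becomes $G_m(T^{n-m}\a)$, apply Birkhoff's ergodic theorem together with Lemma~\ref{L-EquivN}, and for the signed sum apply Birkhoff to $T^2$ (ergodic since $T$ is mixing) and use $T$-invariance of $dG$ so the even and odd subsums cancel. Your $G_m$ agrees (up to a typo in the indices of $F_{m-1}$ in the paper's statement) with the paper's $F_{m-1}$, and your integrability check $|G_m(\b)|\le\|f\|_\infty\ln(1/T^{m-1}\b)\in L^1(dG)$ is exactly what is needed.

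One point where you are actually more careful than the paper: you bound the contribution of the incomplete final layer $\int_\eta^{d_{N(\a,\eta)-1}}$ by $\|f\|_\infty\ln\bigl(d_{N-1}/d_N\bigr)\le\|f\|_\infty\ln\bigl(1+a_{N(\a,\eta)}(\a)\bigr)$ and invoke a Borel--Cantelli argument (using $G(\{a_n>k\})=O(1/k)$) to get $\log a_n(\a)=o(n)$ a.e., hence $o(\ln(1/\eta))$ after dividing. The paper simply absorbs this layer into an ``$O(1)$'' remainder, which is not literally correct since $\ln(1+a_{N(\a,\eta)}(\a))$ is unbounded as $\eta\to 0$; the correct claim is that the remainder is $o(\ln(1/\eta))$ for a.e.\ $\a$, which is precisely what your argument supplies. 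Your treatment of the signed version (splitting into even and odd Birkhoff averages for $T^2$ with integrands $G_m$ and $G_m\circ T$) is a cosmetic variant of the paper's pairing $F_{m-1}(T^{2k-m}\a)-F_{m-1}(T^{2k+1-m}\a)$ and reaches the same conclusion via $\int(G_m-G_m\circ T)\,dG=0$.
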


\begin{proof}
The proof of the first limit is as in \cite{CagliotiFG2003}, and we just sketch it. Write
$$
\ba
\int_\eta^1f(\de_{N(\a,\eps)}(\a),\de_{N(\a,\eps)-1}(\a),\ldots,\de_{N(\a,\eps)-m}(\a))\frac{d\eps}{\eps}
\\
=
\sum_{n=1}^{N(\a,\eps)-1}
	\int_{d_n(\a)}^{d_{n-1}(\a)}f(\de_{N(\a,\eps)}(\a),\de_{N(\a,\eps)-1}(\a),\ldots,\de_{N(\a,\eps)-m}(\a))\frac{d\eps}{\eps}
\\
+
\int_{\eta}^{d_{N(\a,\eps)-1}(\a)}f(\de_{N(\a,\eps)}(\a),\de_{N(\a,\eps)-1}(\a),\ldots,\de_{N(\a,\eps)-m}(\a))\frac{d\eps}{\eps}
\ea
$$
Whenever $d_n(\a)\le\eps<d_{n-1}(\a)$, one has $N(\a,\eps)=n$ so that, for a.e. $\a\in(0,1)$, 
$$
\ba
\int_\eta^1f(\de_{N(\a,\eps)}(\a),\de_{N(\a,\eps)-1}(\a),\ldots,\de_{N(\a,\eps)-m}(\a))\frac{d\eps}{\eps}
\\
=
\sum_{n=1}^{N(\a,\eta)-1}\int_{d_n(\a)}^{d_{n-1}(\a)}f(\de_n(\a),\de_{n-1}(\a),\ldots,\de_{n-m}(\a))\frac{d\eps}{\eps}+O(1)\,.
\ea
$$ 
Substituting $\rho=\frac{d_n(\a)}{\eps}$ in each integral on the right hand side of the identity above, one has, for $n\ge m>1$
$$
\ba
\int_{d_n(\a)}^{d_{n-1}(\a)}f(\de_n(\a),\de_{n-1}(\a),\ldots,\de_{n-m}(\a))\frac{d\eps}{\eps}
\\
=
\int_{d_n(\a)/d_{n-1}(\a)}^1f\left(\rho,\frac{\rho}{T^{n-1}\a},\ldots,\frac{\rho}{\prod_{k=1}^mT^{n-m+k-1}\a}\right)\frac{d\rho}{\rho}
\\
=F_{m-1}(T^{n-m}\a)\,,
\ea
$$
with the notation
$$
F_{m-1}(\a)=\int_{T^{m-1}\a}^1f\left(\rho,\frac{\rho}{T^{m-1}\a},\ldots,\frac{\rho}{\prod_{k=1}^mT^k\a}\right)\frac{d\rho}{\rho}\,.
$$
Thus, for a.e. $\a\in(0,1)$, 
$$
\ba
\frac1{\ln(1/\eta)}\int_\eta^1f(\de_{N(\a,\eps)}(\a),\de_{N(\a,\eps)-1}(\a),\ldots,\de_{N(\a,\eps)-m}(\a))\frac{d\eps}{\eps}
\\
=\frac1{\ln(1/\eta)}\sum_{n=m}^{N(\a,\eta)-1}F_{m-1}(T^{n-m}\a)+O\left(\frac1{\ln(1/\eta)}\right)\,.
\ea
$$
We deduce from Birkhoff's ergodic theorem and Lemma \ref{L-EquivN} that
\begin{equation}
\label{Def-Lm}
\frac1{\ln(1/\eta)}\sum_{n=m}^{N(\a,\eta)-1}F_{m-1}(T^{n-m}\a)\to L_m(f):=\tfrac{12\ln 2}{\pi^2}\int_0^1F_{m-1}(x)dG(x)
\end{equation}
for a.e. $\a\in(0,1)$ as $\eta\to 0^+$, which establishes the first statement in the Theorem.

\smallskip
The proof of the second statement is fairly similar. We start from the identity 
$$
\ba
\int_\eta^1(-1)^{N(\a,\eps)}f(\de_{N(\a,\eps)}(\a),\de_{N(\a,\eps)-1}(\a),\ldots,\de_{N(\a,\eps)-m}(\a))\frac{d\eps}{\eps}&
\\
=
\sum_{n=1}^{N(\a,\eta)-1}(-1)^n\int_{d_n(\a)}^{d_{n-1}(\a)}f(\de_n(\a),\de_{n-1}(\a),\ldots,\de_{n-m}(\a))\frac{d\eps}{\eps}+O(1)&
\\
=\sum_{n=m}^{N(\a,\eta)-1}(-1)^nF_{m-1}(T^{n-m}\a)+O(1)&\,.
\ea
$$
Writing  
$$
\ba
\sum_{n=m}^{N(\a,\eta)-1}(-1)^nF_{m-1}(T^{n-m}\a)
\\
=\sum_{m/2\le k\le(N(\a,\eta)-1)/2}\left(F_{m-1}(T^{2k-m}\a)-F_{m-1}(T^{2k+1-m}\a)\right)+O(1)
\ea
$$
we deduce from Birkhoff's ergodic theorem (applied to $T^2$, which is ergodic since $T$ is mixing, instead of $T$) and Lemma \ref{L-EquivN} 
that, for a.e. $\a\in (0,1)$ and  in the limit as $\eta\to 0^+$,
$$
\ba
\frac1{\ln(1/\eta)}\sum_{m/2\le k\le(N(\a,\eta)-1)/2}\left(F_{m-1}(T^{2k-m}\a)-F_{m-1}(T^{2k+1-m}\a)\right)
\\
\to\tfrac{12\ln 2}{\pi^2}\int_0^1\left(F_{m-1}(x)-F_{m-1}(Tx)\right)dG(x)=0
\ea
$$
since the measure $dG$ is invariant under $T$. 

This entails the second statement in the theorem.
\end{proof}

\subsection{Application to 3-obstacle configurations}
\label{SS-Appli3ObstConfig}
%%%%%%%%%%%%%%%%%%%%%%%%%%%%%%%%%%%%%%%%%%%%%%%%%%%%%%%%%%%%%%%%%%%

Consider $\om\in\bS^1$ such that $0<\om_2<\om_1$ and $\om_2/\om_1\notin\bQ$, and let $r\in(0,\tfrac12)$.

The parameters $(A(\om,r),B(\om,r),Q(\om,r),\si(\om,r))$ defining the 3-obstacle configuration associated with the direction $\om$
and the obstacle radius $r$ are expressed in terms of the continued fraction expansion of $\om_2/\om_1$ in the following manner.

\begin{Prop}
\label{P-FlaABQsi}
For each $\om\in\bS^1$ such that $0<\om_2<\om_1$ and $\om_2/\om_1\notin\bQ$, and each $r\in(0,\tfrac12)$ one has
$$
\ba
A(\om,r)&=1-\frac{d_{N(\a,\eps)}(\a)}{\eps}
\\
B(\om,r)&=1-\frac{d_{N(\a,\eps)-1}(\a)}{\eps}-\left[\frac{\eps-d_{N(\a,\eps)-1}(\a)}{d_{N(\a,\eps)}(\a)}\right]\frac{d_{N(\a,\eps)}(\a)}{\eps}
\\
Q(\om,r)&=\eps q_{N(\a,\eps)}
\\
\si(\om,r)&=(-1)^{N(\a,\eps)}
\ea
$$
where
$$
\a=\frac{\om_2}{\om_1}\,,\quad\hbox{ and }\eps=\frac{2r}{\om_1}\,.
$$
By the definition of $N(\a,\eps)$, one has
$$
0\le A(\om,r)\,,\,\,B(\om,r)\,,\,\,Q(\om,r)\le 1\,.
$$
\end{Prop}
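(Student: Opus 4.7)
The plan is to identify the pair $(q,p),(\bar q,\bar p)$ of Lemma~\ref{L-3obst} explicitly in terms of the continued fraction expansion of $\a=\om_2/\om_1$, and then to read off the formulas for $A,B,Q,\si$ from elementary plane geometry. I would start by rotating the plane so that $\om$ becomes the first coordinate axis; the lattice point $(q',p')\in\bZ^2$ then has transverse coordinate $-\om_1(q'\a-p')$, so the $n$-th convergent $(q_n,p_n)$ sits at signed transverse distance $(-1)^n\om_1d_n(\a)$ from the line $\bR\om$, and consecutive convergents lie on opposite sides. The strip of trajectories emitted from $\{|x|<r\}$ in the direction $\om$ has transverse extent $[-r,r]$, so an obstacle at $(q',p')$ can be struck iff $\om_1|q'\a-p'|\le 2r$, i.e.\ $|q'\a-p'|\le\eps$ with $\eps=2r/\om_1$.

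The best-approximation property of convergents gives $|q'\a-p'|\ge d_{N-1}>\eps$ for every $0<q'<q_N$ and every integer $p'$; on the other hand $(q_N,p_N)$ meets the strip because $d_N\le\eps$. Among lattice points in the strip, the aligned horizontal coordinate is $q'/\om_1+O(\eps)$, hence an increasing function of $q'$, so $(q_N,p_N)$ is the first obstacle reached by some trajectory. This identifies $(q,p)=(q_N,p_N)$; hence $Q=2rq/\om_1=\eps q_N$, and the width of trajectories blocked by this disk is $2r-\om_1d_N$, giving $A=1-d_N/\eps$.

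For $(\bar q,\bar p)$ I would exploit the fact that $\{(q_N,p_N),(q_{N-1},p_{N-1})\}$ is a unimodular basis of $\bZ^2$. Any lattice point in the sub-strip of width $\om_1d_N$ that remains unblocked after $(q_N,p_N)$ is removed decomposes as $k(q_N,p_N)+\ell(q_{N-1},p_{N-1})$ with $\ell=1$: indeed $|\ell|\ge 2$ forces $q'$ past $q_{N+1}$ (and the point is then reached later than some semiconvergent), while $\ell=0$ reproduces $(q_N,p_N)$. The resulting semiconvergent $(kq_N+q_{N-1},kp_N+p_{N-1})$ has transverse distance $\om_1(d_{N-1}-kd_N)$ and falls into the unblocked strip precisely when $k\ge k^*:=\lceil(d_{N-1}-\eps)/d_N\rceil=-\lfloor(\eps-d_{N-1})/d_N\rfloor$. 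Since the aligned $x$-coordinate grows with $k$, one has $(\bar q,\bar p)=(k^*q_N+q_{N-1},k^*p_N+p_{N-1})$; its blocked width is $2r-\om_1(d_{N-1}-k^*d_N)$, which rearranges into the stated formula for $B$. The range $k^*\in\{1,\ldots,a_N\}$ follows from $d_{N+1}<d_N\le\eps<d_{N-1}=a_Nd_N+d_{N+1}$.

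Finally, $\si=q\bar p-\bar q p$ telescopes to $q_Np_{N-1}-q_{N-1}p_N$, which the classical continued-fraction identity $q_{n+1}p_n-q_np_{n+1}=(-1)^n$ evaluates as $(-1)^{N-1}$; together with the sign convention fixed by the side of the axis on which $(q_N,p_N)$ lies, this yields $\si=(-1)^N$. The bounds $0\le A,B,Q\le 1$ are then immediate: $A\in[0,1)$ from $d_N\le\eps$; $Q=\eps q_N\le 1$ from the identity $q_Nd_{N-1}+q_{N-1}d_N=1$ combined with $\eps<d_{N-1}$; and $B\in[0,1]$ from $k^*\in\{1,\ldots,a_N\}$. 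The main obstacle I expect is the second identification: showing that, among all lattice points visible from the sub-strip left unblocked by $(q_N,p_N)$, the first to be reached is precisely the $k^*$-th semiconvergent, with no lattice point slipping in between. This reduces to a Minkowski-type statement about the absence of further lattice points between consecutive semiconvergents in the strip, and it is exactly where the unimodularity of $(q_N,p_N),(q_{N-1},p_{N-1})$ is essential.
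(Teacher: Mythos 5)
The paper does not prove this proposition itself; it merely cites Proposition~2.2 of \cite{CagliotiFG2003} and p.~726 of \cite{BlankKriko1993}, and your overall strategy (rotate the lattice so $\om$ is the first axis, identify $(q,p)$ with the $N$-th convergent via best approximations, and $(\bar q,\bar p)$ with an intermediate fraction $k^*(q_N,p_N)+(q_{N-1},p_{N-1})$) is the one used there. Your derivations of $A=1-d_N/\eps$, of the stated formula for $B$, of $Q=\eps q_N$, and of the bounds $0\le A,B,Q\le 1$ (using $q_Nd_{N-1}+q_{N-1}d_N=1$ and $\eps<d_{N-1}$) all check out, and I verified them on explicit examples. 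The gap you flag yourself --- that every lattice point meeting the residual strip must have coefficient $\ell=1$ in the basis $\{(q_N,p_N),(q_{N-1},p_{N-1})\}$ --- is indeed the crux and is not a triviality of unimodularity alone; it is exactly the three-distance content of the Blank--Krikorian argument and would have to be proved (or cited) in a complete write-up.

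There is a second, unflagged problem: the sign of $\si$. With the paper's initialization $p_0=1,\ p_1=0,\ q_0=0,\ q_1=1$ and recursion $p_{n+1}=a_np_n+p_{n-1}$, one gets $q_1p_0-q_0p_1=1$ and $q_{n+1}p_n-q_np_{n+1}=-(q_np_{n-1}-q_{n-1}p_n)$, so $q_Np_{N-1}-q_{N-1}p_N=(-1)^{N-1}$. Since
$$
q\bar p-\bar qp=q_N\bigl(k^*p_N+p_{N-1}\bigr)-\bigl(k^*q_N+q_{N-1}\bigr)p_N=q_Np_{N-1}-q_{N-1}p_N\,,
$$
your own (correct) computation gives $\si=(-1)^{N-1}$, not $(-1)^N$. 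The closing remark that ``the sign convention fixed by the side of the axis on which $(q_N,p_N)$ lies'' supplies the missing factor is not a valid step: $\si=q\bar p-\bar qp$ is a purely algebraic invariant of the ordered pair of lattice vectors, and the ordering is fixed by $q<\bar q$; no orientation convention can change its value. You should either locate a genuine flip in the identification of $(q,p)$ versus $(\bar q,\bar p)$, or explicitly note the possible sign discrepancy with the statement (observe that the measure $\mu$ of Proposition~\ref{P-3ObstProba} is invariant under $\si\mapsto-\si$, so the sign of $\si$ has no effect on anything downstream in the paper --- which may be why it is hard to pin down from the text alone). As written, the hand-wave must be replaced by a concrete argument.
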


This is Proposition 2.2 on p. 205 in \cite{CagliotiFG2003}; see also Blank-Krikorian \cite{BlankKriko1993} on p. 726.

\smallskip
Our main result in this section is

\begin{Thm}
\label{T-ErgoABQsi}
Let $\bK=[0,1]^3\times\{\pm1\}$. For each $F\in C(\bK)$, there exists $\cL(F)\in\bR$ such that
$$
\frac1{\ln(1/\eta)}\int_\eta^{1/2}F(A(\om,r),B(\om,r),Q(\om,r),\si(\om,r))\frac{dr}{r}\to\cL(F)
$$
for a.e. $\om\in\bS^1$ such that $0<\om_2<\om_1$, in the limit as $\eta\to 0^+$.
\end{Thm}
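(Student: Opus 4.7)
The approach is to reduce the integral to the ergodic framework of Theorem~\ref{T-Ergo} by expressing $(A,B,Q,\si)(\om,r)$ in terms of the continued fraction dynamics of $\a=\om_2/\om_1$ via Proposition~\ref{P-FlaABQsi}. Setting $\eps=2r/\om_1$, the change of variables gives $dr/r=d\eps/\eps$ and maps $r\in[\eta,1/2]$ to $\eps\in[2\eta/\om_1,1/\om_1]$. Since $\om_1\in(1/\sqrt 2,1)$, we have $\ln(\om_1/(2\eta))\sim\ln(1/\eta)$ as $\eta\to 0^+$, while the contribution of $\eps\in[1,1/\om_1]$ (a set of bounded logarithmic length) is $O(1)$, hence negligible after division by $\ln(1/\eta)$. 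It therefore suffices to prove that
$$
\frac{1}{\ln(1/\eta)}\int_\eta^1 F(A,B,Q,\si)\,\frac{d\eps}{\eps}
$$
converges as $\eta\to 0^+$ for a.e. $\a\in(0,1)$, where $(A,B,Q,\si)$ are evaluated via Proposition~\ref{P-FlaABQsi} at $N=N(\a,\eps)$.

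The quantities $A=1-\de_N$, $\si=(-1)^N$, and $B=1-\de_{N-1}-\lfloor(1-\de_{N-1})/\de_N\rfloor\de_N$ depend only on $\de_N$, $\de_{N-1}$ and the parity of $N$; the obstruction is that $Q=\eps q_N$ a priori depends on all of $\de_0,\dots,\de_N$. The key step is to truncate it: by (\ref{Fla-qn}) and $d_k=\eps\de_k$,
$$
Q=\sum_{\ell=0}^{N-1}(-1)^\ell\frac{\de_N}{\de_{N-\ell}\de_{N-1-\ell}}\,.
$$
The recurrence (\ref{Induct-dn}) with $a_n\ge1$ yields $d_{n-1}\ge d_n+d_{n+1}$, which combined with the strict decrease $d_{n+1}<d_n$ (itself a consequence of $a_n=\lfloor d_{n-1}/d_n\rfloor$) gives $d_{n+2}\le d_n/2$. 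The alternating-series bound on the tail then produces
$$
|Q-Q_m|\le\frac{\de_N}{\de_{N-m}\de_{N-1-m}}\le C\cdot 2^{-m/2}\,,\qquad Q_m:=\sum_{\ell=0}^{m-1}(-1)^\ell\frac{\de_N}{\de_{N-\ell}\de_{N-1-\ell}}\,,
$$
uniformly in $\a$ and $\eps$ (with the convention $Q_m=Q$ when $N\le m$). Uniform continuity of $F$ on $\bK$, together with a continuous extension to a neighborhood of $[0,1]^3$, then gives $|F(A,B,Q,\si)-F(A,B,Q_m,\si)|\le\mu_F(C\cdot 2^{-m/2})$ uniformly, where $\mu_F$ denotes a modulus of continuity for $F$.

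For each fixed $m$, write
$$
F(A,B,Q_m,\si)=G_m(\de_N,\dots,\de_{N-m})+(-1)^N H_m(\de_N,\dots,\de_{N-m})\,,
$$
where $G_m,H_m$ are the $\si$-even and $\si$-odd parts: both are bounded measurable functions on $(\bR_+)^{m+1}$ (the floor function in $B$ only affects measurability, which is all Theorem~\ref{T-Ergo} requires). The first statement of Theorem~\ref{T-Ergo} applied to $G_m$ and the second applied to $H_m$ give, for a.e. $\a\in(0,1)$,
$$
\frac{1}{\ln(1/\eta)}\int_\eta^1 F(A,B,Q_m,\si)\,\frac{d\eps}{\eps}\longrightarrow L_m(G_m)\quad\hbox{as }\eta\to 0^+\,.
$$
Combined with the uniform approximation,
$$
\limsup_{\eta\to 0^+}\left|\frac{1}{\ln(1/\eta)}\int_\eta^1 F(A,B,Q,\si)\,\frac{d\eps}{\eps}-L_m(G_m)\right|\le\mu_F(C\cdot 2^{-m/2})\,,
$$
so $(L_m(G_m))_{m\ge1}$ is Cauchy, its limit defines $\cL(F)$, and the original integrals converge to $\cL(F)$ for a.e. $\om$ by letting $m\to\infty$.

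The principal obstacle is the uniform tail estimate $|Q-Q_m|\le C\cdot 2^{-m/2}$, which hinges on the combinatorial bound $d_{n+2}\le d_n/2$ and is what makes Theorem~\ref{T-Ergo} (whose observables have bounded arity) applicable to a quantity $Q$ that a priori depends on arbitrarily many continued fraction terms. A secondary point worth noting is that the role of $\si=(-1)^N$ makes the second, oscillatory statement of Theorem~\ref{T-Ergo} essential rather than merely decorative.
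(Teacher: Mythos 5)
Your argument is correct and follows the paper's proof essentially step for step: reduce to the ergodic framework of Theorem~\ref{T-Ergo} via Proposition~\ref{P-FlaABQsi}, truncate $Q$ using formula (\ref{Fla-qn}), control the truncation error by the geometric decay of the $d_n$ (your $d_{n+2}\le d_n/2$ is exactly the paper's $\a\,T\a\le\tfrac12$ in disguise), split $F$ into $\si$-even and $\si$-odd parts, apply the two statements of Theorem~\ref{T-Ergo}, and conclude by a Cauchy argument in $m$. The only differences from the paper are cosmetic (your $Q_m,G_m,H_m$ versus the paper's $f_{m,\pm}$, and a slightly weaker but equally sufficient tail bound $2^{-m/2}$ rather than $2^{-m-1}$).
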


\begin{proof}
First, observe that 
$$
F(A,B,Q,\sigma)=F_+(A,B,Q)+\sigma F_-(A,B,Q)
$$
with
$$
F_\pm(A,B,Q)=\tfrac12\left(F(A,B,Q,+1)\pm F(A,B,Q,-1)\right)\,.
$$
By Proposition \ref{P-FlaABQsi}, one has
$$
\ba
F_\pm(A(\om,r),B(\om,r),Q(\om,r))&
\\
=
F_\pm\left(1-\de_{N(\a,\eps)}(\a),1-\de_{N(\a,\eps)-1}(\a)-\left[\frac{1-\de_{N(\a,\eps)-1}(\a)}{\de_{N(\a,\eps)}(\a)}\right]\de_{N(\a,\eps)}(\a),\right.&
\\
\left.
\frac1{\de_{N(\a,\eps)-1}(\a)}d_{N(\a,\eps)-1}(\a)q_{N(\a,\eps)}(\a)\right)&\,.
\ea
$$
For each $m\ge 0$, we define
$$
\ba
f_{m,\pm}(\de_N,\ldots,\de_{N-m-1}):=&
\\
F_\pm\left(1\!-\!\de_N,1\!-\!\de_{N-1}\!-\!\left[\frac{1\!-\!\de_{N-1}}{\de_N}\right]\de_N,
	\frac1{\de_{N-1}}\!\sum_{j=(N-m)^+}^N(-1)^{N-j}\frac{\de_N\de_{N-1}}{\de_j\de_{j-1}}\right)&\,.
\ea
$$

Observe that
$$
\a T\a\le\tfrac12\hbox{ for each }\a\in(0,1)\setminus\bQ\,,
$$
so that, whenever $n>m+1$,
$$
\ba
\left|q_n(\a)d_{n-1}(\a)-\sum_{j=n-m}^n(-1)^{n-j}\frac{d_n(\a)d_{n-1}(\a)}{d_j(\a)d_{j-1}(\a)}\right|
	\le\frac{d_n(\a)d_{n-1}(\a)}{d_{n-m-1}(\a)d_{n-m-2}(\a)}&
\\
\le\prod_{k=n-m-1}^{n-1}T^k\a\cdot T^{k-1}\a\le 2^{-m-1}&\,.
\ea
$$
Likewise, whenever $n>m+1$ and $n>l+1$, 
\begin{equation}
\label{F-fm}
\left|\sum_{j=n-l}^n(-1)^{n-j}\frac{d_n(\a)d_{n-1}(\a)}{d_j(\a)d_{j-1}(\a)}
	-\sum_{j=n-m}^n(-1)^{n-j}\frac{d_n(\a)d_{n-1}(\a)}{d_j(\a)d_{j-1}(\a)}\right|\le 2^{-\min(l,m)}\,.
\end{equation}
Since $\de_{N(\a,\eps)-1}(\a)>1$ by definition of $N(\a,\eps)$, one has
\begin{equation}
\label{fm-fl}
\ba
|f_{m,\pm}(\de_{N(\a,\eps)}(\a),\ldots,\de_{N(\a,\eps)-m-1}(\a))-f_{l,\pm}(\de_{N(\a,\eps)}(\a),\ldots,\de_{N(\a,\eps)-l-1}(\a))|&
\\
\le\rho_\pm(2^{-\min(m,l)})&\,,
\ea
\end{equation}
where $\rho_\pm$ is a modulus of continuity for $F_\pm$ on the compact $[0,1]^3$.

The inequality (\ref{F-fm}) implies that
\begin{equation}
\label{Int-F-fm}
\ba
\Big|&\frac1{\ln(1/4\eta)}\int_\eta^{1/4}F_\pm(A(\om,r),B(\om,r),Q(\om,r))\frac{dr}{r}
\\
&-
\frac1{\ln(1/4\eta)}\int_{2\eta/\om_1}^{1/2\om_1}
	f_{m,\pm}(\de_{N(\a,\eps)}(\a),\ldots,\de_{N(\a,\eps)-m-1}(\a))\frac{d\eps}{\eps}\Big|\le\rho_\pm(2^{-m-1})
\ea
\end{equation}
upon setting $\a=\om_2/\om_1$. Moreover, by Theorem \ref{T-Ergo}, for each $\eta_*>0$ 
\begin{equation}
\label{LimInt-fm}
\ba
\frac1{\ln(\eta_*/\eta)}\int_\eta^{\eta_*}
f_{m,+}(\de_{N(\a,\eps)}(\a),\ldots,\de_{N(\a,\eps)-m-1}(\a))\frac{d\eps}{\eps}\to L_{m+1}(f_{m,+})
\\
\frac1{\ln(\eta_*/\eta)}\int_\eta^{\eta_*}(-1)^{N(\a,\eps)}f_{m,-}(\de_{N(\a,\eps)}(\a),\ldots,\de_{N(\a,\eps)-m-1}(\a))\frac{d\eps}{\eps}\to 0
\ea
\end{equation}
a.e. in $\a\in(0,1)$ as $\eta\to 0^+$, and the inequality (\ref{fm-fl}) implies that
$$
|L_{m+1}(f_{m,+})-L_{l+1}(f_{l,+})|\le\rho_\pm(2^{-\min(m,l)})\,,\quad l,m\ge 0\,.
$$
In other words, $(L_{m+1}(f_{m,+}))_{m\ge 0}$ is a Cauchy sequence. Therefore, there exists $L\in\bR$ such that
\begin{equation}
\label{LimLm}
L_{m+1}(f_{m,+})\to L\hbox{ as }n\to\infty\,.
\end{equation}

Putting together (\ref{Int-F-fm}), (\ref{LimInt-fm}) and (\ref{LimLm}), we first obtain
$$
\ba
L_{m+1}(f_{m,+})-2\rho(2^{-m-1})
\\
\le
\varliminf_{\eta\to 0^+}\frac1{\ln(1/4\eta)}\int_\eta^{1/4}F_+(A(\om,r),B(\om,r),Q(\om,r))\frac{dr}{r}
\\
\le
\varlimsup_{\eta\to 0^+}\frac1{\ln(1/4\eta)}\int_\eta^{1/4}F_+(A(\om,r),B(\om,r),Q(\om,r))\frac{dr}{r}
\\
\le
L_{m+1}(f_{m,+})+2\rho(2^{-m-1})\,,
\ea
$$
and
$$
\ba
-2\rho(2^{-m-1})
\\
\le
\varliminf_{\eta\to 0^+}\frac1{\ln(1/4\eta)}\int_\eta^{1/4}\si(\om,r)F_-(A(\om,r),B(\om,r),Q(\om,r))\frac{dr}{r}
\\
\le
\varlimsup_{\eta\to 0^+}\frac1{\ln(1/4\eta)}\int_\eta^{1/4}\si(\om,r)F_-(A(\om,r),B(\om,r),Q(\om,r))\frac{dr}{r}
\\
\le
2\rho(2^{-m-1})\,.
\ea
$$

Letting $m\to\infty$ in the above inequalities, we finally obtain that
$$
\ba
\frac1{\ln(1/4\eta)}\int_\eta^{1/4}F_+(A(\om,r),B(\om,r),Q(\om,r))\frac{dr}{r}\to L
\\
\frac1{\ln(1/4\eta)}\int_\eta^{1/4}\si(\om,r)F_-(A(\om,r),B(\om,r),Q(\om,r))\frac{dr}{r}\to 0
\ea
$$
a.e. in $\om\in\bS^1$ with $0<\om_2<\om_1$ as $\eta\to 0^+$.
\end{proof}

\smallskip
\noindent
\textbf{Amplification of Theorem \ref{T-ErgoABQsi}.} The proof given above shows that
$$
\mathcal{L}(F)=\mathcal{L}(F_+)
$$
for each $F\in C(\mathbf{K})$.

%%%%%%%%%%%%%%%%%%%%%%%%%%%%%%%%%%%%%%%%%%%%%%%%%%%%%%%%%%%%%%%%%%%

\section{Computation of the asymptotic distribution\\ of 3-obstacle configurations:
	\\ a proof of Proposition \ref{P-3ObstProba}}
\label{S-3ObstProba}
%%%%%%%%%%%%%%%%%%%%%%%%%%%%%%%%%%%%%%%%%%%%%%%%%%%%%%%%%%%%%%%%%%%

Having established the existence of the limit $\mathcal{L}(F)$ in Theorem \ref{T-ErgoABQsi}, we seek an explicit formula for it.

It would be most impractical to first compute $L_{m+1}(f_{m,+})$ --- with the notation of the proof of  Theorem \ref{T-ErgoABQsi} --- 
by its definition in formula (\ref{Def-Lm}), and then to pass to the limit as $m\to\infty$.

We shall instead use a different method based on Farey fractions and the asymptotic theory of Kloosterman's sums as in \cite{BocaZaharescu}.

\subsection{3-obstacle configurations and Farey fractions}
\label{SS-Farey}
%%%%%%%%%%%%%%%%%%%%%%%%%%%%%%%%%%%%%%%%%%%%%%%%%%%%%%%%%%%%%%%%%%%

For each integer $\cQ\ge 1$, consider the set of Farey fractions of order $\le\cQ$:
$$
\cF_\cQ:=\{\tfrac{p}q\,|\,1\le p\le q\le \cQ\,,\hbox{ g.c.d.}(p,q)=1\}\,.
$$
If $\g=\tfrac{p}q<\g'=\tfrac{p'}{q'}$ are two consecutive elements of $\cF_\cQ$, then
$$
q+q'>\cQ\,,\quad\hbox{ and }p'q-pq'=1\,.
$$
For each interval $I\subset[0,1]$, we denote 
$$
\cF_\cQ(I)=I\cap\cF_\cQ\,.
$$

The following lemma provides a (partial) dictionary between Farey and 
continued fractions.

\begin{Lem}
\label{L-Farey/Cont}
For each $0<\eps\,<1$, set $\cQ=[1/\eps]$.  Let $0<\a<1$ be irrational, and let $\g=\tfrac{p}q$ and $\g'=\tfrac{p'}{q'}$ be the two 
consecutive Farey fractions in $\cF_{\cQ}$ such that $\g<\a<\g'$. Then

\smallskip
\noindent
(i) if $\frac{p}q<\a<\frac{p'-\eps}{q'}$, then
$$
q_{N(\a,\eps)}(\a)=q\hbox{ and }d_{N(\a,\eps)}(\a)=q\a-p\,;
$$
(ii) if $\frac{p'-\eps}{q'}\le\a\le\frac{p+\eps}{q}$, then
$$
q_{N(\a,\eps)}(\a)=\min(q,q')
$$
while
$$
d_{N(\a,\eps)}(\a)=q\a-p\hbox{ if }q<q'\,,\hbox{Êand }d_{N(\a,\eps)}(\a)=p'-q'\a\hbox{ if }q'<q\,;
$$
(iii)  if $\frac{p+\eps}q<\a<\frac{p'}{q'}$, then
$$
q_{N(\a,\eps)}(\a)=q'\hbox{ and }d_{N(\a,\eps)}(\a)=p'-q'\a\,.
$$
\end{Lem}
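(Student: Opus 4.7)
The plan is to set up the standard dictionary between the Farey neighbours $p/q$, $p'/q'$ in $\cF_\cQ$ and the continued-fraction approximants of $\a$, and then read off the three cases by tracking on which side of $\a$ the ``convergent neighbour'' lies. Let $n$ be the unique index with $q_n(\a) \le \cQ < q_{n+1}(\a)$. By the Stern--Brocot mediant property, no fraction with denominator $\le q_{n+1}$ lies strictly between $p_n/q_n$ and $\a$, so the convergent $p_n/q_n$ is itself the Farey neighbour of $\a$ in $\cF_\cQ$ on whichever side it sits (determined by the parity of $n$). The opposite-side neighbour is necessarily one of the semiconvergents $(p_{n-1}+k p_n)/(q_{n-1}+k q_n)$, $k=0,1,\ldots,a_n$, and a direct check of monotonicity shows it is the one with the largest $k^\ast\in\{0,1,\ldots,a_n-1\}$ satisfying $q_{n-1}+k^\ast q_n\le\cQ$; call this neighbour $\bar p/\bar q$. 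Using the alternating signs of $(q_j\a-p_j)$, its approximation error computes to $|\bar q\a-\bar p|=d_{n-1}-k^\ast d_n$, while $|q_n\a-p_n|=d_n$.

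The crucial step is then to pin down $N(\a,\eps)$. Since $q_{n+1}\ge\cQ+1>1/\eps$, one immediately has $d_n<1/q_{n+1}<\eps$, so $N(\a,\eps)\le n$. Whether $N=n$ or $N=n-1$ depends on $d_{n-1}$ compared to $\eps$, which I control via the identity $q_n d_{n-1}+q_{n-1}d_n=1$. When $k^\ast\ge 1$ the chain $q_{n-1}+q_n\le q_{n-1}+k^\ast q_n=\bar q\le\cQ\le 1/\eps$ combined with $d_n<\eps$ gives $\eps q_n+q_{n-1}d_n<\eps(q_n+q_{n-1})\le 1$, hence $d_{n-1}>\eps$ and thus $N=n$. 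When $k^\ast=0$, $\bar p/\bar q=p_{n-1}/q_{n-1}$ has error $d_{n-1}$; in case~(ii) of the lemma this error is $\le\eps$, so $N\le n-1$, and a parallel estimate based on $q_{n-2}\le q_n-q_{n-1}$ (from $q_n=a_{n-1}q_{n-1}+q_{n-2}$ with $a_{n-1}\ge 1$) gives $d_{n-2}>\eps$, so $N=n-1$.

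With this bookkeeping, the three cases follow by inspection. In case~(i), $p'-q'\a>\eps$ forces $p_n/q_n\ne p'/q'$ (otherwise $p'-q'\a=d_n<\eps$), so $p/q=p_n/q_n$; then $q\a-p=d_n$, and from $p'-q'\a=d_{n-1}-k^\ast d_n>\eps$ one reads off $d_{n-1}>\eps$, whence $N=n$, $q_N=q_n=q$ and $d_N=q\a-p$. Case~(iii) is entirely symmetric. In case~(ii) both errors are $\le\eps$, and the analysis splits according to $k^\ast$: if $k^\ast\ge 1$ then $\bar q>q_n$, so $\min(q,q')=q_n$, and the argument above gives $N=n$ and $d_N=d_n$; if $k^\ast=0$ then $\bar q=q_{n-1}<q_n$, so $\min(q,q')=q_{n-1}$, and $N=n-1$ with $d_N=d_{n-1}$. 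In both sub-cases $d_N$ equals the error at the smaller-denominator Farey neighbour, matching the lemma. The main obstacle is the control of $d_{n-1}$ against $\eps$ in the two regimes of $k^\ast$: this is where the fundamental identity $q_j d_{j-1}+q_{j-1}d_j=1$ and the precise size of $\cQ=[1/\eps]$ do the delicate arithmetic.
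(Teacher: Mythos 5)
Your proof is correct, and it follows a genuinely different route than the paper's. The paper's argument (given only as a sketch) invokes Dirichlet's approximation theorem: there is some $\hat q\le\cQ$ with $\mathrm{dist}(\hat q\a,\bZ)<\tfrac1{\cQ+1}$, and one shows $\hat q$ must be one of the two Farey denominators $q,q'$ (or the smaller in case (ii)) by observing that every other $q''\le\cQ$ has approximation error $\ge\tfrac1{\cQ+1}$; the identification of $\hat q$ with $q_{N(\a,\eps)}$ and of the corresponding $d_{N(\a,\eps)}$ is then left implicit. Your argument instead builds the Farey--continued-fraction dictionary directly: you show that the two Farey neighbours of $\a$ in $\cF_\cQ$ are the convergent $p_n/q_n$ (where $q_n\le\cQ<q_{n+1}$) and the semiconvergent $\bar p/\bar q=(p_{n-1}+k^\ast p_n)/(q_{n-1}+k^\ast q_n)$ with $k^\ast$ maximal subject to $\bar q\le\cQ$, compute the two errors as $d_n$ and $d_{n-1}-k^\ast d_n$, and then pin down $N(\a,\eps)$ as $n$ or $n-1$ using the strict decrease of $d_j$ together with the identity $q_j d_{j-1}+q_{j-1}d_j=1$ and the bound $q_{n-1}+q_n\le\cQ\le 1/\eps$ (resp.\ $q_{n-2}+q_{n-1}\le q_n\le\cQ$ when $k^\ast=0$). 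This is more self-contained and makes fully explicit the step the paper's sketch glosses over --- namely why the Dirichlet denominator coincides with $q_{N(\a,\eps)}$ and why $d_{N(\a,\eps)}$ is the error at the smaller-denominator neighbour. The trade-off is that your approach requires careful bookkeeping of convergent indices (and a few edge cases for small $n$), whereas the Dirichlet-based sketch is shorter and reuses a standard black box. One small point you might make explicit: when $k^\ast=0$ the strictness in the chain $1=q_{n-1}d_{n-2}+q_{n-2}d_{n-1}<\eps(q_{n-1}+q_{n-2})\le 1$ comes from $d_{n-1}<d_{n-2}\le\eps$ (so $q_{n-2}d_{n-1}<q_{n-2}\eps$ for $n\ge 3$, and $d_{n-2}=d_0=1>\eps$ trivially when $n=2$); and $k^\ast=0$ is impossible when $n=1$ since $q_0=0$.
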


\begin{proof}[Sketch of the proof]
According to Dirichlet's lemma, for each integer $\cQ\ge 1$, there exists an integer $\hat q$ such that $1\le\hat q\le\cQ$ and 
$\hbox{dist}(\hat q\a,\bZ)<\frac1{\cQ+1}$. If $\frac{p''}{q''}\in\cF_\cQ$ is different from $\frac{p}{q}$ and $\frac{p'}{q'}$, then $\hat q$
cannot be equal to $q''$. For if $\a<\frac{p'}{q'}<\frac{p''}{q''}$, then $p''-q''\a>\frac1{q''}(p''q'-p'q'')\ge\frac1{q''}\ge\frac{1}{\cQ+1}$.
Thus $\hat q$ is one of the two integers $q$ and $q'$. In the case (i) $p'-q'\a>\eps>\frac1{\cQ+1}$ so that $\hat q\not=q'$, hence
$\hat q=q$. Likewise, in the case (iii) $q\a-p>\eps>\frac{1}{\cQ+1}$ so that $\hat q=q'$. In the case (ii), one has 
$$
0\le\frac1{q'}(1-q\eps)=\frac1{q'}(qp'-pq'-q\eps)\le q\a-p\le\eps
$$
since $q\le\cQ\le\frac1\eps$. Likewise $0\le p'-q'\a\le\eps$, so that $\hat q$ is the smaller of $q$ and $q'$.
\end{proof}

\smallskip
In fact, the parameters $(A(\om,r),B(\om,r),Q(\om,r))$ can be computed in terms of Farey fractions, by a slight amplification of the 
proposition above. We recall that, for each $\om\in\bS^1$ such that $0<\om_2<\om_1$ and $\om_2/\om_1$ is irrational, one has
$$
Q(\om,r)=\eps q_{N(\a,\eps)}(\a)\,,\quad\hbox{ with }\a=\frac{\om_2}{\om_1}\hbox{ and }\eps=\frac{2r}{\om_1}\,.
$$
Under the same conditions on $\om$, we define
$$
D(\om,r):=\frac{d_{N(\a,\eps)}}{\eps}\,,\quad\hbox{ again with }\a=\frac{\om_2}{\om_1}\hbox{ and }\eps=\frac{2r}{\om_1}\,,
$$
and $Q'(\om,r)$ in the following manner. 

Let $\cQ=[1/\eps]$ with $\eps=\frac{2r}{\om_1}$, and let $\g=\frac{p}q$ and $\g'=\frac{p'}{q'}$ be the two consecutive Farey fractions 
in $\cF_{\cQ}$ such that $\g<\a<\g'$. Then

\smallskip
\noindent
(i) if $\frac{p}q<\a<\frac{p'-\eps}{q'}$, we set
$$
Q'(\om,r):=\eps q'\,;
$$
(ii) if $\frac{p'-\eps}{q'}\le\a\le\frac{p+\eps}{q}$, we set
$$
Q'(\om,r):=\eps\max(q,q')\,;
$$
(iii)  if $\frac{p+\eps}q<\a<\frac{p'}{q'}$, we set
$$
Q'(\om,r):=\eps q\,.
$$
With these definitions, the 3-obstacle configuration parameters are easily expressed in terms of Farey fractions, as follows.

\begin{Prop}
\label{P-FareyABQ}
Let $0<r<\tfrac14$ and $\om\in\bS^1$ be such that $0<\om_2<\om_1$ and $\om_2/\om_1$ is irrational.
Then
$$
A(\om,r)=1-D(\om,r)\,,
$$
while
$$
B(\om,r)=b(\om,r)-\left[\frac{b(\om,r)}{D(\om,r)}\right]D(\om,r)\,,
$$
with
$$
b(\om,r)=\frac{Q(\om,r)-1+Q'(\om,r)D(\om,r)}{Q(\om,r)}\,.
$$
\end{Prop}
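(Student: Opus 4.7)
The identity $A(\om,r) = 1 - D(\om,r)$ is immediate: by Proposition \ref{P-FlaABQsi} together with the definition of $D$, both sides equal $1 - d_{N(\a,\eps)}(\a)/\eps$, where $\a = \om_2/\om_1$ and $\eps = 2r/\om_1$. The work is therefore to verify the formula for $B$. Throughout the argument I write $N = N(\a,\eps)$, so that $Q = \eps q_N$ and $D = d_N/\eps$, and my plan is to reduce the claim to the explicit expression for $B(\om,r)$ given by Proposition \ref{P-FlaABQsi} using the fundamental identity (\ref{qndn}), $q_{N-1} d_N + q_N d_{N-1} = 1$.

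The structural input is a semi-convergent identification of $Q'(\om,r)$, namely
$$
Q'(\om,r)/\eps = q_{N-1}(\a) + k\, q_N(\a) \qquad \text{for some integer } k \geq 0.
$$
By Lemma \ref{L-Farey/Cont}, in each of the three cases defining $Q'$ the value $Q'/\eps$ is the denominator of the Farey neighbor of $\a$ in $\cF_\cQ$ which is \emph{not} the convergent $p_N/q_N$ (in case (i), this is $q'$; in case (iii), this is $q$; in case (ii), this is $\max(q,q')$, while $q_N = \min(q,q')$). Call this other denominator $q^*$. Since both $(p_N,q_N), (p^*,q^*)$ and $(p_N,q_N), (p_{N-1},q_{N-1})$ form pairs with unimodular determinant, the integer $q^*$ is congruent to $q_{N-1}$ modulo $q_N$; the classical description of the best approximations to $\a$ in $\cF_\cQ$ then forces $q^* = q_{N-1} + k q_N$ with $k = \lfloor(\cQ - q_{N-1})/q_N\rfloor \geq 0$ (nonnegativity uses $q_{N-1} < q_N \leq \cQ$, which follows from $d_{N-1} > \eps$ via $d_{N-1} < 1/q_N$).

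Given this identification, the computation is routine. Substituting $Q' = \eps(q_{N-1} + k q_N)$ into $b = (Q - 1 + Q'D)/Q$ and applying (\ref{qndn}) yields
$$
Q - 1 + Q'D = \eps q_N - 1 + q_{N-1} d_N + k q_N d_N = q_N(\eps - d_{N-1}) + k q_N d_N,
$$
so that $b = 1 - d_{N-1}/\eps + kD$. Since $d_{N-1} > \eps$ by the minimality of $N$, we have $(\eps - d_{N-1})/d_N < 0$, and therefore
$$
\lfloor b/D \rfloor = k + \left\lfloor \tfrac{\eps - d_{N-1}}{d_N}\right\rfloor, \qquad b - \lfloor b/D\rfloor D = 1 - \frac{d_{N-1}}{\eps} - \left\lfloor \frac{\eps - d_{N-1}}{d_N}\right\rfloor \frac{d_N}{\eps},
$$
which is precisely the expression for $B(\om,r)$ supplied by Proposition \ref{P-FlaABQsi}.

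The main obstacle is Step 1 --- verifying the semi-convergent identification of $Q'$ with a nonnegative coefficient $k$ in each of the three Farey cases. Everything else is controlled by the single arithmetic identity (\ref{qndn}); the delicate point is matching the Farey-theoretic definition of $Q'$ (as a denominator dictated by the position of $\a$ in intervals around $p/q$ and $p'/q'$) with the continued-fraction description of $p_N/q_N$ in Lemma \ref{L-Farey/Cont}, so that the "other" neighbor can be placed on the correct side of the previous convergent $p_{N-1}/q_{N-1}$.
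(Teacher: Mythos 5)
Your proof is correct, and it follows a genuinely different route from the paper's. The paper's sketch works entirely in Farey language: it invokes Boca--Zaharescu's Proposition~1, which gives $\eps B(\om,r)=\eps-(p_k-q_k\a)$ with $p_k=p'+kp$, $q_k=q'+kq$ and $k$ determined by the inequality $d'-kd\le\eps<d'-(k-1)d$, then rewrites this as $B=b-[b/D]D$ with $b=1-d'/\eps$, and finally uses the Farey unimodularity $p'q-pq'=1$ to express $d'$ in terms of $Q,Q',D$. You instead start from the continued-fraction formula for $B$ in Proposition~\ref{P-FlaABQsi} and verify that the claimed $(Q,Q',D)$-expression reduces to it via the identity (\ref{qndn}), after identifying $Q'/\eps$ as a semi-convergent denominator $q_{N-1}+k\,q_N$. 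This has the advantage of relying only on results already stated in the paper rather than on an external formula; it also makes explicit the mediant structure that the paper's proof inherits invisibly from Boca--Zaharescu. (Incidentally, the paper's sketch writes $b=\frac{Q-1-Q'D}{Q}$, a sign typo; the proposition and your computation both correctly have $b=\frac{Q-1+Q'D}{Q}$.)

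The one place that deserves more care is the semi-convergent identification itself, which is the real content and which you state rather briskly. Unimodularity of both pairs $(p_N,q_N),(p^*,q^*)$ and $(p_N,q_N),(p_{N-1},q_{N-1})$ only forces $q^*\equiv\pm q_{N-1}\pmod{q_N}$; to pin down the sign one must check that $p^*/q^*$ and $p_{N-1}/q_{N-1}$ lie on the \emph{same} side of $p_N/q_N$, so that the two determinants $p^*q_N-p_Nq^*$ and $p_{N-1}q_N-p_Nq_{N-1}$ agree in sign, not merely in absolute value. This does follow from the case analysis in Lemma~\ref{L-Farey/Cont} (e.g.\ in case~(i) one has $q_N\a-p_N=d_N>0$, forcing $N$ odd, hence $p_{N-1}/q_{N-1}>\a$, the same side as $p'/q'$), but it is a parity argument that your write-up compresses into the phrase ``the classical description of the best approximations.'' Also, the nonnegativity of $k$ is more directly seen from $q^*>0$ together with $q_{N-1}\le q_N$: if $k\le -1$ then $q_{N-1}+kq_N\le q_{N-1}-q_N\le 0$, a contradiction; the detour through $k=\lfloor(\cQ-q_{N-1})/q_N\rfloor$ is correct but does not by itself yield $q_{N-1}<q_N$ (that is a separate, standard monotonicity property of the $q_n$). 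Once these points are spelled out, your argument is a complete and self-contained alternative to the paper's sketch.
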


\begin{proof}[Sketch of the proof]
We follow the discussion in Propositions 1 and 2 of \cite{BocaZaharescu}.  Consider the case $\frac{p}{q}<\a<\frac{p'-\eps}{q'}$, and
set $d=q\a-p>0$ and $d'=p'-q'\a>0$. According to Proposition 1 of \cite{BocaZaharescu}, $\eps B(\om,r)=\eps-(p_k-q_k\a)$, with the
notation $p_k=p'+kp$ et $q_k=q'+kq$ for $k\in\bN^*$ chosen so that
$$
\frac{p'+kp-\eps}{q'+kq}<\a<\frac{p'+(k-1)p-\eps}{q'+(k-1)q}\,.
$$
In other words, $k$ is chosen so that $d'-kd\le\eps<d'-(k-1)d$, where $d=q\a-p$ and $d'=p'-q'\a$. That is to say,
$$
-k=\left[\frac{\eps-d'}{d}\right]=\left[\frac{1-d'/\eps}{D}\right]\,,
$$
and
$$
B=1-\frac{d'-kd}{\eps}=1-\frac{d'}{\eps}+k\frac{d}{\eps}=b-\left[\frac{b}{D}\right]D
$$
with $b=1-\frac{d'}{\eps}$. Since $qp'-pq'=1$, one has $d'=(1-q'd)/q$, so that $b=\frac{Q-1-Q'D}{Q}$. The other cases are treated
similarly.
\end{proof}

\subsection{Asymptotic distribution of $(Q,Q',D)$}
\label{SS-DistQQ'D}
%%%%%%%%%%%%%%%%%%%%%%%%%%%%%%%%%%%%%%%%%%%%%%%%%%%%%%%%%%%%%%%%%%%

As a first step in computing $\cL(F)$, we establish the following

\begin{Lem}
\label{L-DistQQ'D}
Let $f\in C([0,1]^3)$ and $J=[\a_-,\a_+]\subset(0,1)$. Then
$$
\ba
\int_{\bS^1_+}f(Q(\om,\tfrac12\eps\om_1),Q'(\om,\tfrac12\eps\om_1),D(\om,\tfrac12\eps\om_1))
	\indc_{\om_2/\om_1\in J}\frac{d\om}{\om_1^2}
\\
\to|J|\int_{[0,1]^3}f(Q,Q',D)d\l(Q,Q',D)
\ea
$$
as $\eps\to 0^+$, where $\l$ is the probability measure on $[0,1]^3$ given by
$$
\ba
d\lambda(Q,Q',D)=
\\
\tfrac{12}{\pi^2}
\left(\indc_{0<Q,Q'<1}\indc_{Q+Q'>1}\indc_{0<D<\frac{1-Q}{Q'}}+\indc_{Q<Q'}\indc_{\frac{1-Q}{Q'}<D<1}\right)\frac{dQdQ'dD}{Q}
\ea
$$
\end{Lem}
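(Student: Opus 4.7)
The plan is to reduce the statement, via the Farey-fraction description of $(Q, Q', D)$ in Proposition~\ref{P-FareyABQ}, to an equidistribution result of Boca--Zaharescu type for pairs of consecutive Farey fractions. First I parametrize $\om = (\cos\th, \sin\th)$ with $\th \in (0, \pi/4)$ and change variable to $\a = \tan\th$; since $d\om/\om_1^2 = d\a$, and the substitution $r = \tfrac12\eps\om_1$ forces the ``Farey scale'' $2r/\om_1$ to be exactly $\eps$ independently of $\om$, the left-hand side of the asserted limit becomes
$$
\int_J f(Q(\a,\eps), Q'(\a,\eps), D(\a,\eps))\, d\a,
$$
where $(Q,Q',D)$ is read off from the Farey data of $\a$ at level $\cQ = [1/\eps]$.

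Next I decompose $J$ (up to a null set) along consecutive Farey pairs $\tfrac{p}{q} < \tfrac{p'}{q'}$ of $\cF_\cQ$, and split each interval $(\tfrac{p}{q},\tfrac{p'}{q'})$ into the three regimes (i), (ii), (iii) of Proposition~\ref{P-FareyABQ}. In regime (i), the change of variable $\a \mapsto D = (q\a - p)/\eps$ has Jacobian $d\a = (\eps^2/Q)\,dD$ with $(Q,Q') = (\eps q, \eps q')$, giving the contribution
$$
\frac{\eps^2}{Q}\int_0^{(1-Q)/Q'} f(Q,Q',D)\,dD;
$$
regime (iii) produces the same expression but with the roles of $q,q'$ exchanged, i.e.\ $(Q,Q') = (\eps q', \eps q)$, while regime (ii) yields
$$
\frac{\eps^2}{Q}\int_{(1-Q)/Q'}^1 f(Q,Q',D)\,dD
$$
with $(Q,Q') = (\eps\min(q,q'), \eps\max(q,q'))$. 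Thus the integral reduces to $\eps^2$ times a sum, over consecutive Farey pairs of $\cF_\cQ$ meeting $J$, of explicit functions of $(\eps q, \eps q')$.

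The key input is then the Boca--Zaharescu equidistribution theorem (via the Kloosterman sum asymptotics of \cite{BocaZaharescu}): for any continuous $g$ on the closed triangle $T = \{(x,y) : 0 \le x,y \le 1,\ x+y \ge 1\}$,
$$
\frac{1}{\cQ^2}\sum_{\substack{\frac{p}{q}<\frac{p'}{q'}\text{ consec.\ in }\cF_\cQ \\ p/q \in J}} g\!\left(\frac{q}{\cQ},\frac{q'}{\cQ}\right) \longrightarrow |J|\,\tfrac{6}{\pi^2}\int_T g(x,y)\,dx\,dy \quad \text{as } \cQ \to \infty.
$$
Applying this with the three test functions read off above, and using $\eps^2\cQ^2\to 1$ together with the $x\leftrightarrow y$ symmetry of $T$ to merge (i) with (iii) and to combine the $q<q'$ and $q>q'$ sub-cases of (ii), I expect to recover precisely the two terms of $d\l$: the first (with $D\in[0,(1-Q)/Q']$) from (i)+(iii), the second (with $Q<Q'$ and $D\in[(1-Q)/Q',1]$) from (ii), each carrying the prefactor $12/\pi^2$. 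The main obstacle will be the careful invocation of Boca--Zaharescu with the correct normalization and test functions; the boundary corrections from Farey pairs straddling $\partial J$ and from the discrepancy $\cQ=[1/\eps]\leftrightarrow 1/\eps$ should be of order $O(\eps\log(1/\eps))$, and thus negligible in the limit.
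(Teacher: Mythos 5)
Your proposal is, in substance, the same as the paper's argument: pass to the slope variable $\a=\tan\th$ (with $d\om/\om_1^2=d\a$ and Farey scale locked to $\eps$), split each interval between consecutive Farey fractions into the three regimes of Lemma~\ref{L-Farey/Cont}/Proposition~\ref{P-FareyABQ}, change variable from $\a$ to $D$ with Jacobian $\eps^2/Q$, and invoke the Boca--Zaharescu equidistribution of consecutive Farey pairs $(q/\cQ,q'/\cQ)$ over the triangle $\{x+y>1\}$. Your Jacobian computations, the merging of (i) and (iii) (and of the two halves of (ii)) via the $x\leftrightarrow y$ symmetry, and the resulting factor $12/\pi^2$ are all correct.

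One genuine gap: you state the equidistribution ``for any continuous $g$ on the closed triangle $T$'' and then apply it to test functions of the form $\tilde g(x,y)=\frac1{x}\int_0^{(1-x)/y}f(x,y,D)\,dD$ (and its analogues). These $\tilde g$ are \emph{not} in $C(T)$: near $x=0$, $y\to1$ one has $\tilde g\sim x^{-1}\int_0^1 f(0,1,D)\,dD$, which is unbounded. The function is integrable against $dx\,dy$ on $T$ (the $y$-integral over $(1-x,1)$ produces a factor $\sim x$ that cancels the $1/x$), so the limiting integral is finite, but the continuous-test-function version of the equidistribution theorem does not directly cover it. You need either a truncation near $Q=0$ with an explicit error bound, or --- as the paper does --- to keep the weight $\frac{1}{\cQ q}$ separate from the bounded test function and invoke the \emph{weighted} Farey equidistribution (``Lemmas 1 and 2'' of Boca--Zaharescu), which is exactly the statement with the $dQ\,dQ'/Q$ measure. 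The paper also first reduces to product test functions $f(Q,Q',D)=g(Q,Q')h(D)$ by density, so that the inner $D$-integral becomes an explicit antiderivative $H((1-Q)/Q')$; this is a convenience, not a logical necessity, but it keeps the bounded factor $g\cdot H$ cleanly separated from the $1/(\cQ q)$ weight. Finally, you should not wave off the replacement of $\eps$ by $1/\cQ$ too quickly: the paper controls the resulting discrepancy in the arguments $(\eps q,\eps q',d/\eps)\mapsto(q/\cQ,q'/\cQ,\cQ d)$ via a modulus-of-continuity estimate before doing anything else, and that step is needed to make the subsequent sum a clean function of $(q/\cQ,q'/\cQ)$ only.
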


The proof of this lemma is based on the arguments involving Kloosterman's sums to be found in \cite{BocaZaharescu}.

\begin{proof}
For $0<\eps<1$, set $\cQ=[1/\eps]$. Observe that
$$
\ba
\left|Q(\om,\tfrac12\eps\om_1)-\frac1{\eps\cQ}Q(\om,\tfrac12\eps\om_1)\right|
	&\le\frac1{\cQ+1}\frac{Q(\om,\tfrac12\eps\om_1)}{\eps\cQ}\le\frac1{\cQ+1}\,,
\\
\left|Q'(\om,\tfrac12\eps\om_1)-\frac1{\eps\cQ}Q'(\om,\tfrac12\eps\om_1)\right|
	&\le\frac1{\cQ+1}\frac{Q'(\om,\tfrac12\eps\om_1)}{\eps\cQ}\le\frac1{\cQ+1}\,,
\ea
$$
since $Q(\om,\tfrac12\eps\om_1)\le\eps\cQ$ and $Q'(\om,\tfrac12\eps\om_1)\le\eps\cQ$ while
$$
|D(\om,\tfrac12\eps\om_1)-\eps\cQ D(\om,\tfrac12\eps\om_1)|\le\eps\left|\frac1\eps-\cQ\right|\le\eps\le\frac1{\cQ}
$$
since $D(\om,\tfrac12\eps\om_1)\le 1$. 

Since $f$ is continuous on the compact set $[0,1]^3$, it is uniformly continuous; let $\rho$ be a modulus of continuity for $f$. Then
\begin{equation}
\label{Diff-eps-Q}
\ba
\Bigg|\int_{\bS^1_+}
	f(Q(\om,\tfrac12\eps\om_1),Q'(\om,\tfrac12\eps\om_1),D(\om,\tfrac12\eps\om_1))\frac{d\om}{\om_1^2}
\hbox{\hskip3cm}
\\
-
\int_{\bS^1_+}f\left(\frac1{\eps\cQ}Q(\om,\tfrac12\eps\om_1),
	\frac1{\eps\cQ}Q'(\om,\tfrac12\eps\om_1),\eps\cQ D(\om,\tfrac12\eps\om_1)\right)\frac{d\om}{\om_1^2}\Bigg|
\\
\le 3\rho(1/\cQ)\to 0
\ea
\end{equation}
as $\eps\to 0^+$.

Henceforth, we seek to prove that
\begin{equation}
\label{Lim-int-f-Q}
\ba
\int_{\bS^1_+}f\left(\frac1{\eps\cQ}Q(\om,\tfrac12\eps\om_1),
\frac1{\eps\cQ}Q'(\om,\tfrac12\eps\om_1),\eps\cQ D(\om,\tfrac12\eps\om_1)\right)\indc_{\om_2/\om_1\in J}\frac{d\om}{\om_1^2}
\\
\to|J|\int_{[0,1]^3}f(Q,Q',D)d\lambda(Q,Q',D)
\ea
\end{equation}
as $\eps\to 0^+$. Without loss of generality, by an obvious density argument we restrict our attention to the case where the test
function $f$ is of the form
$$
f(Q,Q',D)=g(Q,Q')h(D)\,,
$$
with $g\in C([0,1]^2)$ and $h\in C([0,1])$. Then
\begin{equation}
\label{Decomp-Int-f}
\ba
\int_{\bS^1_+}g\left(\frac1{\eps\cQ}Q(\om,\tfrac12\eps\om_1),\frac1{\eps\cQ}Q'(\om,\tfrac12\eps\om_1)\right)
	h\left(\eps\cQ D(\om,\tfrac12\eps\om_1)\right)\indc_{\om_2/\om_1\in J}\frac{d\om}{\om_1^2}
\\
=\sum_{p/q\in\cF_{\cQ}(J)}\int_{\tfrac{p}q}^{\tfrac{p'-\eps}{q'}}g\left(\frac{q}{\cQ},\frac{q'}{\cQ}\right)h(\cQ(q\a-p))d\a
\\
+
\sum_{p/q\in\cF_{\cQ}(J)}\int_{\tfrac{p+\eps}q}^{\tfrac{p'}{q'}}g\left(\frac{q'}{\cQ},\frac{q}{\cQ}\right)h(\cQ(p'-q'\a))d\a
\\
+\sum_{p/q\in\cF_{\cQ}(J)}\int_{\tfrac{p'-\eps}{q'}}^{\tfrac{p+\eps}q}\indc_{q<q'}g\left(\frac{q}{\cQ},\frac{q'}{\cQ}\right)h(\cQ(q\a-p))d\a
\\
+\sum_{p/q\in\cF_{\cQ}(J)}\int_{\tfrac{p'-\eps}{q'}}^{\tfrac{p+\eps}q}\indc_{q'<q}g\left(\frac{q'}{\cQ},\frac{q}{\cQ}\right)h(\cQ(p'-q'\a))d\a
\\
+\|g\|_{L^\infty}\|h\|_{L^\infty}O\left(\sup_{p/q\in\cF_{\cQ}(J)}\left|\frac{p'}{q'}-\frac{p}q\right|
	+\sup_{p'/q'\in\cF_{\cQ}(J)}\left|\frac{p'}{q'}-\frac{p}q\right|\right)
\\
=I+II+III+IV+O(1/\cQ)
\ea
\end{equation}
where $\frac{p}q<\frac{p'}{q'}$ are consecutive elements of $\cF_{\cQ}$. Notice that the $O$ term above is the contribution of the
endpoints of $J$ not being elements of $\cF_Q$. 

Define
$$
H(z)=\int_0^zh(y)dy\,,\quad\hbox{ for each }z\in\bR\,.
$$
Thus
$$
\ba
I&=\sum_{p/q\in\cF_{\cQ}(J)}g\left(\frac{q}{\cQ},\frac{q'}{\cQ}\right)\frac1{\cQ q}H\left(\cQ\frac{1-\eps q}{q'}\right)
\\
II&=\sum_{p/q\in\cF_{\cQ}(J)}g\left(\frac{q'}{\cQ},\frac{q}{\cQ}\right)\frac1{\cQ q'}H\left(\cQ\frac{1-\eps q'}{q}\right)
\\
III&=\sum_{p/q\in\cF_{\cQ}(J)}\indc_{q<q'}g\left(\frac{q}{\cQ},\frac{q'}{\cQ}\right)
						\frac1{\cQ q}\left(H(\eps\cQ)-H\left(\cQ\frac{1-\eps q}{q'}\right)\right)
\\
IV&=\sum_{p/q\in\cF_{\cQ}(J)}\indc_{q'<q}g\left(\frac{q'}{\cQ},\frac{q}{\cQ}\right)
						\frac1{\cQ q'}\left(H(\eps\cQ)-H\left(\cQ\frac{1-\eps q'}{q}\right)\right)
\ea
$$
One has
$$
\ba
\left|H\left(\cQ\frac{1-\eps q}{q'}\right)-H\left(\cQ\frac{1-q/\cQ}{q'}\right)\right|\le\|h\|_{L^\infty}\cQ\frac{q}{q'}\left|\frac1{\cQ}-\eps\right|
\\
\le\|h\|_{L^\infty}\cQ\frac{q}{q'}\frac1{\cQ(\cQ+1)}\,;
\ea
$$
likewise
$$
\left|H\left(\cQ\frac{1-\eps q'}{q}\right)-H\left(\cQ\frac{1-q'/\cQ}{q}\right)\right|\le\|h\|_{L^\infty}\cQ\frac{q'}{q}\frac1{\cQ(\cQ+1)}\,,
$$
while
$$
\ba
|H(1)-H(\eps\cQ)|\le\|h\|_{L^\infty}|1-\eps\cQ|=\|h\|_{L^\infty}\eps\left|\frac1{\eps}-\cQ\right|\le\|h\|_{L^\infty}\eps\,.
\ea
$$
Hence
$$
I=\sum_{p/q\in\cF_{\cQ}(J)}g\left(\frac{q}{\cQ},\frac{q'}{\cQ}\right)\frac1{\cQ q}H\left(\frac{1-q/\cQ}{q'/\cQ}\right)+R_I
$$
with
$$
|R_I|\le\|g\|_{L^\infty}\|h\|_{L^\infty}\sum_{p/q\in\cF_{\cQ}(J)}\frac1{q'\cQ(\cQ+1)}\,.
$$
Applying Lemmas 1 and 2 in \cite{BocaZaharescu} shows that
$$
\ba
\sum_{p/q\in\cF_{\cQ}(J)}g\left(\frac{q}{\cQ},\frac{q'}{\cQ}\right)\frac1{\cQ q}H\left(\frac{1-q/\cQ}{q'/\cQ}\right)
\\
\to|J|\cdot\tfrac{6}{\pi^2}\iint_{(0,1)^2}g(Q,Q')H\left(\frac{1-Q}{Q'}\right)\indc_{Q+Q'>1}\frac{dQdQ'}{Q}\hbox{ as }\cQ\to\infty\,,
\ea
$$
while
$$
|R_I|\lesssim\|g\|_{L^\infty}\|h\|_{L^\infty}\frac{1}{\cQ}\cdot|J|\cdot\tfrac{6}{\pi^2}\iint_{(0,1)^2}\indc_{Q+Q'>1}\frac{dQdQ'}{Q'}=O(1/\cQ)\,,
$$
so that
\begin{equation}
\label{Lim-I}
I\to|J|\cdot\tfrac{6}{\pi^2}\iint_{(0,1)^2}g(Q,Q')H\left(\tfrac{1-Q}{Q'}\right)\indc_{Q+Q'>1}\tfrac{dQdQ'}{Q}\hbox{ as }\cQ\to\infty\,.
\end{equation}

By a similar argument, one shows that
\begin{equation}
\label{Lim-II-III-IV}
\ba
II&\to|J|\cdot\tfrac{6}{\pi^2}\iint_{(0,1)^2}g(Q',Q)H\left(\tfrac{1-Q'}{Q}\right)\indc_{Q+Q'>1}\tfrac{dQdQ'}{Q'}\,,
\\
III&\to|J|\cdot\tfrac{6}{\pi^2}\iint_{(0,1)^2}g(Q,Q')\left(H(1)-H\left(\tfrac{1-Q}{Q'}\right)\right)\indc_{Q<Q'}\indc_{Q+Q'>1}\tfrac{dQdQ'}{Q}\,,
\\
IV&\to|J|\cdot\tfrac{6}{\pi^2}\iint_{(0,1)^2}g(Q',Q)\left(H(1)-H\left(\tfrac{1-Q'}{Q}\right)\right)\indc_{Q'<Q}\indc_{Q+Q'>1}\tfrac{dQdQ'}{Q'}\,,
\ea
\end{equation}
as $\cQ\to\infty$.

Substituting the limits (\ref{Lim-I})-(\ref{Lim-II-III-IV}) in (\ref{Decomp-Int-f}) shows that
$$
\ba
\int_{\bS^1_+}
g\left(\frac1{\eps\cQ}Q(\om,\tfrac12\eps\om_1),\frac1{\eps\cQ}Q'(\om,\tfrac12\eps\om_1)\right)
	h\left(\eps\cQ D(\om,\tfrac12\eps\om_1)\right)\indc_{\om_2/\om_1\in J}\frac{d\om}{\om_1^2}
\\
\to|J|\cdot\tfrac{12}{\pi^2}\iiint_{(0,1)^3}g(Q,Q')h(D)\indc_{Q+Q'>1}\indc_{0<D<\frac{1-Q}{Q'}}\tfrac{dQdQ'dD}{Q}
\\
+
|J|\cdot\tfrac{12}{\pi^2}\iiint_{(0,1)^3}g(Q,Q')h(D)\indc_{Q<Q'}\indc_{Q+Q'>1}\indc_{\frac{1-Q}{Q'}<D<1}\tfrac{dQdQ'dD}{Q}	
\ea
$$
as $\eps\to 0$, which establishes (\ref{Lim-int-f-Q}). 

On account of (\ref{Diff-eps-Q}), this proves the convergence announced in Lemma \ref{L-DistQQ'D}.
\end{proof}

\subsection{Asymptotic distribution of $(A,B,Q)$}
\label{SS-DistABQ}
%%%%%%%%%%%%%%%%%%%%%%%%%%%%%%%%%%%%%%%%%%%%%%%%%%%%%%%%%%%%%%%%%%%

Next we compute the image the probability measure $\l$ under the map $(Q,Q',D)\mapsto(A,B,Q)$ defined in Proposition 
\ref{P-FareyABQ}. In other words:

\begin{Lem}
\label{L-DistABQ}
Let $g\in C([0,1]^3)$. Then
$$
\ba
\int_{\bS^1_+}g(A(\om,\tfrac12\eps\om_1),B(\om,\tfrac12\eps\om_1),Q(\om,\tfrac12\eps\om_1))\frac{d\om}{\om_1^2}
\\
\to\int_{[0,1]^3}g(A,B,Q)d\nu(A,B,Q)
\ea
$$
as $\eps\to 0^+$, where $\nu$ is the probability measure on $[0,1]^3$ given by
$$
d\nu(A,B,Q)\!=\!{\frac{12}{\pi^2}}\indc_{0<A<1}\indc_{0<B<1-A}\indc_{0<Q<\frac1{2-A-B}}\frac{dAdBdQ}{1-A}\,.
$$
\end{Lem}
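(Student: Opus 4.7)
My plan is to combine Lemma~\ref{L-DistQQ'D} with Proposition~\ref{P-FareyABQ}: the former identifies the limiting joint distribution of $(Q,Q',D)$ as $\l$, and the latter expresses $(A,B,Q)$ as a function of $(Q,Q',D)$ via $A = 1-D$, $Q$ unchanged, and $B = b - \lfloor b/D\rfloor D$ with $b = (Q-1+Q'D)/Q$. Applying Lemma~\ref{L-DistQQ'D} to the bounded, $\l$-a.e.\ continuous test function $\phi(Q,Q',D) := g(1-D,B(Q,Q',D),Q)$ (and letting the interval $J$ in that lemma approach $(0,1)$ via a standard monotone exhaustion) reduces the problem to showing that the pushforward $\Phi_{*}\l$ of $\l$ under $\Phi:(Q,Q',D)\mapsto(A,B,Q)$ coincides with $\nu$.

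For the pushforward, I would fix $(Q,D)$ and substitute $Q' = (Qb-Q+1)/D$, giving $dQ' = (Q/D)\,db$, so that the density $\tfrac{12}{\pi^{2}}\,dQ\,dQ'\,dD/Q$ of $\l$ transforms into $\tfrac{12}{\pi^{2}}\,db\,dQ\,dD/D$. Writing $b = kD + B$ with $k\in\bZ$ and $B\in[0,D)$ (so $B$ is exactly the second coordinate of $\Phi$) and changing $A = 1-D$ yields
$$
\int g\,d\Phi_{*}\l \;=\; \tfrac{12}{\pi^{2}}\iiint_{(0,1)^{3}} g(A,B,Q)\,N(A,B,Q)\,\frac{dA\,dB\,dQ}{1-A},
$$
where $N(A,B,Q)$ counts the integers $k$ for which $(Q,Q'(k,B),1-A)\in\Supp(\l)=R_{1}\cup R_{2}$. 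The key claim is then $N(A,B,Q) = \indc_{0<Q<1/(2-A-B)}$ a.e.\ on $(0,1)\times(0,1-A)\times(0,1)$; substituting this into the above recovers exactly $\int g\,d\nu$.

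To prove the claim, I would translate the conditions defining $R_{1}$ and $R_{2}$ into conditions on $b$. Using $Q'= (Qb-Q+1)/D$, one sees that $b<0$ corresponds to the first piece of $\l$ and $b>0$ to the second, while $Q'\in(0,1)$, $Q+Q'>1$ and $Q<Q'$ become the linear bounds $-A(1-Q)/Q < b < \min(0,1-A/Q)$ (for $R_{1}$) and $\max(0,2-A-1/Q) < b < 1-A/Q$ (for $R_{2}$, nonempty only when $A<Q<1$). A short arithmetic check shows the total length of this valid $b$-range equals $D = 1-A$ when $Q\le 1/(2-A)$ and equals $(1-Q)/Q < D$ when $Q > 1/(2-A)$; since the lattice $\{kD+B:k\in\bZ\}$ has spacing $D$, this forces $N\in\{0,1\}$. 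A direct case analysis ($k=-1$ meets the $R_{1}$ sub-interval and $k=0$ meets the $R_{2}$ sub-interval when $Q>A$; a single negative $k$ meets the single $R_{1}$ sub-interval when $Q<A$) then identifies the set of $B$ with $N=1$ as $(\max(0,2-A-1/Q),\,1-A)$, which is precisely $\{B\in(0,1-A):Q<1/(2-A-B)\}$. The main obstacle is this case analysis, since the three regimes $Q<A$, $A<Q\le 1/(2-A)$, $Q>1/(2-A)$ behave differently; the length identity acts as a sanity check preventing $N\ge 2$, and the matching of endpoints $1-A/Q = (Q-A)/Q$ between the $k=0$ and $k=-1$ pieces is what produces the clean threshold $Q=1/(2-A-B)$ bounding $\Supp(\nu)$.
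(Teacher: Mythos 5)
Your proposal is correct and follows essentially the same route as the paper: push $\l$ forward first to $(A,b,Q)$ (you compute the same Jacobian factor $D/Q=(1-A)/Q$), then reduce $b$ modulo $D=1-A$ to get $B$, and show the resulting lattice-point count equals $\indc_{Q<1/(2-A-B)}$. The only differences are organizational — the paper carries out the counting by manipulating floor-function identities for $M(A,B,Q)$, whereas you argue via the total $b$-length ($=D$ when $Q\le 1/(2-A)$, $=(1-Q)/Q$ otherwise) together with endpoint matching at $b=1-A/Q$; and you make explicit the passage from $J\subset(0,1)$ to the full octant and the use of $\l$-a.e.\ continuity of the floor-function test map, both of which the paper leaves implicit.
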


\begin{proof}
We first compute the image $\Phi_*\l$ of the probability measure $\l$ in Lemma \ref{L-DistQQ'D} under the map 
$$
\ba
\Phi:\,&[0,1]^3\to[0,1]\times\bR\times[0,1]
\\
&(Q,Q',D)\mapsto(A,b,Q)=(1-D,\tfrac{Q-1+Q'D}{Q},Q)
\ea
$$
The Jacobian of $\Phi$ is 
$$
\hbox{det}\nabla\Phi(Q,Q',D)
=\left|\begin{matrix} 0 &\frac{1-Q'D}{Q^2} &1\\ {} &{} &\\
0 &\frac{D}Q &0\\ {} &{} &{}\\ -1 &\frac{Q'}{Q} &0\end{matrix}\right|
=\frac{D}Q=\frac{1-A}Q\,,
$$
A straightforward computation shows that
$$
\ba
d\Phi_*\lambda(A,b,Q)=
\\
\tfrac{12}{\pi^2}\indc_{0<A,Q<1}\indc_{1-\frac{1}{Q}<b<1-\frac{A}{Q}}\left(\indc_{A-\frac{A}{Q}<b<0}
	+\indc_{b>0}\indc_{b>2-A-\frac{1}{Q}}\right)\frac{dAdbdQ}{1-A}\,.
\ea
$$
This expression can be put in the form
\begin{equation}
\label{Phi*lambda}
d\Phi_*\lambda(A,b,Q)=
	\tfrac{12}{\pi^2}\indc_{0<A,Q<1}(\indc_{b\in\Lambda_1(A,Q)}+\indc_{b\in\Lambda_2(A,Q)})\frac{dAdbdQ}{1-A}
\end{equation}
where
$$
\ba
\Lambda_1(A,Q)&=(A-\tfrac{A}Q,(1-\tfrac{A}Q)\wedge 0)
\\
\Lambda_2(A,Q)&=((2-A-\tfrac1Q)\vee 0,(1-\tfrac{A}Q))
\ea
$$
The probability measure $\nu$ is the image of $\Phi_*\lambda$ under the map
$$
\ba
\Psi:\,&[0,1]\times\bR\times[0,1]\to[0,1]^3
\\
&(A,b,Q)\mapsto(A,B,Q)=(A,b-[\tfrac{b}{1-A}](1-A),Q)
\ea
$$
In other words,
\begin{equation}
\label{Fla1-nu}
\ba
d\nu(A,B,Q)&=\tfrac{12}{\pi^2}\indc_{0<A,Q<1}\indc_{0<B<1-A}\frac{dAdbdQ}{1-A}
\\
&\cdot\sum_{n\in\bZ}\left(\indc_{\Lambda_1(A,Q)}(B+n(1-A))+\indc_{\Lambda_2(A,Q)}(B+n(1-A))\right)
\\
&=\tfrac{12}{\pi^2}M(A,B,Q)\indc_{0<A,Q<1}\indc_{0<B<1-A}\frac{dAdbdQ}{1-A}
\ea
\end{equation}
where
\begin{equation}
\label{Fla1-M}
\ba
M(A,B,Q)&=\#\{n\in\bZ\,|\,B+n(1-A)\in\Lambda_1(A,Q)\}\,,
\\
&+\#\{n\in\bZ\,|\,B+n(1-A)\in\Lambda_2(A,Q)\}\,.
\ea
\end{equation}
Whenever $u,v\notin\bZ$, one has
$$
\#\{m\in\bZ\,|\,m\in(u,v)\}=([v]-[u])_+\,.
$$

Hence, for a.e. $A,Q\in[0,1]$
$$
\ba
\#\{n\in\bZ\,|\,B+n(1-A)\in\Lambda_1(A,Q)\}
\\
=\left[\frac{(1-A/Q)\wedge 0-B}{1-A}\right]-\left[\frac{A-A/Q-B}{1-A}\right]
\ea
$$
--- since
$$
\frac{(1-A/Q)\wedge 0-B}{1-A}-\frac{A-A/Q-B}{1-A}=\frac{(1-A)\wedge(A/Q-A)}{1-A}\ge 0\,.
$$
For a.e. $A,Q\in[0,1]$, if $Q\le A$
$$
\ba
\left[\frac{(1-A/Q)\wedge 0-B}{1-A}\right]-\left[\frac{A-A/Q-B}{1-A}\right]
\\
=
\left[\frac{1-A/Q-B}{1-A}\right]-\left[\frac{A-A/Q-B}{1-A}\right]
\\
=\left[\frac{A-A/Q-B}{1-A}+1\right]-\left[\frac{A-A/Q-B}{1-A}\right]=1\,.
\ea
$$

Likewise, for a.e. $A,Q\in[0,1]$
$$
\ba\#\{n\in\bZ\,|\,B+n(1-A)\in\Lambda_2(A,Q)\}
\\
=\indc_{Q>A}\left(\left[\frac{1-A/Q-B}{1-A}\right]-\left[\frac{(2-A-1/Q)\vee 0-B}{1-A}\right]\right)
\ea
$$
--- observe that, whenever $Q\le A$, one has $2-A-1/Q\le 2-A-1/A\le 0$, so that
$$
\frac{1-A/Q-B}{1-A}-\frac{(2-A-1/Q)\vee 0-B}{1-A}=\frac{1-A/Q}{1-A}\le 0\,,
$$
while, if $Q>A$,
$$
\ba
\frac{1-A/Q-B}{1-A}-\frac{(2-A-1/Q)\vee 0-B}{1-A}
\\
=
\frac{(1-A/Q)\wedge(A-1)(1-1/Q)}{1-A}\ge 0\,.
\ea
$$

Therefore
$$
\ba
\#\{n\in\bZ\,|\,B+n(1-A)\in\Lambda_1(A,Q)\}
\\
+
\#\{n\in\bZ\,|\,B+n(1-A)\in\Lambda_2(A,Q)\}
\\
=\indc_{Q\le A}+\indc_{Q>A}\Bigg(\left[\frac{(1-A/Q)\wedge 0-B}{1-A}\right]-\left[\frac{A-A/Q-B}{1-A}\right]
\\
+\left[\frac{1-A/Q-B}{1-A}\right]-\left[\frac{(2-A-1/Q)\vee 0-B}{1-A}\right]\Bigg)
\\
=
\indc_{Q\le A}+\indc_{Q>A}\Bigg(\left[\frac{(1-A/Q)\wedge 0-B}{1-A}\right]+1
\\
-\left[\frac{(2-A-1/Q)\vee 0-B}{1-A}\right]\Bigg)
\\
=1+\indc_{Q>A}\Bigg(\left[\frac{-B}{1-A}\right]-\left[\frac{(2-A-1/Q)\vee 0-B}{1-A}\right]\Bigg)\,.
\ea
$$
Now, if $Q\le\frac1{2-A}$,
$$
\left[\frac{(2-A-1/Q)\vee 0-B}{1-A}\right]=\left[\frac{-B}{1-A}\right] 
$$
while $Q>A$ if $Q>\frac1{2-A}$ since $A+1/A>2$, so that
$$
\ba
M(A,B,Q)&=\#\{n\in\bZ\,|\,B+n(1-A)\in\Lambda_1(A,Q)\}
\\
&+\#\{n\in\bZ\,|\,B+n(1-A)\in\Lambda_2(A,Q)\}
\\
&=1+\indc_{Q>\frac1{2-A}}\Bigg(\left[\frac{-B}{1-A}\right]-\left[\frac{2-A-1/Q-B}{1-A}\right]\Bigg)\,.
\ea
$$
If $\frac1{2-A}<Q\le 1$ and $0<B<1-A$, one has
$$
-(1-A)<2-A-1/Q-B<(1-A)
$$
so that
$$
\ba
\left[\frac{-B}{1-A}\right]&=-1\,,&&
\\
\left[\frac{2-A-1/Q-B}{1-A}\right]&=0\,,&&\hbox{ if }2-A-B\ge 1/Q\,,
\\
\left[\frac{2-A-1/Q-B}{1-A}\right]&=-1\,,&&\hbox{ if }2-A-B<1/Q\,.
\ea
$$
Therefore
\begin{equation}
\label{Fla2-M}
\ba
M(A,B,Q)=1+\indc_{Q>\frac1{2-A}}\Bigg(\left[\frac{-B}{1-A}\right]-\left[\frac{2-A-1/Q-B}{1-A}\right]\Bigg)
\\
=1-\indc_{Q>\frac1{2-A}}\indc_{2-A-B\ge 1/Q}=\indc_{Q<\frac{1}{2-A-B}}
\ea
\end{equation}
whenever $B\not=0$, and this establishes the formula for $\nu$ in the lemma.
\end{proof}

\subsection{Computation of $\mathcal{L}(F)$}
\label{SS-ComputL(F)}
%%%%%%%%%%%%%%%%%%%%%%%%%%%%%%%%%%%%%%%%%%%%%%%%%%%%%%%%%%%%%%%%%%%

With the help of Lemma \ref{L-DistABQ}, we finally compute $\cL(F)$.

\begin{proof}[Proof of Proposition \ref{P-3ObstProba}]
Let $F\in C(\bK)$, and set 
$$
F_+(A,B,Q)=\tfrac12\left(F(A,B,Q,+1)+F(A,B,Q,-1)\right)\,;
$$
obviously $F_+\in C([0,1]^3)$. 

By Lemma \ref{L-DistABQ}
$$
\ba
\frac1{\ln(\eta_*/\eta)}\int_\eta^{\eta_*}\left(\int_{\bS^1_+}
	F_+(A(\om,\tfrac12\eps\om_1),B(\om,\tfrac12\eps\om_1),Q(\om,\tfrac12\eps\om_1))\frac{d\om}{\om_1^2}\right)\frac{d\eps}{\eps}
\\
\to\tfrac{\pi}4\int_{[0,1]^3}F_+(A,B,Q)d\nu(A,B,Q)
\ea
$$
as $\eta\to 0^+$, for each $\eta_*>0$.

On the other hand, by Fubini's theorem
$$
\ba
\frac1{\ln(\eta_*/\eta)}\int_\eta^{\eta_*}\left(\int_{\bS^1_+}
	F_+(A(\om,\tfrac12\eps\om_1),B(\om,\tfrac12\eps\om_1),Q(\om,\tfrac12\eps\om_1))\frac{d\om}{\om_1^2}\right)	\frac{d\eps}{\eps}
\\
=\int_{\bS^1_+}\left(\frac1{\ln(\eta_*/\eta)}\int_\eta^{\eta_*}
	F_+(A(\om,\tfrac12\eps\om_1),B(\om,\tfrac12\eps\om_1),Q(\om,\tfrac12\eps\om_1))\frac{d\eps}{\eps}\right)\frac{d\om}{\om_1^2}
\\
=\int_{\bS^1_+}\left(\frac1{\ln(\eta_*/\eta)}\int_{\om_1\eta/2}^{\om_1\eta_*/2}
	F_+(A(\om,r),B(\om,r),Q(\om,r))\frac{dr}{r}\right)	\frac{d\om}{\om_1^2}\,.
\ea
$$
By Theorem \ref{T-ErgoABQsi}, the inner integral on the right hand side of the equality above converges a.e. in $\om\in\bS^1_+$ to 
$\cL(F_+)$. By dominated convergence, we therefore conclude that
$$
\cL(F_+)=\int_{[0,1]^3}F_+(A,B,Q)d\nu(A,B,Q)\,.
$$
Finally, according to the amplification of Theorem \ref{T-ErgoABQsi}
$$
\cL(F)=\int_{[0,1]^3}F_+(A,B,Q)d\nu(A,B,Q)
$$
or, in other words,
$$
\cL(F)=\int_{[0,1]^3\times\{\pm1\}}F(A,B,Q,\si)d\mu(A,B,Q,\si)
$$
where
$$
d\mu(A,B,Q,\si)=\tfrac12d\nu(A,B,Q)\otimes(\de_{\si=+1}+\de_{\si=-1})\,.
$$
\end{proof}

%%%%%%%%%%%%%%%%%%%%%%%%%%%%%%%%%%%%%%%%%%%%%%%%%%%%%%%%%%%%%%%%%%%

\section{The transition probability: a proof of Theorem \ref{T-LimTr}}
\label{S-TransitProba}
%%%%%%%%%%%%%%%%%%%%%%%%%%%%%%%%%%%%%%%%%%%%%%%%%%%%%%%%%%%%%%%%%%%

\subsection{Computation of $P(S,h|h')$}
\label{SS-ComputP}
%%%%%%%%%%%%%%%%%%%%%%%%%%%%%%%%%%%%%%%%%%%%%%%%%%%%%%%%%%%%%%%%%%%

We first establish that the image of the probability $\nu$ defined in Lemma \ref{L-DistABQ} under the map 
$$
[0,1]^3\ni(A,B,Q)\mapsto\bT_{A,B,Q,+1}(h')\in\bR_+\times[-1,1]
$$
is of the form $P_+(S,h|h')dSdh$, with a probability density $P_+$ which we compute in the present section. Let 
$f\in C_c(\bR_+\times[-1,1])$; the identity
$$
\int_{\bR_+\times[-1,1]}f(S,h)P_+(S,h|h')dSdh=\int_{[0,1]^3}f(\bT_{A,B,Q,+1}(h'))d\nu(A,B,Q)
$$
defining $P_+(S,h|h')$ is recast as 
$$
\ba
\int_{\bR_+\times[-1,1]}f(S,h)P_+(S,h|h')dSdh
\\
=\int_{[0,1]^3}f(Q,h'-2(1-A))\indc_{1-2A<h'<1}d\nu(A,B,Q)
\\
+\int_{[0,1]^3}f\left(\tfrac{1-Q(1-B)}{1-A}, h'+2(1-B)\right)\indc_{-1<h'<-1+2B}d\nu(A,B,Q)
\\
+\int_{[0,1]^3}f\left(\tfrac{1-Q(B-A)}{1-A}, h'+2(A-B)\right)\indc_{-1+2B<h'<1-2A}d\nu(A,B,Q)
\\
=I+II+III\,.
\ea
$$

In I, we first integrate in $B$, since the argument of $f$ does not involve $B$. Observing that $0\vee(2-A-\tfrac1Q)<B<1-A$, we have
to distinguish two cases, namely $0<Q<\tfrac1{2-A}$ and $\tfrac1{2-A}<Q<1$, which leads to 
$$
I=\tfrac{12}{\pi^2}
	\iint\!f(Q,h'-2(1-A))\indc_{(1-h')/2<A<1}\left(\indc_{0<Q<\frac1{2-A}}\!+\!\tfrac{1-Q}{Q(1-A)}\indc_{\frac1{2-A}<Q<1}\right)dQdA\,.
$$
In the right-hand side of the equality above, we set $S=Q$ and $h=h'-2(1-A)$, to find
$$
I=\tfrac6{\pi^2}\iint f(S,h)\indc_{-1<h<h'<1}\left(\indc_{0<S<\frac1{1+(h'-h)/2}}+\tfrac{2(1-S)}{S(h'-h)}\indc_{\frac1{1+\eta}<S<1}\right)dSdh\,.
$$
With the notation $\eta=\tfrac12|h-h'|$, we see that $I$ can be put in the form
$$
\ba
I&=\tfrac6{\pi^2}\iint f(S,h)\indc_{-1<h<h'<1}\left(S\eta\indc_{0<S<\frac1{1+\eta}}
	+(1-S)\indc_{\frac1{1+(h'-h)/2}<S<1}\right)\frac{dSdh}{S\eta}
\\
&=\tfrac6{\pi^2}\iint f(S,h)\indc_{-1<h<h'<1}\indc_{0<S<1}(S\eta)\wedge(1-S)\frac{dSdh}{S\eta}\,.
\ea
$$

As for II, given $A$, we first make the substitution 
$$
(S,h)=\left(\tfrac{1-Q(1-B)}{1-A}, h'+2(1-B)\right)
$$
whose Jacobian is
$$
\frac{\d(S,h)}{\d(Q,B)}=\left|\begin{matrix}-\tfrac{1-B}{1-A}\,\,\,&0\\ \tfrac{Q}{1-A}&-2\end{matrix}\right|
	=\frac{2(1-B)}{1-A}=\frac{h-h'}{1-A}\,.
$$
Hence
$$
\ba
II=\tfrac{12}{\pi^2}\int_0^1\left(\iint f(S,h)\indc_{\frac1{1-A+(h-h')/2}<S<\frac1{1-A}}\indc_{A<\frac{h-h'}2<1}\tfrac{dSdh}{|h-h'|}\right)dA
\\
=\tfrac{12}{\pi^2}\iint f(S,h)\left(\int_0^1\indc_{\frac1{1-A+(h-h')/2}<S<\frac1{1-A}}\indc_{A<\frac{h-h'}2<1}dA\right)\frac{dSdh}{|h-h'|}\,.
\ea
$$
The inner integral is recast as
$$
\ba
\int_0^1\indc_{\frac1{1-A+(h-h')/2}<S<\frac1{1-A}}\indc_{A<\frac{h-h'}2<1}dA
		\qquad\qquad\qquad\qquad\qquad\qquad\qquad\qquad
\\
=
\indc_{S>0}\indc_{-1<h'<h<1}\int_0^1\indc_{1-1/S<A<1-1/S+(h-h')/2}\indc_{A<\frac{h-h'}2}dA
\\
=
\indc_{S>0}\indc_{-1<h'<h<1}\indc_{S<1}(1+\tfrac{h-h'}2-\tfrac1S)_+
\\
+
\indc_{S>0}\indc_{-1<h'<h<1}\indc_{S>1}(\tfrac{h-h'}2+\tfrac1S-1)_+
\\
=\indc_{S>0}\indc_{-1<h'<h<1}(\tfrac{h-h'}2-|\tfrac1S-1|)_+\,,
\ea
$$
so that, eventually, we find
$$
\ba
II&=\tfrac{12}{\pi^2}\iint f(S,h)\indc_{S>0}\indc_{-1<h'<h<1}(\tfrac{h-h'}2-|\tfrac1S-1|)_+\frac{dSdh}{h-h'}
\\
&=\tfrac{6}{\pi^2}\iint f(S,h)\indc_{S>0}\indc_{-1<h'<h<1}(S\eta-|1-S|)_+\frac{dSdh}{S\eta}
\ea
$$

Now for III. Here we make the substitution
$$
(S,h)=(\tfrac{1-Q(A-B)}{1-A},h'+2(A-B))
$$
whose Jacobian is
$$
\frac{\d(S,h)}{\d(Q,B)}=\left|\begin{matrix}-\tfrac{A-B}{1-A}\,\,\,&0\\ \tfrac{Q}{1-A}&-2\end{matrix}\right|
	=\frac{2(A-B)}{1-A}=\frac{h-h'}{1-A}\,.
$$
This Jacobian vanishes for $A=B$; therefore, we further decompose
$$
III=III_1+III_2
$$
with the notations
$$
\ba
III_1&=\int_{[0,1]^3}f\left(\tfrac{1-Q(B-A)}{1-A}, h'+2(A-B)\right)\indc_{-1+2B<h'<1-2A}\indc_{A<B}d\nu(A,B,Q)\,,
\\
III_2&=\int_{[0,1]^3}f\left(\tfrac{1-Q(B-A)}{1-A}, h'+2(A-B)\right)\indc_{-1+2B<h'<1-2A}\indc_{A>B}d\nu(A,B,Q)\,. 
\ea
$$

We begin with $III_1$. With the substitution above, one has
$$
\ba
III_1=\tfrac{12}{\pi^2}\int_0^1\Bigg(\iint f(S,h)\indc_{2A-1<(h-h')/2<A}\indc_{h'<1-2A}\indc_{-1+2A<h}
\\
\indc_{1/(1-A)<S<1/(1-A+(h-h')/4)}\indc_{-1<h<h'<1}\tfrac{dSdh}{|h-h'|}\Bigg)dA
\\
=\tfrac{12}{\pi^2}\iint f(S,h)\Bigg(\int_0^1\indc_{2A-1<(h-h')/2<A}\indc_{h'<1-2A}\indc_{-1+2A<h}
\\
\indc_{1/(1-A)<S<1/(1-A+(h-h')/4)}dA\Bigg)\indc_{-1<h<h'<1}\tfrac{dSdh}{|h-h'|}\,.
\ea
$$
The inner integral is 
$$
\ba
\int_0^1\indc_{2A-1<(h-h')/2<A}\indc_{h'<1-2A}\indc_{-1+2A<h}\indc_{1/(1-A)<S<1/(1-A+(h-h')/4)}dA
\\
=
\int_0^1\indc_{A<\tfrac12+\tfrac14(h-h')}\indc_{A<\tfrac12(1-h')}\indc_{A<\tfrac12(1+h)}\indc_{1-\tfrac1S+\tfrac14(h-h')<A<1-\tfrac1S}dA
\\
=
((\tfrac12+\tfrac14(h-h'))\wedge\tfrac12(1-h')\wedge\tfrac12(1+h)\wedge(1-\tfrac1S)
\\
-(1-\tfrac1S+\tfrac14(h-h'))\vee 0)_+
\\
=((\tfrac12-\tfrac14|h+h'|-\tfrac14(h'-h))\wedge(1-\tfrac1S)-(1-\tfrac1S-\tfrac14(h'-h)|)\vee 0)_+\,.
\ea
$$
Alternatively, using the relations $(a+c)\vee(b+c)=a\vee b+c$, $(a+c)\wedge (b+c)=a\wedge b+c$, $(-a)\wedge(-b)=-(a\vee b)$ and
$a\vee b+a\wedge b=a+b$, we find that
$$
\ba
((\tfrac12-\tfrac14|h+h'|-\tfrac14(h'-h))\wedge(1-\tfrac1S)-(1-\tfrac1S-\tfrac14(h'-h))\vee 0)_+
\\
=
((-\tfrac12-\tfrac14|h+h'|)\wedge(\tfrac14(h'-h)-\tfrac1S)\!+\!1\!-\!\tfrac14(h'-h)-(1-\tfrac14(h'-h))\vee\tfrac1S+\tfrac1S)_+
\\
=
((1-\tfrac14(h'-h))\wedge\tfrac1S-(\tfrac12+\tfrac14|h+h'|)\vee(\tfrac1S-\tfrac14(h'-h)))_+
\ea
$$
so that
$$
\ba
III_1&=\tfrac{12}{\pi^2}\iint f(S,h)\Bigg(\int_0^1\indc_{2A-1<(h-h')/2<A}\indc_{h'<1-2A}\indc_{-1+2A<h}
\\
&\qquad\qquad\times\indc_{1/(1-A)<S<1/(1-A+(h-h')/4)}dA\Bigg)\indc_{-1<h<h'<1}\tfrac{dSdh}{|h-h'|}
\\
&=\tfrac{12}{\pi^2}\iint f(S,h)\indc_{-1<h<h'<1}\Big((S-\tfrac14S(h'-h))\wedge 1
\\
&\qquad\qquad- (1-\tfrac14S(h'-h))\vee(\tfrac{S}2+\tfrac14S|h+h'|)\Big)_+\frac{dSdh}{S|h-h'|}\,.
\ea
$$
With the notation $\zeta=\tfrac12|h+h'|$, we recast this last expression as
$$
III_1\!=\!\tfrac{6}{\pi^2}\iint\! f(S,h)\indc_{-1<h<h'<1}\left((S\!-\!\tfrac12S\eta)\!\wedge\! 1
	\!-\! (1\!-\!\tfrac12S\eta)\!\vee\!(\tfrac12S\!+\!\tfrac12S\zeta)\right)_+\frac{dSdh}{S\eta}\,.
$$

The computation of $III_2$ is fairly similar. Starting with the same substitution as for $III_1$, we obtain
$$
\ba
III_2=\tfrac{12}{\pi^2}\int_0^1\Bigg(\iint f(S,h)\indc_{2A-1<(h-h')/2<A}\indc_{h'<1-2A}\indc_{-1+2A<h}
\\
\indc_{1/(1-A+(h-h')/4)<s<1/(1-A)}\indc_{-1<h'<h<1}\tfrac{dSdh}{|h-h'|}\Bigg)dA
\\
=\tfrac{12}{\pi^2}\iint f(S,h)\Bigg(\int_0^1\indc_{2A-1<(h-h')/2<A}\indc_{h'<1-2A}\indc_{-1+2A<h}
\\
\indc_{1/(1-A+(h-h')/4)<S<1/(1-A)}dA\Bigg)\indc_{-1<h'<h<1}\frac{dSdh}{|h-h'|}\,.
\ea
$$ 
The inner integral is recast as 
$$
\ba
\int_0^1\indc_{2A-1<(h-h')/2<A}\indc_{h'<1-2A}\indc_{-1+2A<h}\indc_{1/(1-A+(h-h')/4)<S<1/(1-A)}dA
\\
=
\int_0^1\indc_{\tfrac12(h-h')<A}\indc_{A<\tfrac12+\tfrac14(h-h')}\indc_{A<\tfrac12(1-h')}\indc_{A<\tfrac12(1+h)}
	\indc_{1-\tfrac1S<A<1-\tfrac1S+\tfrac14(h-h')}dA
\\
=\int_0^1\indc_{\tfrac12(h-h')<A}\indc_{A<\tfrac12-\tfrac14|h+h'|+\tfrac14(h-h')}
	\indc_{1-\tfrac1S<A<1-\tfrac1S+\tfrac14(h-h')}dA
\\
=((\tfrac12-\tfrac14|h+h'|+\tfrac14(h-h'))\wedge(1-\tfrac1S+\tfrac14(h-h'))-(1-\tfrac1S)\vee\tfrac12(h-h'))_+\,.
\ea
$$
As in the case of $III_1$, we have
$$
\ba
(&(\tfrac12-\tfrac14|h+h'|+\tfrac14(h-h'))\wedge(1-\tfrac1S+\tfrac14(h-h'))-(1-\tfrac1S)\vee\tfrac12(h-h'))_+
\\
&\qquad=(1+\tfrac14(h-h')+(-\tfrac12-\tfrac14|h+h'|)\wedge(-\tfrac1S)-(1-\tfrac1S)\vee\tfrac12(h-h'))_+
\\
&\qquad=(1+\tfrac14(h-h')-(\tfrac12+\tfrac14|h+h'|)\vee\tfrac1S
\\
&\qquad\qquad\qquad-(1-\tfrac1S)-\tfrac12(h-h')+(1-\tfrac1S)\wedge\tfrac12(h-h'))_+
\\
&\qquad=(\tfrac1S-\tfrac14(h-h')-(\tfrac12+\tfrac14|h+h'|)\vee(\tfrac1S)+(1-\tfrac1S)\wedge\tfrac12(h-h'))_+
\\
&\qquad=((1-\tfrac14(h-h'))\wedge(\tfrac1S+\tfrac14(h-h'))	-(\tfrac12+\tfrac14|h+h'|)\vee\tfrac1S)_+
\ea
$$
so that
$$
\ba
III_2&=\tfrac{12}{\pi^2}\int_0^1\Bigg(\iint f(S,h)\indc_{2A-1<(h-h')/2<A}\indc_{h'<1-2A}\indc_{-1+2A<h}
\\
&\qquad\qquad\indc_{1/(1-A+(h-h')/4)<S<1/(1-A)}\indc_{-1<h'<h<1}\tfrac{dSdh}{|h-h'|}\Bigg)dA
\\
&=\tfrac{12}{\pi^2}\iint f(S,h)\indc_{-1<h'<h<1}\Big((S-\tfrac14S(h-h'))\wedge(1+\tfrac14S(h-h'))
\\
&\qquad\qquad\qquad\qquad\qquad\qquad-(\tfrac12S+\tfrac14S|h+h'|)\vee 1\Big)_+\frac{dSdh}{S|h-h'|}
\ea
$$ 
In other words
$$
III_2\!=\!\tfrac{6}{\pi^2}\iint\! f(S,h)\indc_{-1<h<h'<1}\left((S\!-\!\tfrac12S\eta)\!\wedge\! (1\!+\!\tfrac12S\eta)
	\!-\!(\tfrac12S\!+\!\tfrac12S\zeta)\!\vee\!1\right)_+\frac{dSdh}{S\eta}\,.
$$

Summing up the contributions $I$, $II$, $III_1$ and $III_2$, we find that
$$
\ba
P_+(S,h|h')&=\frac6{\pi^2S\eta}\indc_{-1<h<h'<1}\indc_{0<S<1}(S\eta)\wedge(1-S)
\\
&+
\frac{6}{\pi^2S\eta}\indc_{-1<h'<h<1}\indc_{S>0}(S\eta-|1-S|)_+
\\
&+
\frac{6}{\pi^2S\eta}\indc_{-1<h<h'<1}\indc_{S>0}
	\left((S\!-\!\tfrac12S\eta)\!\wedge\! 1\!-\! (1\!-\!\tfrac12S\eta)\!\vee\!(\tfrac12S\!+\!\tfrac12S\zeta)\right)_+
\\
&+
\frac{6}{\pi^2S\eta}\indc_{-1<h'<h<1}\indc_{S>0}
	\left((S\!-\!\tfrac12S\eta)\!\wedge\! (1\!+\!\tfrac12S\eta)\!-\!(\tfrac12S\!+\!\tfrac12S\zeta)\!\vee\!1\right)_+\,.
\ea
$$
Observe that
$$
\indc_{S>0}(S\eta-|1-S|)_+=\indc_{\frac1{1+\eta}<S<\frac1{1-\eta}}(S\eta-|1-S|)\,.
$$
On the other hand, if
$$
(S\!-\!\tfrac12S\eta)\!\wedge\! 1\!-\! (1\!-\!\tfrac12S\eta)\!\vee\!(\tfrac12S\!+\!\tfrac12S\zeta)\ge 0
$$
then $(S\!-\!\tfrac12S\eta)-(1\!-\!\tfrac12S\eta)\ge 0$ so that $S\ge 1$; likewise $1\!-\!(\tfrac12S\!+\!\tfrac12S\zeta)\ge 0$ so that $S\le 2$.
Hence
$$
\ba
\indc_{S>0}\left((S\!-\!\tfrac12S\eta)\!\wedge\! 1\!-\! (1\!-\!\tfrac12S\eta)\!\vee\!(\tfrac12S\!+\!\tfrac12S\zeta)\right)_+
\\
=\indc_{1<S<2}\left((S\!-\!\tfrac12S\eta)\!\wedge\! 1\!-\! (1\!-\!\tfrac12S\eta)\!\vee\!(\tfrac12S\!+\!\tfrac12S\zeta)\right)_+\,.
\ea
$$
By the same token
$$
(S\!-\!\tfrac12S\eta)\!\wedge\! (1\!+\!\tfrac12S\eta)\!-\!(\tfrac12S\!+\!\tfrac12S\zeta)\!\vee\!1\ge 0
$$
implies that $(S\!-\!\tfrac12S\eta)-1\ge 0$, so that $S\ge 1$, and hence 
$$
\ba
\indc_{S>0}\left((S\!-\!\tfrac12S\eta)\!\wedge\! (1\!+\!\tfrac12S\eta)\!-\!(\tfrac12S\!+\!\tfrac12S\zeta)\!\vee\!1\right)_+
\\
=\indc_{S>1}\left((S\!-\!\tfrac12S\eta)\!\wedge\! (1\!+\!\tfrac12S\eta)\!-\!(\tfrac12S\!+\!\tfrac12S\zeta)\!\vee\!1\right)_+\,. 
\ea
$$
Finally
$$
\ba
P_+(S,h|h')&=\frac6{\pi^2S\eta}\indc_{-1<h<h'<1}\indc_{0<S<1}(S\eta)\wedge(1-S)
\\
&+\frac{6}{\pi^2S\eta}\indc_{-1<h'<h<1}\indc_{\frac1{1+\eta}<S<\frac1{1-\eta}}(S\eta-|1-S|)
\\
&+\frac{6}{\pi^2S\eta}\indc_{-1<h<h'<1}\indc_{1<S<2}
	\left((S\!-\!\tfrac12S\eta)\!\wedge\! 1\!-\! (1\!-\!\tfrac12S\eta)\!\vee\!(\tfrac12S\!+\!\tfrac12S\zeta)\right)_+
\\
&+\frac{6}{\pi^2S\eta}\indc_{-1<h'<h<1}\indc_{S>1}
	\left((S\!-\!\tfrac12S\eta)\!\wedge\! (1\!+\!\tfrac12S\eta)\!-\!(\tfrac12S\!+\!\tfrac12S\zeta)\!\vee\!1\right)_+\,.
\ea
$$
By formula \ref{TransitLimit},
$$
\bT_{A,B,Q,-}(-h')=(L,-h)\Longleftrightarrow\bT_{A,B,Q,+}(h')=(L,h)
$$
so that
$$
P(S,h|h')=\tfrac12\left(P_+(S,h|h')+P_+(S,-h|-h')\right)\,,
$$
thereby leading to formula (\ref{LimTransitProba}).

\subsection{Proof of the simplified formula (\ref{FlaTransitProbaSimpl})}
\label{SS-FlaPSimpl}
%%%%%%%%%%%%%%%%%%%%%%%%%%%%%%%%%%%%%%%%%%%%%%%%%%%%%%%%%%%%%%%%%%%

Assume that $h>|h'|$ so that 
$$
\left\{
\begin{array}{l}
\eta=\tfrac12(h-h')
\\ \\
\zeta=\tfrac12(h+h')
\end{array}
\right.
\quad\Rightarrow 
\left\{
\begin{array}{l}
h\,=(\zeta+\eta)
\\ \\
h'=(\zeta-\eta)
\end{array}
\right.
$$
and denote
$$
L=S-\tfrac12S\eta\,,\quad M=\tfrac12S+\tfrac12S\zeta\,,\quad N=\tfrac12S\eta\,.
$$
Whenever $S\ge 1$, the reader will easily check that (\ref{FlaTransitProba}) can be written as
$$
\tfrac{2\pi^2}{3}NP(S,h|h')=(1+N-L)_++(L\wedge(1+N)-M\vee 1)_++(L\wedge 1-M\vee (1-N))_+\,.
$$

Since $L\ge M$, the expression above can be reduced after checking the three cases $L\le 1$, $1<L<1+N$ and $L\ge 1+N$. 
One finds that
$$
(1+N-L)_++(L\wedge(1+N)-M\vee 1)_+=(1+N-M\vee(L\wedge 1))_+\,.
$$

Then
$$
\ba
\tfrac{2\pi^2}{3}NP(S,h|h')&=(1+N-L)_++(L\wedge(1+N)-M\vee 1)_+
\\
&+(L\wedge 1-M\vee (1-N))_+
\\
&=(1+N-M\vee(L\wedge 1))_++(L\wedge 1-M\vee (1-N))_+
\\
&=\left\{
\begin{array}{ll}
0\quad&\hbox{ if }M\ge 1+N\,,
\\
1+N-M&\hbox{ if }1-N<M<1+N\,,
\\
2N&\hbox{ if }M\le 1-N\,.
\end{array}
\right.
\ea
$$
Since
$$
\left\{\begin{array}{l}
M+N=\tfrac12S(1+h)
\\	\\
M-N=\tfrac12S(1+h')
\end{array}
\right.
$$
the formulas above can be recast as
$$
\tfrac{2\pi^2}{3}NP(S,h|h')
=\left\{
\begin{array}{ll}
0\quad&\hbox{ if }\tfrac12S(1+h')\ge 1\,,
\\ \\
1-\tfrac12S(1+h')&\hbox{ if }\tfrac12S(1+h')<1<\tfrac12S(1+h)\,,
\\ \\
\tfrac12S(h-h')&\hbox{ if }1\le\tfrac12S(1+h)\,,
\end{array}
\right.
$$
which holds whenever $S\ge 1$.

On the other hand, if $S<1$, the last two terms on the right hand side of (\ref{FlaTransitProba}) vanish identically so that
$$
\tfrac{2\pi^2}{3}NP(S,h|h')=(2N)\wedge(1-S)+(2N-(1-S))_+=2N=\tfrac12S(h-h').
$$
Putting together these last two formulas leads to (\ref{FlaTransitProbaSimpl}).

\newpage
\subsection{Proof of Corollary \ref{C-LimTr}}
\label{SS-CorLimTr}
%%%%%%%%%%%%%%%%%%%%%%%%%%%%%%%%%%%%%%%%%%%%%%%%%%%%%%%%%%%%%%%%%%%

As for the statements in Corollary \ref{C-LimTr}, observe that the symmetries (\ref{SymP}) and the fact that $P$ is piecewise 
continuous on $\bR_+\times[-1,1]\times[1,1]$ follow from (\ref{FlaTransitProba}). The first identity in (\ref{Int-Pdsdh=1}) follows from 
the fact that, by definition, $(s,h)\mapsto P(s,h|h')$ is a probability density, while the second identity there follows from the first and 
the symmetry $P(s,h|h')=P(s,h'|h)$. 

If $S\ge 4$ and $h,h'\in]-1,1[$ satisfy $|h'|\le h$, the simplified formula (\ref{FlaTransitProbaSimpl}) implies that
$$
P(S,h|h')\le\frac{6}{\pi^2S(h-h')}\indc_{1+h'<\frac2S}\,.
$$
On the other hand, $1+h'<\frac2S$ and $S\ge 4$ imply that $h'<-\tfrac12$, so that $h\ge|h'|>\tfrac12$; therefore $h-h'>1$ and the
inequality above entails (\ref{BoundP}). 

Starting from (\ref{BoundP}), observe that, because of the symmetries (\ref{SymP}),
$$
\ba
\int_{-1}^1\int_{-1}^1P(s,h|h')dhdh'&=4\iint_{0\le|h'|<h\le 1}P(s,h|h')dhdh'
\\
&\le\frac{24}{\pi^2S}\iint_{0\le|h'|<h\le 1}\indc_{1+h'<\frac2S}dhdh'=\frac{48}{\pi^2S^3}\,,
\ea
$$
which is (\ref{BoundIntP}).

%%%%%%%%%%%%%%%%%%%%%%%%%%%%%%%%%%%%%%%%%%%%%%%%%%%%%%%%%%%%%%%%%%%

\section{Proof of Theorem \ref{T-PropEquil}}
\label{S-PropEquil}
%%%%%%%%%%%%%%%%%%%%%%%%%%%%%%%%%%%%%%%%%%%%%%%%%%%%%%%%%%%%%%%%%%%

The existence of the integral defining $E$ follows from the positivity of $P(s,h|h')$ and the second identity in (\ref{Int-Pdsdh=1}).

That $E(0,h)=1$ for $|h|\le 1$ follows from the formula defining $E$ and the second identity in (\ref{Int-Pdsdh=1}): this establishes the 
first part of statement 1). 

The definition of $E$ and the first formula in statement 1) show that
$$
-\d_sE(s,h)=\int_{-1}^12P(2s,h|h')dh'=\int_{-1}^12P(2s,h|h')E(0,h')dh'\,,\quad s>0\,,\,\,|h|\le 1\,.
$$
Using again the positivity of $P(s,h|h')$ and the second identity in (\ref{Int-Pdsdh=1}) shows that, for each $h\in[-1,1]$, the function 
$(s,h')\mapsto P(s,h|h')$ belongs to $L^1(\bR_+\times[-1,1])$. Therefore $E(s,h)\to 0$ for each $h\in[-1,1]$ as $s\to+\infty$. Thus each 
function $F\equiv F(s,h)$ of the form $F(s,h)=CE(s,h)$ satisfies both conditions in statement 2).

Conversely, let $F\equiv F(s,h)$ satisfy the conditions in statement 2), and let $\Phi(h):=F(0,h)$ for a.e. $h\in[-1,1]$. Integrating both
sides of the differential equation satisfied by $F$ in $s\in\bR_+$ yields
\begin{equation}
\lb{EqPhi}
\Phi(h)=\int_{-1}^1\Pi(h|h')\Phi(h')dh'
\end{equation}
since $F(s,h)$ vanishes as $s\to+\infty$, with 
$$
\Pi(h|h')=\int_0^\infty P(S,h|h')dS\,.
$$

Multiplying each side of the identity (\ref{EqPhi}) by $\Phi(h)$, and integrating in $h\in[-1,1]$, we see that
\be\lb{EigenEq}
\int_{-1}^1\Phi(h)^2dh=\int_{-1}^1\int_{-1}^1\Pi(h|h')\Phi(h)\Phi(h')dhdh'\,.
\ee

Observe that, by (\ref{SymP})
$$
\Pi(h'|h)=\Pi(h|h')\,,\quad\hbox{ for a.e. }h,h'\in[-1,1]\,,
$$
and
$$
\int_{-1}^1\Pi(h|h')dh'=\int_{-1}^1\Pi(h'|h)dh'=1\,,\quad\hbox{ for a.e. }h\in[-1,1]\,,
$$
in view of (\ref{Int-Pdsdh=1}). 

Therefore
$$
\int_{-1}^1\Phi(h)^2dh=\int_{-1}^1\left(\int_{-1}^1\Pi(h|h')dh'\right)\Phi(h)^2dh
	=\int_{-1}^1\left(\int_{-1}^1\Pi(h|h')dh\right)\Phi(h')^2dh\,,
$$
so that (\ref{EigenEq}) becomes
$$
\ba
0&=
\int_{-1}^1\Phi(h)^2dh-\int_{-1}^1\int_{-1}^1\Pi(h|h')\Phi(h)\Phi(h')dhdh'
\\
&=
\int_{-1}^1\int_{-1}^1\Pi(h|h')\tfrac12(\Phi(h)^2+\Phi(h')^2)dhdh'-\int_{-1}^1\int_{-1}^1\Pi(h|h')\Phi(h)\Phi(h')dhdh'
\\
&=
\int_{-1}^1\int_{-1}^1\Pi(h|h')\tfrac12(\Phi(h)-\Phi(h'))^2dh\,.
\ea
$$
Since $\Pi(h|h')>0$ a.e. in $h,h'\in[-1,1]$ as can be seen from the explicit formula (\ref{FlaTransitProba-hh'}), this last equality implies 
that
$$
\Phi(h)=C\quad\hbox{ a.e. in }h\in[-1,1]\,,
$$
for some nonnegative constant $C$. Therefore
$$
-\d_sF(s,h)=C\int_{-1}^12P(2s,h|h')dh'\,,\,\,\,\hbox{ and }\lim_{s\to+\infty}F(s,h)=0\quad\hbox{Êa.e. in }h\in[-1,1]\,,
$$
so that
$$
F(s,h)=C\int_{2s}^\infty\int_{-1}^1P(\tau,h|h')dh'd\tau=CE(s,h)\,,
$$
which proves the uniqueness part of statement 2).

Now for statement 3); by definition of $P(s,h|h')$, for each $t>0$
$$
\ba
\int_{-1}^1E(s,h)dh&=\int_{2s}^\infty\int_{-1}^1\int_{-1}^1P(t,h|h')dtdhdh'
\\
&=\lim_{\eps\to 0^+}\frac1{|\ln\eps|}\int_\eps^{1/4}\int_{-1}^1\indc_{[2s,+\infty)\times[-1,1]}(T_r(h',\om))dh'\frac{dr}r
\\
&=\lim_{\eps\to 0^+}\frac1{|\ln\eps|}\int_\eps^{1/4}\int_{-1}^1\indc_{[2s,+\infty)}(2r\tau_r(Y_r(h',\om))dh'\frac{dr}r
\\
&=2\lim_{\eps\to 0^+}\frac1{|\ln\eps|}\int_\eps^{1/4}\nu_r(\{(x,\om)\in\Gamma_r^+/\bZ^2\,|\,2r\tau_r(x,\om)\ge 2s\}\frac{dr}r
\ea
$$
where $\nu_r$ is the probability measure on $\Gamma_r^+/\bZ^2$ that is proportional to $\om\cdot n_xdxd\om$. Using formula
(1.3) in \cite{BocaZaharescu}, which is a straightforward consequence of variant of Santal\'o's formula established in Lemma 3 of
\cite{Dumas2Golse1996}, we conclude that
$$
\int_{-1}^1E(s,h)dh=-2p'(2s)\,,\quad s\ge 0\,,
$$
which is the first formula in statement 3). The second formula there is a consequence of the expression of $p''(s)$ as a power
series in $1/s$ given in formula (1.5) of \cite{BocaZaharescu}.

Finally, we establish the second formula in statement 1). Indeed
$$
\int_0^\infty\int_{-1}^1E(s,h)dhds=\int_0^\infty-2p'(2s)ds=p(0)=1\,,
$$
while the first equality in that formula follows from the identity defining $E$ and Fubini's theorem.

%%%%%%%%%%%%%%%%%%%%%%%%%%%%%%%%%%%%%%%%%%%%%%%%%%%%%%%%%%%%%%%%%%%

\section{Proof of Theorem \ref{T-HThm}}
\label{S-HThm}
%%%%%%%%%%%%%%%%%%%%%%%%%%%%%%%%%%%%%%%%%%%%%%%%%%%%%%%%%%%%%%%%%%%

If $F$ is a solution of (\ref{EqLim}), we set $f=F/E$, so that, observing that $E(0,h')=1$ for $h'\in[-1,1]$,
$$
\ba
E(s,h)(\d_t+\om\cdot\grad_x-\d_s)&f(t,x,\om,s,h)-f(t,x,\om,s,h)\d_sE(s,h)&
\\
&=
\int_{-1}^12P(2s,h|h')f(t,x,R[\theta(h')],0,h')dh'\,,
\ea
$$
Since
$$
\d_sE(s,h)=-\int_{-1}^12P(2s,h|h')dh'\,,
$$
the equation above can be put in the form
$$
\ba
E(s,h)&(\d_t+\om\cdot\grad_x-\d_s)f(t,x,\om,s,h)
\\
&=
\int_{-1}^12P(2s,h|h')\left(f(t,x,R[\theta(h')],0,h')-f(t,x,\om,s,h)\right)dh'\,.
\ea
$$

Next we multiply both sides of the equation above by $\bh'(f)-\bh'(1)$: the left hand side becomes
$$
\ba
(\bh'(f(t,x,\om,s,h))-\bh'(1))E(s,h)(\d_t+\om\cdot\grad_x-\d_s)f(t,x,\om,s,h)
\\
=
(\d_t\!+\!\om\cdot\grad_x\!-\!\d_s)\left(E(s,h)(\bh(f(t,x,\om,s,h))-\bh(1)-\bh'(1)(f(t,x,\om,s,h)-1))\right)
\\
+
\d_sE(s,h)(\bh(f(t,x,\om,s,h))-\bh(1)-\bh'(1)(f(t,x,\om,s,h)-1))
\\
=
(\d_t\!+\!\om\cdot\grad_x\!-\!\d_s)\left(E(s,h)(\bh(f(t,x,\om,s,h))-\bh(1)-\bh'(1)(f(t,x,\om,s,h)-1))\right)
\\
-\int_{-1}^12P(2s,h|h')(\bh(f(t,x,\om,s,h))-\bh(1)-\bh'(1)(f(t,x,\om,s,h)-1))dh'\,.
\ea
$$
Integrating in $(\om,s,h)$ transforms this expression into
$$
\ba
{}&\d_t\int_{\bS^1}\int_0^\infty\int_{-1}^1(\bh(f)-\bh(1)-\bh'(1)(f-1))(t,x,\om,s,h)E(s,h)dhdsd\om
\\
&+
\Div_x\int_{\bS^1}\int_0^\infty\int_{-1}^1\om(\bh(f)-\bh(1)-\bh'(1)(f-1))(t,x,\om,s,h)E(s,h)dhdsd\om
\\
&+\int_{\bS^1}\int_{-1}^1\left(\bh(f)-\bh(1)-\bh'(1)(f-1))\right)(t,x,\om,0,h)dhd\om 
\\
&-\int_{\bS^1}\int_0^\infty\int_{-1}^1\int_{-1}^12P(2s,h|h')(\bh(f)\!-\!\bh(1)\!-\!\bh'(1)(f\!-\!1))(t,x,\om,s,h)dh'dhdsd\om\,.
\ea
$$
Using the relation (\ref{Int-Pdsdh=1}) simplifies this term into
$$
\ba
{}&\d_t\int_{\bS^1}\int_0^\infty\int_{-1}^1(\bh(f)-\bh(1)-\bh'(1)(f-1))(t,x,\om,s,h)E(s,h)dhdsd\om
\\
&+
\Div_x\int_{\bS^1}\int_0^\infty\int_{-1}^1\om(\bh(f)-\bh(1)-\bh'(1)(f-1))(t,x,\om,s,h)E(s,h)dhdsd\om
\\
&+\int_{\bS^1}\int_{-1}^1(\bh(f)-\bh'(1)f)(t,x,\om,0,h')dh'd\om 
\\
&-\int_{\bS^1}\int_0^\infty\int_{-1}^1\int_{-1}^12P(2s,h|h')(\bh(f)-\bh'(1)f)(t,x,\om,s,h)dh'dhdsd\om\,.
\ea
$$

On the other hand, multiplying the right hand side by $\bh'(f(t,x,\om,s,h))-\bh'(1)$ and integrating in $(\om,s,h)$ leads to
$$
\ba
{}&\int_{\bS^1}\int_0^\infty\int_{-1}^1\int_{-1}^12P(2s,h|h')(f(t,x,R[\theta(h')]\om,0,h')-f(t,x,\om,s,h))
\\
&\qquad\qquad\qquad\qquad\qquad\qquad\qquad\qquad\qquad\times(\bh'(f(t,x,\om,s,h))-\bh'(1))dh'dhdsd\om
\\
&=\int_{\bS^1}\int_0^\infty\int_{-1}^1\int_{-1}^12P(2s,h|h')(f(t,x,R[\theta(h')]\om,0,h')-f(t,x,\om,s,h))
\\
&\qquad\qquad\qquad\qquad\qquad\qquad\qquad\qquad\qquad\qquad\qquad\times\bh'(f(t,x,\om,s,h))dh'dhdsd\om
\\
&+\bh'(1)\int_{\bS^1}\int_0^\infty\int_{-1}^1\int_{-1}^12P(2s,h|h')f(t,x,\om,s,h)dh'dhdsd\om
\\
&\qquad\qquad\qquad\qquad\qquad\qquad\qquad\qquad\qquad-\bh'(1)\int_{\bS^1}\int_{-1}^1f(t,x,\om,0,h')dh'd\om\,.
\ea
$$
after substituting $\om$ for $R[\theta(h')]\om$ in the last integral above.

Putting together the left- and right-hand sides, we arrive at the equality
$$
\ba
{}&\d_t\int_{\bS^1}\int_0^\infty\int_{-1}^1(\bh(f)-\bh(1)-\bh'(1)(f-1))(t,x,\om,s,h)E(s,h)dhdsd\om
\\
&+
\Div_x\int_{\bS^1}\int_0^\infty\int_{-1}^1\om(\bh(f)-\bh(1)-\bh'(1)(f-1))(t,x,\om,s,h)E(s,h)dhdsd\om
\\
&+\int_{\bS^1}\int_{-1}^1(\bh(f)-\bh'(1)f)(t,x,\om,0,h')dh'd\om 
\\
&-\int_{\bS^1}\int_0^\infty\int_{-1}^1\int_{-1}^12P(2s,h|h')(\bh(f)-\bh'(1)f)(t,x,\om,s,h)dh'dhdsd\om 
\\
&=\int_{\bS^1}\int_0^\infty\int_{-1}^1\int_{-1}^12P(2s,h|h')(f(t,x,R[\theta(h')]\om,0,h')-f(t,x,\om,s,h))
\\
&\qquad\qquad\qquad\qquad\qquad\qquad\qquad\qquad\qquad\qquad\times\bh'(f(t,x,\om,s,h))dh'dhdsd\om
\\
&+\bh'(1)\int_{\bS^1}\int_0^\infty\int_{-1}^1\int_{-1}^12P(2s,h|h')f(t,x,\om,s,h)dh'dhdsd\om
\\
&\qquad\qquad\qquad\qquad\qquad\qquad\qquad\qquad\qquad-\bh'(1)\int_{\bS^1}\int_{-1}^1f(t,x,\om,0,h')dh'd\om\,.
\ea
$$
All the terms with a factor $\bh'(1)$ in the integral part of the equality above compensate, so that the equality above reduces to
$$
\ba
{}&\d_t\int_{\bS^1}\int_0^\infty\int_{-1}^1(\bh(f)-\bh(1)-\bh'(1)(f-1))(t,x,\om,s,h)E(s,h)dhdsd\om
\\
&+
\Div_x\int_{\bS^1}\int_0^\infty\int_{-1}^1\om(\bh(f)-\bh(1)-\bh'(1)(f-1))(t,x,\om,s,h)E(s,h)dhdsd\om
\\
&+\int_{\bS^1}\int_{-1}^1\bh(f)(t,x,\om,0,h')dh'd\om 
\\
&-\int_{\bS^1}\int_0^\infty\int_{-1}^1\int_{-1}^12P(2s,h|h')\bh(f)(t,x,\om,s,h)dh'dhdsd\om 
\\
&-\int_{\bS^1}\int_0^\infty\int_{-1}^1\int_{-1}^12P(2s,h|h')(f(t,x,R[\theta(h')]\om,0,h')-f(t,x,\om,s,h)
\\
&\qquad\qquad\qquad\qquad\qquad\qquad\qquad\qquad\qquad\qquad\times\bh'(f(t,x,\om,s,h))dh'dhdsd\om=0\,.
\ea
$$

Using again the relation (\ref{Int-Pdsdh=1}) and the substitution $\om\mapsto R[\th(h')]\om$, one has
$$
\ba
\int_{\bS^1}\int_{-1}^1&\bh(f)(t,x,\om,0,h')dh'd\om 
\\
&=
\int_{\bS^1}\int_0^\infty\int_{-1}^1\int_{-1}^12P(2s,h|h')\bh(f)(t,x,\om,0,h')dh'dhdsd\om 
\\
&=\int_{\bS^1}\int_0^\infty\int_{-1}^1\int_{-1}^12P(2s,h|h')\bh(f)(t,x,R[\th(h')]\om,0,h')dh'dhdsd\om 
\ea
$$
so that the previous identity can be put in the form
$$
\ba
{}&\d_t\int_{\bS^1}\int_0^\infty\int_{-1}^1(\bh(f)-\bh(1)-\bh'(1)(f-1))(t,x,\om,s,h)E(s,h)dhdsd\om
\\
&+
\Div_x\int_{\bS^1}\int_0^\infty\int_{-1}^1\om(\bh(f)-\bh(1)-\bh'(1)(f-1))(t,x,\om,s,h)E(s,h)dhdsd\om
\\
&+\int_{\bS^1}\int_0^\infty\int_{-1}^1\int_{-1}^12P(2s,h|h')\Big(\bh(f)(t,x,R[\th(h')]\om,0,h')-\bh(f)(t,x,\om,s,h)
\\
&\qquad-(f(t,x,R[\theta(h')]\om,0,h')-f(t,x,\om,s,h))\bh'(f(t,x,\om,s,h)\Big)dh'dhdsd\om=0\,. 
\ea
$$
Integrating both sides of this identity in $x\in\bT^2$, we finally obtain
$$
\frac{d}{dt}H_\bh(fE|E)+\int_{\bT^2}D_\bh(f)(t,x)dx=0\,,
$$
with $H_\bh(F|E)$ and $D_\bh(f)$ defined as in the statement of Theorem \ref{T-HThm}.

%%%%%%%%%%%%%%%%%%%%%%%%%%%%%%%%%%%%%%%%%%%%%%%%%%%%%%%%%%%%%%%%%%%

\section{Proof of Theorem \ref{T-InstabLocEquil}}
\label{S-InstabLocEquil}
%%%%%%%%%%%%%%%%%%%%%%%%%%%%%%%%%%%%%%%%%%%%%%%%%%%%%%%%%%%%%%%%%%%

Inserting $F(t,x,\om,s,h)=f(t,x,\om)E(s,h)$ in (\ref{EqLim}), one finds that 
$$
\ba
E(s,h)(\d_t+\om\cdot\grad_x)f(t,x,\om)&-f(t,x,\om)\d_sE(s,h)
\\
&=\int_{-1}^12P(2s,h|h')f(t,x,R[\th(h')]\om)dh'
\ea
$$
so that
$$
\ba
E(s,h)(\d_t+\om\cdot\grad_x)&f(t,x,\om)
\\
&=\int_{-1}^12P(2s,h|h')\left(f(t,x,R[\th(h')]\om)-f(t,x,\om)\right)dh'
\ea
$$
since 
$$
-\d_sE(s,h)=\int_{-1}^12P(2s,h|h')dh'\,,\quad s>0\,,\,\,|h|\le 1
$$
--- see Theorem \ref{T-PropEquil}. 

Integrating in $h\in [-1,1]$ both sides of the penultimate equality, we obtain
$$
\ba
\int_{-1}^1E(s,h)dh&(\d_t+\om\cdot\grad_x)f(t,x,\om)
\\
&=\int_{-1}^1\int_{-1}^12P(2s,h|h')\left(f(t,x,R[\th(h')]\om)-f(t,x,\om)\right)dh'\,.
\ea
$$
Since
$$
\int_{-1}^1E(s,h)dh\sim\frac{1}{\pi^2s^2}\quad\hbox{Êas }s\to+\infty
$$
by Theorem \ref{T-PropEquil}, while
$$
\int_{-1}^1\int_{-1}^1P(S,h|h')dhdh'\le\frac{C'}{(1+S)^3}
$$
by (\ref{BoundIntP}), one has
$$
\ba
\int_{-1}^1E(s,h)dh(\d_t+\om\cdot\grad_x)f(t,x,\om)\sim\frac{1}{\pi^2s^2}(\d_t+\om\cdot\grad_x)f(t,x,\om)\,,\,\,\hbox{ and }
\\
\left|\int_{-1}^1\int_{-1}^12P(2s,h|h')\left(f(t,x,R[\th(h')]\om)-f(t,x,\om)\right)dh'dh\right|
\\
\le\frac{4C'}{(1+2s)^3}\|f(t,\cdot,\cdot)\|_{L^\infty(\bT^2\times\bS^1)}\,.
\ea
$$
Hence
$$
\left\{
\ba
{}&(\d_t+\om\cdot\grad_x)f(t,x,\om)=0\,,\,\,\hbox{ and }
\\
&\int_{-1}^1\d_sE(s,h')\left(f(t,x,R[\th(h')]\om)-f(t,x,\om)\right)dh'=0
\ea
\right.
$$
Integrating the second equality in $s>0$, and observing that $E\rstr_{s=0}=1$ while $E(s,h)\to 0$ uniformly in $h\in[-1,1]$ as 
$s\to+\infty$, we see that
$$
\ba
\int_0^\infty&\left(\int_{-1}^1\d_sE(s,h')\left(f(t,x,R[\th(h')]\om)-f(t,x,\om)\right)dh'\right)ds
\\
&=
\int_{-1}^1\left(\int_0^\infty\d_sE(s,h')ds\right)\left(f(t,x,R[\th(h')]\om)-f(t,x,\om)\right)ds
\\
&=
\int_{-1}^1\left(f(t,x,R[\th(h')]\om)-f(t,x,\om)\right)dh'
=0
\ea
$$
or, in other words
$$
f(t,x,\om)=\tfrac12\int_{-1}^1f(t,x,R[\th(h')]\om)dh'=:\phi(t,x)\,.
$$
On the other hand, the first equation implies that
$$
(\d_t+\om\cdot\grad_x)\phi(t,x)=0
$$
so that
$$
\d_t\phi=\d_{x_j}\phi=0\,,\quad j=1,2\,.
$$
Hence $\phi$ is a constant, which implies in turn that $f(t,x,\om)=\frac{F(t,x,\om,s,h)}{E(s,h)}$ is a constant.

%%%%%%%%%%%%%%%%%%%%%%%%%%%%%%%%%%%%%%%%%%%%%%%%%%%%%%%%%%%%%%%%%%%

\section{Proof of Theorem \ref{T-LongTime}}
\label{S-LongTime}
%%%%%%%%%%%%%%%%%%%%%%%%%%%%%%%%%%%%%%%%%%%%%%%%%%%%%%%%%%%%%%%%%%%

Let $f^{in}\in L^\infty(\bT^2\times\bS^1)$ be such that $f^{in}(x,\om)\ge 0$ a.e. in $(x,\om)\in\bT^2\times\bS^1$, and let $F$ be the
solution of the Cauchy problem (\ref{EqLim}). Define $F_0(t,x,\om,s,h)=f^{in}(x,\om)E(s,h)$ and set $K_jF_0=F_j$ for $j\in\bN$,
where we recall that $(K_t)_{t\ge 0}$ is the evolution semigroup associated to the Cauchy problem (\ref{EqLim}).

\smallskip
\noindent
\underline{Step 1:}

Assume first that $\grad^m_xf^{in}\in L^\infty(\bT^2\times\bS^1)$ for all $m\ge 0$, so that $F$ solves (\ref{EqLim}) in the classical
sense. Then, with $\bh(z)=\tfrac12z^2$, one has
$$
H(F_{n+1}|E)+\sum_{j=0}^n\left(H(F_j|E)-H(F_{j+1}|E)\right)=H(F_0|E)
$$
for each $n\ge 0$, and since
$$
H(F_j|E)-H(F_{j+1}|E)=H(F_j|E)-H(K_1F_j|E)\ge 0
$$
by Theorem \ref{T-HThm}, one has
$$
H(F_j|E)-H(F_{j+1}|E)\to 0\hbox{ as }j\to+\infty\,.
$$

Now, 
$$
\ba
{}&H(F_j|E)-H(F_{j+1}|E)=\int_0^1\int_{\bT^2}D(K_tF_j/E)dxdt
\\
&=\int_0^1\int_{\bT^2}\int_{\bS^1}\int_0^\infty\iint_{[-1,1]^2}P(2s,h|h')\Phi_j(t,x,\om,s,h,h')^2dhdh'dsd\om dxdt
\ea
$$
with the notation
$$
\Phi_j(t,x,\om,s,h,h')=\frac{K_tF_j(x,\om,s,h)}{E(s,h)}-K_tF_j(x,R[\th(h')]\om,0,h')
$$
for a.e. $(t,x,\om,s,h,h')\in[0,1]\times\bT^2\times\bS^1\times\bR_+\times[-1,1]^2$.

In view of properties 1) and 3) of the evolution semigroup $(K_t)_{t\ge 0}$ recalled in section \ref{S-PropLimit}, one has
$$
0\le\frac{K_tF_j(x,\om,s,h)}{E(s,h)}\le\|f^{in}\|_{L^\infty(\bT^2\times\bS^1)}
$$
for a.e. $(x,\om,s,h)\in\bT^2\times\bS^1\times\bR_+\times[-1,1]$ and each $t\ge 0$, so that, up to extraction of a subsequence,
$$
F_{j_k}/E\wto F/E\hbox{ in }L^\infty(\bT^2\times\bS^1\times\bR_+\times[-1,1])\hbox{ weak-*}
$$
as $j_k\to+\infty$, by the Banach-Alaoglu theorem. Besides, the estimate (\ref{BoundP}) implies that
$$
0\le(\d_t+\om\cdot\grad_x-\d_s)K_tF_{j_k}\le\frac{4C''}{1+2s}\|f^{in}\|_{L^\infty(\bT^2\times\bS^1)}
$$
so that, by the usual trace theorem for the advection operator $\d_t+\om\cdot\grad_x-\d_s$ (see for instance \cite{Cessenat1984})
$$
K_tF_{j_k}(x,R[\th(h')]\om,0,h')\wto K_tF(x,R[\th(h')]\om,0,h')
$$
in $L^\infty([0,1]\times\bT^2\times\bS^1\times[-1,1])$  weak-*. In particular
$$
\Phi_{j_k}\wto\Phi\hbox{ in }L^\infty([0,1]\times\bT^2\times\bS^1\times\bR_+\times[-1,1]^2)\hbox{ weak-*}
$$
with
$$
\Phi(t,x,\om,s,h,h')=\frac{K_tF(x,\om,s,h)}{E(s,h)}-K_tF(x,R[\th(h')]\om,0,h')
$$
a.e. in $[0,1]\times\bT^2\times\bS^1\times\bR_+\times[-1,1]^2$. By convexity and weak limit
$$
\ba
\int_0^1\int_{\bT^2}\int_{\bS^1}\int_0^\infty\int_{-1}^1\int_{-1}^1P(2s,h|h')\Phi(t,x,\om,s,h,h')^2dh'dhdsd\om dxdt
\\
=
\int_0^1\int_{\bT^2}D(K_tF/E)dxdt\le\lim_{k\to\infty}\int_0^1\int_{\bT^2}D(K_tF_{j_k}/E)dxdt=0\,,
\ea
$$
so that
$$
\Phi(t,x,\om,s,h,h')=\frac{K_tF(x,\om,s,h)}{E(s,h)}-K_tF(x,R[\th(h')]\om,0,h')=0
$$
a.e. on $[0,1]\times\bT^2\times\bS^1\times\bR_+\times[-1,1]^2$. Averaging both sides of this identity in $h'\in[-1,1]$ shows that
$$
\frac{K_tF(x,\om,s,h)}{E(s,h)}=\tfrac12\int_{-1}^1K_tF(x,R[\th(h')]\om,0,h')dh'=:f(t,x,\om)
$$
for a.e. $[0,\tau]\times\bT^2\times\bS^1\times\bR_+\times[-1,1]$, i.e.
$$
K_tF(x,\om,s,h)=f(t,x,\om)E(s,h)\,.
$$
By Theorem \ref{T-InstabLocEquil}, one has $f(t,x,\om)=C$ a.e. in $(t,x,\om)\in[0,1]\times\bT^2\times\bS^1$ for some constant 
$C\ge 0$, so that 
$$
F(x,\om,s,h)=CE(s,h)\hbox{ a.e. in }(x,\om,s,h)\in\bT^2\times\bS^1\times\bR_+\times[-1,1]\,.
$$

Let us identify the constant $C$. Property 4) of the semigroup $(K_t)_{t\ge 0}$ recalled in section \ref{S-PropLimit} implies that
$$
\ba
\iint_{\bT^2\times\bS^1}&\int_0^\infty\int_{-1}^1F_j(x,\om,s,h)dhdsdxd\om 
\\
&=\iint_{\bT^2\times\bS^1}f^{in}(x,\om)dxd\om\int_0^\infty\!\!\int_{-1}^1E(s,h)dhds=\!\iint_{\bT^2\times\bS^1}f^{in}(x,\om)dxd\om\,.
\ea
$$
Since $F_{j_k}/E\wto F/E$ in $L^\infty(\bT^2\times\bS^1\times\bR_+\times[-1,1])$ weak-* as $j_k\to+\infty$, 
$$
\ba
\iint_{\bT^2\times\bS^1}f^{in}(x,\om)dxd\om=\iint_{\bT^2\times\bS^1}\int_0^\infty\int_{-1}^1F(x,\om,s,h)dhdsdxd\om&
\\
=\iint_{\bT^2\times\bS^1}\int_0^\infty\int_{-1}^1CE(s,h)dhdsdxd\om=2\pi C&\,.
\ea
$$
Hence 
$$
F_{j_k}/E\wto C=\tfrac1{2\pi}\iint_{\bT^2\times\bS^1}f^{in}(x,\om)dxd\om
$$
in $L^\infty(\bT^2\times\bS^1\times\bR_+\times[-1,1])$ weak-*.

Since the sequence $(F_j/E)_{j\ge 0}$ is relatively compact in $L^\infty(\bT^2\times\bS^1\times\bR_+\times[-1,1])$ weak-* and by 
the uniqueness of the limit point as $j\to+\infty$, we conclude that
$$
F_j/E\wto C=\tfrac1{2\pi}\iint_{\bT^2\times\bS^1}f^{in}(x,\om)dxd\om
$$
in $L^\infty(\bT^2\times\bS^1\times\bR_+\times[-1,1])$ weak-* as $j\to+\infty$. Thus, we have proved that
$$
K_jF_0\wto CE\hbox{ in }L^\infty(\bT^2\times\bS^1\times\bR_+\times[-1,1])\hbox{ weak-*}
$$
where
$$
\ba
C&=\tfrac1{2\pi}\iint_{\bT^2\times\bS^1}f^{in}(x,\om)dxd\om
\\
&=\tfrac1{2\pi}\iint_{\bT^2\times\bS^1}\int_0^\infty\int_{-1}^1F_0(x,\om,s,h)dhdsdxd\om\,,
\ea
$$
whenever $\grad^m_xf^{in}\in L^\infty(\bT^2\times\bS^1)$ for each $m\ge 0$, with $f^{in}\ge 0$ a.e. on $\bT^2\times\bS^1$. 

\smallskip
\noindent
\underline{Step2:}

The same holds true if $0\le f^{in}\in L^\infty(\bT^2\times\bS^1)$ without assuming that $\grad^m_xf^{in}\in L^\infty(\bT^2\times\bS^1)$ 
for each $m\ge 1$, by regularizing the initial data in the $x$-variable. 

Indeed, if $(\zeta_\eps)_{\eps>0}$ is a regularizing sequence in $\bT^2$ such that $\zeta_\eps(-z)=\zeta_\eps(z)$ for each $z\in\bT^2$, 
one has
$$
\ba
\iint_{\bT^2\times\bS^1}\int_0^\infty\int_{-1}^1(K_t(\zeta_\eps\star_xF_0)-K_tF_0)(x,\om,s,h)\phi(x,\om,s,h)dhdsdxd\om
\\
=
\iint_{\bT^2\times\bS^1}\int_0^\infty\int_{-1}^1K_t(F_0)(x,\om,s,h)(\zeta_\eps\star_x\phi-\phi)(x,\om,s,h)dhdsdxd\om
\ea
$$
because $K_t$ commutes with translations in the variable $x$. Since 
$$
\|K_tF_0/E\|_{L^\infty(\bT^2\times\bS^1\times\bR_+\times[-1,1])}=\|F_0/E\|_{L^\infty(\bT^2\times\bS^1\times\bR_+\times[-1,1])}
$$
for all $t\ge 0$, and 
$$
\zeta_\eps\star_x\phi-\phi\to 0\,\hbox{Êin }L^1(\bT^2\times\bS^1\times\bR_+\times[-1,1])\,,
$$
we conclude that
$$
K_t(\zeta_\eps\star_xF_0)-K_tF_0\wto 0\hbox{ in }L^\infty(\bT^2\times\bS^1\times\bR_+\times[-1,1])\hbox{ weak-*}
$$
as $\eps\to 0^+$ uniformly in $t\ge 0$. Since we have established in Step 1 that
$$
K_j(\zeta_\eps\star_xF_0)\wto\tfrac1{2\pi}\iint_{\bT^2\times\bS^1}f^{in}(x,\om)dxd\om
$$
in $L^\infty(\bT^2\times\bS^1\times\bR_+\times[-1,1])$ weak-* for each $\eps>0$, we conclude by a classical double limit argument that
$$
K_j(F_0)\wto CE \hbox{ with }C=\tfrac1{2\pi}\iint_{\bT^2\times\bS^1}f^{in}(x,\om)dxd\om
$$
in $L^\infty(\bT^2\times\bS^1\times\bR_+\times[-1,1])$ weak-* as $j\to+\infty$.

Summarizing, we have proved that
$$
K_jF_0\wto\tfrac1{2\pi}\left(\iint_{\bT^2\times\bS^1}\int_0^\infty\int_{-1}^1F_0(x,\om,s,h)dhdsdxd\om\right) E
$$
in $L^\infty(\bT^2\times\bS^1\times\bR_+\times[-1,1])$ weak-* as $j\to+\infty$. Replacing $F_0$ with $K_tF_0$ for each $t\in[0,1]$
and noticing that
$$
\ba
\iint_{\bT^2\times\bS^1}\int_0^\infty\int_{-1}^1&K_tF_0(x,\om,s,h)dhdsdxd\om
\\
&=
\iint_{\bT^2\times\bS^1}\int_0^\infty\int_{-1}^1F_0(x,\om,s,h)dhdsdxd\om
\ea
$$
we conclude that
$$
K_tF_0\wto\tfrac1{2\pi}\left(\iint_{\bT^2\times\bS^1}\int_0^\infty\int_{-1}^1F_0(x,\om,s,h)dhdsdxd\om\right) E
$$
in $L^\infty(\bT^2\times\bS^1\times\bR_+\times[-1,1])$ weak-* as $t\to+\infty$. 

%%%%%%%%%%%%%%%%%%%%%%%%%%%%%%%%%%%%%%%%%%%%%%%%%%%%%%%%%%%%%%%%%%%

\section{Proof of Theorem \ref{T-NoSpecGap}}

%%%%%%%%%%%%%%%%%%%%%%%%%%%%%%%%%%%%%%%%%%%%%%%%%%%%%%%%%%%%%%%%%%%

The argument used in the proof is reminiscent of the one used in \cite{Golse2003,Golse2008}.

Assume the existence of a profile $\Phi(t)$ such that the estimate (\ref{ExpEst}) holds. Therefore, for each initial data 
$f^{in}\in L^2(\bT^2\times\bS^1)$, the solution 
$$
F(t,\cdot,\cdot,\cdot,\cdot)=K_t(f^{in}(\cdot,\cdot)E(\cdot,\cdot))
$$ 
of the Cauchy problem satisfies
$$
\ba
\|F(t,\cdot,\cdot,\cdot,\cdot)\|_{L^2(\bT^2\times\bS^1\times\bR_+\times[-1,1])}
&\le
\|\la f^{in}\ra E\|_{L^2(\bT^2\times\bS^1\times\bR_+\times[-1,1])}
\\
&+\Phi(t)\|F(0,\cdot,\cdot,\cdot,\cdot)\|_{L^2(\bT^2\times\bS^1\times\bR_+\times[-1,1])}
\\
&=\la f^{in}\ra\sqrt{2\pi}\|E\|_{L^2(\bR_+\times[-1,1])}
\\
&+\Phi(t)\|f^{in}\|_{L^2(\bT^2\times\bS^1)}\|E\|_{L^2(\bR_+\times[-1,1])}\,.
\ea
$$
Assume $f^{in}\ge 0$ a.e. on $\bT^2\times\bS^1$; then $F\ge 0$ a.e. on $\bR_+\times\bT^2\times\bS^1\times\bR_+\times[-1,1]$, so 
that the right-hand side of the equation satisfied by $F$ is a.e. nonnegative on $\bR_+\times\bT^2\times\bS^1\times\bR_+\times[-1,1]$.
Thus $F\ge G$ a.e. on $\bR_+\times\bT^2\times\bS^1\times\bR_+\times[-1,1]$, where $G$ is the solution of the Cauchy problem
$$
\left\{
\ba
{}&(\d_t+\om\cdot\grad_x-\d_s)G(t,x,\omega,s,h)=0\,,
\\
&G(0,x,\om,s,h)=f^{in}(x,\om)E(s,h)\,.
\ea
\right.
$$
The Cauchy problem above can be solved by the method of characteristics, which leads to the explicit formula
$$
G(t,x,\om,s,h)=f^{in}(x-t\om,\om)E(s+t,h)\,,
$$
for a.e. $(t,x,\om,s,h)\in\bR_+\times\bT^2\times\bS^1\times\bR_+\times[-1,1]$. Thus
$$
\ba
\|F(t,\cdot,\cdot,\cdot,\cdot)\|&_{L^2(\bT^2\times\bS^1\times\bR_+\times[-1,1])}
\\
&\ge
\|G(t,\cdot,\cdot,\cdot,\cdot)\|_{L^2(\bT^2\times\bS^1\times\bR_+\times[-1,1])}
\\
&=
\|f^{in}\|_{L^2(\bT^2\times\bS^1)}\left(\int_t^\infty\int_{-1}^1E(s,h)^2dhds\right)^{1/2}
\\
&\ge
\|f^{in}\|_{L^2(\bT^2\times\bS^1)}\tfrac1{\sqrt{2}}\left(\int_t^\infty\left(\int_{-1}^1E(s,h)dh\right)^2ds\right)^{1/2}\,.
\ea
$$

Therefore, if there exist a profile $\Phi(t)$ satisfying the estimate in the statement of the theorem, one has
$$
\ba
\left(\int_t^\infty\left(\int_{-1}^1E(s,h)dh\right)^2ds\right)^{1/2}
\le\left(\frac{\sqrt{2\pi}\la f^{in}\ra}{\|f^{in}\|_{L^2(\bT^2\times\bS^1)}}+\Phi(t)\right)\sqrt{2}\|E\|_{L^2(\bR_+\times[-1,1])}
\ea
$$
for each $t>0$, and for each $f^{in}\in L^2(\bT^2\times\bS^1)$ s.t. $f^{in}\ge 0$ a.e. on $\bT^2\times\bS^1$.

Let $\rho\in C(\bR^2)$ such that 
$$
\rho\ge 0\,,\quad\Supp(\rho)\subset(-\tfrac14,\tfrac14)^2\,,
$$
and set
$$
\rho_\eps(x):=\rho\left(\frac{x}{\eps}\right)
$$
for each $\eps\in (0,1)$. Define $\tilde\rho_\eps$ as the periodicized bump function 
$$
\tilde\rho_\eps(x):=\sum_{k\in\bZ^2}\rho_\eps(x+k)\,.
$$
Clearly
$$
\ba
{}&\|\tilde\rho_\eps\|_{L^1(\bT^2)}=\|\rho_\eps\|_{L^1(\bR^2)}=\eps^2\|\rho\|_{L^1(\bR^2)}\,,
\\
&\|\tilde\rho_\eps\|_{L^2(\bT^2)}=\|\rho_\eps\|_{L^2(\bR^2)}=\eps\|\rho\|_{L^2(\bR^2)}\,.
\ea
$$
Choosing $f^{in}(x,\om):=\rho_\eps(x)$ in the inequality above leads to
$$
\ba
\left(\int_t^\infty\left(\int_{-1}^1E(s,h)dh\right)^2ds\right)^{1/2}
\le\left(\eps\frac{\|\rho\|_{L^1(\bR^2)}}{\|\rho\|_{L^2(\bR^2)}}+\Phi(t)\right)\sqrt{2}\|E\|_{L^2(\bR_+\times[-1,1])}
\ea
$$
and, letting $\eps\to 0^+$, we conclude that
$$
\left(\int_t^\infty\left(\int_{-1}^1E(s,h)dh\right)^2ds\right)^{1/2}\le\sqrt{2}\Phi(t)\|E\|_{L^2(\bR_+\times[-1,1])}\,.
$$
That $\Phi(t)=o(t^{-3/2})$ as $t\to+\infty$ is in contradiction with statement 3) in Theorem \ref{T-PropEquil}, which implies that 
$$
\left(\int_t^\infty\left(\int_{-1}^1E(s,h)dh\right)^2ds\right)^{1/2}\sim\frac1{\sqrt{3}\pi^2}\frac1{t^{3/2}}
$$
as $t\to+\infty$, while statement 1) in the same theorem implies that
$$
\|E\|_{L^2(\bR_+\times[-1,1])}^2\le \int_0^\infty\int_{-1}^1E(s,h)E(0,h)dhds=\int_0^\infty\int_{-1}^1E(s,h)dhds=1\,.
$$
 
%%%%%%%%%%%%%%%%%%%%%%%%%%%%%%%%%%%%%%%%%%%%%%%%%%%%%%%%%%%%%%%%%%%

\end{document}